\documentclass[11pt]{article}

\usepackage[T1]{fontenc}
\usepackage{amsmath,amsthm,mathtools}
\usepackage{libertine}
\usepackage[libertine]{newtxmath}
\usepackage{microtype}
\usepackage[letterpaper,margin=1in]{geometry}
\usepackage[colorlinks=true,linkcolor=blue,citecolor=blue,urlcolor=blue]{hyperref}

\usepackage{enumerate}

\usepackage{booktabs}
\usepackage{array}

\usepackage{float}

\usepackage[numbers]{natbib}
\theoremstyle{plain}
\newtheorem{theorem}{Theorem}[section]
\newtheorem{lemma}[theorem]{Lemma}
\newtheorem{proposition}[theorem]{Proposition}
\newtheorem{corollary}[theorem]{Corollary}

\theoremstyle{definition}
\newtheorem{definition}[theorem]{Definition}
\newtheorem{example}[theorem]{Example}

\theoremstyle{remark}
\newtheorem{remark}[theorem]{Remark}

\DeclareMathOperator{\Var}{Var}

\DeclareMathOperator{\Exp}{Exp}
\DeclareMathOperator{\Geom}{Geom}
\DeclareMathOperator{\CV}{CV}
\newcommand{\dK}{d_{\mathrm{K}}}

\title{On Completion Times under Memoryless Catastrophe}

\author{%
  Sichen Wang\\
  \small Shenzhen MSU-BIT University\\
  \small\texttt{wsc@smbu.edu.cn}
  \and
  Zhipeng Lu\\
  \small Shenzhen MSU-BIT University\\
  \small\texttt{zhipeng.lu@smbu.edu.cn}
}
\date{}

\hypersetup{%
  pdftitle={On Completion Times under Memoryless Catastrophe},
  pdfauthor={Sichen Wang, Zhipeng Lu}}

\begin{document}
\maketitle

\begin{abstract}
We study the completion time of a task subject to independent reset (catastrophe) at each step. The completion-time PGF depends on the base-process PGF through an affine relation, and we exploit this structure systematically. Our main result shows that, among age-based catastrophe mechanisms, geometric-tail catastrophe is exactly the class that yields uniform affine PGF structure; in continuous time, the characterization sharpens to Poisson resetting. We establish a sharp two-sided Kolmogorov bound of order $p+|\alpha|$ for the exponential approximation $\dK(T/E[T],\Exp(1))$, thereby closing a logarithmic gap. Applications to the coupon collector with reset coupons reveal a discontinuous Gumbel-to-Exponential transition under resetting, while a multi-phase model exhibits a Gaussian-to-exponential transition with exponential convergence rate.

\medskip
\noindent\textbf{Keywords:} memoryless catastrophe; exponential approximation; derivative reduction principle; coupon collector problem; Kolmogorov distance; M\"{o}bius rigidity.
\end{abstract}

\section{Introduction}\label{sec:intro}

\subsection{Motivation}

Stochastic processes subject to catastrophe undergo the total loss of accumulated progress at random intervals, necessitating a complete restart from the initial state. This abstraction of a base task disrupted by total resets arises across several disciplines:
\begin{itemize}
\item Randomized algorithms restarted upon timeout, where~\cite{LSZ93} showed that a universal restart strategy is near-optimal up to a logarithmic factor;
\item \emph{Queueing systems} flushed by disasters, initiated by the disaster models of~\cite{BGR82} and developed through the negative-customer framework of~\cite{Gelenbe91};
\item \emph{Diffusive search} under stochastic resetting, where~\cite{EM11} demonstrated that Poisson resetting of a Brownian particle produces a nonequilibrium steady state with finite mean first-passage time---launching a now-extensive literature surveyed in~\cite{EMS20}.
\end{itemize}

For such a process that must complete a task, its completion time, denoted by $T$, depends on two ingredients: the base task (how long without catastrophe?) and the catastrophe mechanism (when does reset strike?).  Much of the existing literature studies the first ingredient---optimizing restart timing for a given task~\cite{Reuveni16, PalReuveni17, CS18}.  We ask a complementary question about the second:

\begin{quote}
\emph{What algebraic structure does the catastrophe mechanism impose on the probability generating function of $T$, and what are the consequences for moments, limit laws, and convergence rates?}
\end{quote}

The question has a clear theoretical lineage. The probability generating function (PGF) formula for completion under geometric restart was derived independently by~\cite{BP21} and~\cite{FP21}.  Both observed the striking fact that $E[T]$ depends on the base process only through a single PGF evaluation---Flynn and Pilyugin called this ``wonderful.''  This is, in retrospect, the $k=1$ shadow of a deeper phenomenon.  Meanwhile, in branching process theory,~\cite{Agresti74} and~\cite{Sagitov13} exploited the fact that geometric compounding inherently produces linear fractional (M\"obius) PGFs; in the catastrophe/restart context, this connection has not been made. Our work identifies the algebraic mechanism underlying these observations---\emph{affine PGF structure}---and develops it into a systematic theory: a derivative reduction principle, a characterization theorem, an exponential limit with explicit Laplace expansion, sharp two-sided convergence rates, and a continuous-time counterpart that sharpens the characterization to full Poisson resetting. 


\subsection{Setup and the PGF Formula}

Throughout, $T_0$ is a positive-integer-valued, almost surely finite random variable (the base completion time). Catastrophe occurs independently at each step with probability $q \in (0,1)$.  
For an integer-valued random variable $X$, write $G_X$ for its probability generating function.  
We set
\[
  s = 1-q, \qquad g = G_{T_0}, \qquad p = g(s), \qquad D_k = g^{(k)}(s).
\]
We write $\Exp(\lambda)$ for the exponential distribution with rate $\lambda > 0$ (density $\lambda e^{-\lambda x}$, $x \geq 0$; mean $1/\lambda$).  In particular, $\Exp(1)$ denotes the standard exponential (mean $1$). The attempt decomposition
\begin{equation}\label{eq:attempt}
  T = \sum_{i=1}^{N} A_i + B,
\end{equation}
with $N \sim \mathrm{Geom}_0(p)$ counting failed attempts, $A_i$ i.i.d.\ failed-attempt lengths, and $B$ the successful-attempt length, is classical (see \S\ref{sec:attempt} for a self-contained derivation).  
The success probability $p = g(s)$, a single PGF evaluation, is the simplest instance of what we call \emph{proportional single-point sufficiency}.

The PGF of $T$ then takes the form (Theorem~\ref{thm:pgf})
\begin{equation}\label{eq:pgf-intro}
  G_T(w) \;=\; \frac{g(u)(1-u)}{1 - w + qw\,g(u)}, \qquad u = ws.
\end{equation}
This formula was obtained independently by~\cite{FP21} and~\cite{BP21}.  Our contribution begins with the observation that~\eqref{eq:pgf-intro} is \emph{affine in $g(u)$}:
\begin{equation}\label{eq:affine-intro}
  \underbrace{(1-u)}_{\alpha_N(w)} \cdot g(u) + \underbrace{0}_{\beta_N(w)}
  \quad\Big/\quad
  \underbrace{qw}_{\alpha_D(w)} \cdot g(u) + \underbrace{(1-w)}_{\beta_D(w)},
\end{equation}
where the argument $u = ws$ is \emph{linear} in $w$.  Since $\beta_N(1) = 0 = \beta_D(1)$, the probability axiom $G_T(1)=1$ forces
\begin{equation}\label{eq:coeff-match-intro}
  \alpha_N(1) = \alpha_D(1) = q.
\end{equation}

This coefficient equality is the organizing algebraic mechanism behind the derivative reduction principle and the subsequent approximation theory.

\subsection{Main Results}

We state our principal results in order: the coefficient equality~\eqref{eq:coeff-match-intro} drives the derivative reduction principle (Theorem~\ref{thm:A}), which in turn underpins the Laplace expansion (Theorem~\ref{thm:C}) and the sharp rate (Theorem~\ref{thm:D}).  The characterization (Theorem~\ref{thm:B}) closes the loop by showing that affine structure is not merely sufficient but also \emph{necessary} within the class of age-based catastrophe mechanisms.

\medskip
\noindent\textbf{I.\ Derivative reduction principle.}
The coefficient equality~\eqref{eq:coeff-match-intro} propagates through the Leibniz rule to produce an exact cancellation at every moment level.

\begin{theorem}[Derivative reduction principle]\label{thm:A}
Define $\Delta_k = N^{(k)}(1) - D^{(k)}(1)$, where $N$ and $D$ are the numerator and denominator of~\eqref{eq:pgf-intro}.  Then
\[
  \Delta_k =
  \begin{cases}
    1-p, & k=1, \\
    -k\,s^{k-1}\,D_{k-1}, & k \geq 2.
  \end{cases}
\]
In particular, $\Delta_k$ does not contain $D_k = g^{(k)}(s)$.  Consequently, the $k$-th factorial moment of $T$ depends on the $(k{-}1)$-jet $\{g(s), g'(s), \ldots, g^{(k-1)}(s)\}$, never on $g^{(k)}(s)$.
\end{theorem}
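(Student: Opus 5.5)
The plan is a direct Leibniz-rule computation on the two factors of~\eqref{eq:pgf-intro}, followed by an induction on the identity $G_T\cdot D = N$ for the moment consequence. Write $N(w) = g(ws)(1-ws)$ and $D(w) = 1 - w + qw\,g(ws)$. The decisive structural fact is that each factor multiplying $g(ws)$ is affine in $w$. Hence in the Leibniz expansion only two terms survive: the one with all $k$ derivatives on $g(ws)$, and the one with exactly one derivative on the affine factor. The chain rule gives $\frac{d^j}{dw^j}g(ws) = s^j g^{(j)}(ws)$.

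First I will compute $N^{(k)}(1)$. The factor $(1-ws)$ equals $q$ at $w=1$, and its derivative is $-s$. This gives
\[
N^{(k)}(1) = q\,s^k D_k - k\,s^{k}D_{k-1}.
\]
Next I will compute $D^{(k)}(1)$. For the term $qw\,g(ws)$, the factor $w$ equals $1$ at $w=1$ and has derivative $1$. This gives $q\,s^k D_k + k q\,s^{k-1}D_{k-1}$. The term $1-w$ contributes an extra $-1$ only when $k=1$.

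Subtracting, the $D_k$ terms cancel because their coefficients are both $q s^k$. This is exactly the coefficient equality~\eqref{eq:coeff-match-intro} in action. For $k\ge 2$ what remains is $-k s^{k-1}D_{k-1}(s+q) = -k s^{k-1}D_{k-1}$, since $s+q=1$. For $k=1$ we get $\Delta_1 = -sp - qp + 1 = 1-p$, using $D_0 = p$.

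For the consequence, I will differentiate $G_T(w)D(w) = N(w)$ $k$ times and evaluate at $w=1$. Since $G_T(1)=1$, the $j=0$ term of the Leibniz sum is $D^{(k)}(1)$. Moving it to the right-hand side yields
\[
G_T^{(k)}(1)\,D(1) = \Delta_k - \sum_{j=1}^{k-1}\binom{k}{j}G_T^{(j)}(1)\,D^{(k-j)}(1).
\]
Here $D(1) = qp > 0$. Indeed, $p = g(s) > 0$ because $s\in(0,1)$ and $T_0$ is a.s. finite with positive integer values.

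Now I argue by strong induction on $k$. Each $D^{(k-j)}(1)$ with $j\ge 1$ involves only $D_0,\dots,D_{k-j} \subseteq \{D_0,\dots,D_{k-1}\}$. Each lower factorial moment $G_T^{(j)}(1)$ with $j<k$ depends only on the $(j-1)$-jet, by the induction hypothesis. Finally, $\Delta_k$ depends only on $D_{k-1}$, or on $p$ when $k=1$. Therefore the $k$-th factorial moment depends only on the $(k-1)$-jet.

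There is no genuine obstacle here. The only points needing care are the bookkeeping of the Leibniz expansion (that exactly two terms survive in each factor) and confirming $D(1)\neq 0$ so that the division is legitimate.
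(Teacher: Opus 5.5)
Your proposal is correct and matches the paper's proof: Leibniz computation of $N^{(k)}(1)$ and $D^{(k)}(1)$, cancellation of the $D_k$-terms through the common coefficient $q\,s^k$, and collapse of the remainder via $s+q=1$. You then differentiate $D\cdot G_T=N$ at $w=1$ and use strong induction, which is exactly the paper's moment recursion (Corollary~\ref{cor:moment-recursion}). One point that both you and the paper leave implicit is worth stating: $g(ws)$ is analytic for $|w|<1/s$ and $D(1)=qp\neq 0$, so $G_T=N/D$ is analytic near $w=1$ and its derivatives there really are the factorial moments.
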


For details, see Theorem~\ref{thm:drp} and its proof. The theorem reveals a \emph{strict complexity staircase}: each moment level requires exactly one new derivative of the base PGF. For base processes with unbounded support, the staircase is tight---no further reduction is possible (Proposition~\ref{prop:tight}). In the coupon collector application (\S\ref{sec:ccp-app}), this staircase is realized by the finite harmonic sums $H^{(k-1)}=\sum_{j=m+1}^{n+m} j^{-(k-1)}$, so that the $k$-th moment introduces one new harmonic statistic that cannot be eliminated from the present recursion using lower-order data.

The cancellation rests on affine dependence on $g(u)$, a linear shift $u=ws$, and coefficient matching at the normalization point; see Lemma~\ref{lem:affine-ratio-rec}.  This raises a natural question: is their conjunction \emph{characteristic} of some identifiable class of catastrophe mechanisms?

\medskip
\noindent\textbf{II.\ Characterization of geometric-tail catastrophe.}
The answer is yes.  Consider a general age-based catastrophe mechanism with hazard sequence $(h_t)_{t \geq 1}$, $h_t \in (0,1)$, and survival function $S(t) = \prod_{i=1}^{t}(1-h_i)$.  We say the mechanism has \emph{geometric tail} if there exist $a \in (0,1)$ and $q \in (0,1)$ such that $h_1 = 1-a$ and $h_t = q$ for all $t \geq 2$; equivalently, $S(t) = a\,s^{t-1}$ for $t \geq 1$.  The special case $a = s$ recovers full memorylessness.

\begin{theorem}[Characterization]\label{thm:B}
Among age-based catastrophe mechanisms with hazard rates $h_t \in (0,1)$, the following are equivalent:
\begin{enumerate}[\rm (A)]
\item Geometric-tail catastrophe: $S(t) = a\,s^{t-1}$ for $t \geq 1$.
\item Uniform affine PGF structure (Definition~\ref{def:affine}).
\item Derivative reduction: there exists $z_0 \in (0,1)$ such that for every $k \geq 1$ and every finitely-supported~$T_0$, the $k$-th factorial moment of $T$ depends on $T_0$ only through $\{g^{(j)}(z_0)\}_{j=0}^{k-1}$.
\item Proportional single-point sufficiency: $p = \lambda\, g(z_0)$ for some $z_0 \in (0,1)$, $\lambda > 0$, and all finitely-supported~$T_0$.
\end{enumerate}
Full memorylessness ($h_t \equiv q$) is further equivalent to $\lambda = 1$ (Corollary~\ref{cor:full-memoryless}).
\end{theorem}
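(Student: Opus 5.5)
The proof goes around the cycle (D)$\Rightarrow$(A)$\Rightarrow$(B)$\Rightarrow$(C)$\Rightarrow$(D). Every step uses the same basic observation. For a general age-based mechanism, the success probability of one attempt is the linear functional
\[
p(f)=\sum_{t\ge1} f_t\,S(t), \qquad f_t=P(T_0=t).
\]
The attempt decomposition~\eqref{eq:attempt} remains valid for any renewal-type catastrophe mechanism, because the age is reset at each catastrophe. Finitely supported $T_0$ include the point masses $\delta_t$, so any identity that is linear in $f$ can be tested one atom at a time.

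\emph{(D)$\Rightarrow$(A).} Apply the hypothesis $p=\lambda g(z_0)$ to $T_0=\delta_t$. This gives $S(t)=\lambda z_0^{t}$ for every $t\ge1$. Hence $S(t+1)/S(t)=z_0$ for $t\ge1$, so $h_t=1-z_0$ for all $t\ge2$. Setting $s=z_0$ and $a=S(1)=\lambda z_0\in(0,1)$ gives $S(t)=a s^{t-1}$. For the final assertion, note that $\lambda=1$ is equivalent to $a=s$, i.e.\ $h_1=q$, which is full memorylessness.

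\emph{(A)$\Rightarrow$(B).} I redo the derivation of Theorem~\ref{thm:pgf} with hazard $1-a$ at age $1$ and $q$ afterwards. Summing the geometric series over failed attempts, the failed-attempt PGF and the success PGF are both affine in $g(u)$ with $u=ws$; the factor $a/s$ enters only in the coefficients. This yields $G_T=(\alpha_N g(u)+\beta_N)/(\alpha_D g(u)+\beta_D)$ with coefficients depending only on $(w,a,q)$, uniformly in $T_0$, and with $\beta_N(1)=\beta_D(1)=0$. That is exactly the form required by Definition~\ref{def:affine}.

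\emph{(B)$\Rightarrow$(C).} Since the shift $u=ws$ is linear and the normalization $G_T(1)=1$ forces the coefficient match $\alpha_N(1)=\alpha_D(1)$, Lemma~\ref{lem:affine-ratio-rec} applies verbatim. The resulting cancellation is the one in Theorem~\ref{thm:A}: the $k$-th derivative of $N-D$ at $w=1$ involves only $g^{(j)}(s)$ for $j\le k-1$. The quotient rule then expresses the $k$-th factorial moment in terms of the $(k-1)$-jet at $z_0=s$.

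\emph{(C)$\Rightarrow$(D).} I expect this step to be the main obstacle. By renewal,
\[
E[T]=\frac{L(f)}{p(f)},\qquad L(f)=E[\min(T_0,C)]=\sum_t f_t\sum_{j=0}^{t-1}S(j),
\]
where $C$ is the catastrophe age; both $L$ and $p$ are linear in $f$. By (C) with $k=1$, $E[T]=\Phi(c)$ with $c=\sum_t f_t z_0^t$, so the ratio $L/p$ must be constant on every slice $\{f\in\text{simplex}:\sum_t f_t z_0^t=c\}$.

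First, I restrict to supports of three atoms $\{t_1,t_2,t_3\}$, so that each slice is a segment. A ratio of two affine functions is constant along a segment only if they are proportional there. Varying the three atoms, this forces $L-\Phi(c)\,p$ to vanish on each slice. Next, I take $c$ near two different atom values to conclude that $\Phi$ is a Möbius function of $c$. Matching on the point masses then gives $S(t)=\mu z_0^t+\nu$ for some constants $\mu,\nu$, where the constant $\nu$ appears because $\sum_t f_t=1$.

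To kill $\nu$, I first try the plain $k=1$ functional relation itself. Substituting $S(j)=\mu z_0^j+\nu$ into $L$ produces a term $\nu t$, which is not expressible through $z_0^t$ unless $\nu=0$. If this argument leaks, the fallback is the $k=2$ condition, which would then be needed to rule out $\nu\neq0$. Once $\nu=0$, we have $p=\mu\,g(z_0)$, which is (D) with $\lambda=\mu$, and the cycle closes.
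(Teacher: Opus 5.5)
Your proposal is correct, but it handles the hard step differently from the paper. The paper closes the cycle (A)$\Rightarrow$(B)$\Rightarrow$(C)$\Rightarrow$(A) and treats (D) separately. In (C)$\Rightarrow$(A) it glues the two-point M\"obius maps into one global map, uses only the slope ratio $(F(a)-F(b))/(S(a)-S(b))=A/C$ to get $F=rS+c$, and then differences $F(j+1)-F(j)=S(j)$ to force $h_t=-1/r$ for $t\ge 2$.

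You instead go (C)$\Rightarrow$(D) by extracting the \emph{denominator}. On $[z_0^b,z_0^a]$ the local denominator is the linear interpolant $\alpha S(a)+(1-\alpha)S(b)$ of $S$. Its proportionality to the global $Cv+D$ gives $S(t)=\mu z_0^t+\nu$. That proportionality is legitimate because each local map is non-constant, $F/S$ being strictly increasing. Note that ``matching on the point masses'' cannot do this by itself: point masses only give $F(t)/S(t)=\Phi(z_0^t)$, which does not separate $S$ from $F$. So the comparison of denominators is the step you must write out.

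Your $k=1$ elimination of $\nu$ does not leak, so the $k=2$ fallback is unnecessary. $F(t)$ picks up the term $\nu(t-1)$. Along a slice $\{g(z_0)=c\}$, where $p=\mu c+\nu$ is fixed, $E[T]$ therefore varies in $E[T_0]$ with slope $\nu/p$. But $E[T_0]$ is not constant on three-atom slices, because $t\mapsto z_0^t$ is not affine. Alternatively, combining your form with the paper's $F=rS+c$ gives $S(j)=r\mu z_0^j(z_0-1)$, hence $\nu=0$ at once.

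What your route buys is that it lands on (D) with the \emph{same} $z_0$ as in (C). It therefore shows that the evaluation point must be $z_0=s$, which the paper's (C)$\Rightarrow$(A) does not address. The cost is the extra $\nu$-elimination step.

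One small repair in (B)$\Rightarrow$(C). Definition~\ref{def:affine} allows an arbitrary linear $u$, so the jet sits at $u(1)$ rather than at $s$. Also, $a(1)=c(1)$ follows from $G_T(1)=1$ only because $g(u(1))$ takes more than one value as $T_0$ varies.
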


For details, see Theorem~\ref{thm:char}. The hardest implication (C)$\Rightarrow$(A) uses only the $k=1$ case and proceeds by a M\"obius rigidity argument (\S\ref{sec:char-thm}).  The gap between geometric-tail and full memorylessness is exactly one degree of freedom---the first-step hazard $h_1$.  In continuous time, this gap closes entirely (Theorem~\ref{thm:ct-char}); see~\textbf{VI} below.

\medskip
\noindent\textbf{III.\ Exponential limit and the Laplace expansion.}
The attempt decomposition~\eqref{eq:attempt} expresses $T$ as a geometric sum plus a perturbation.  R\'enyi's classical theorem~\cite{Renyi56} asserts that normalized geometric sums converge to the exponential distribution.  The coefficient equality~\eqref{eq:coeff-match-intro} sharpens this into a quantitative expansion.

\begin{theorem}[Laplace expansion]\label{thm:C}
Fix $q \in (0,1)$.  Let $W = T/E[T]$ and define
\[
  \alpha = \frac{s(p - qD_1)}{1-p}, \qquad \beta = \frac{1 - qp - qsD_1}{1-p}.
\]
Then $1 + \alpha - \beta = 0$ as an algebraic identity.  For $p$ sufficiently small and $\theta > 0$ satisfying $\theta p q / (1-p) \leq \epsilon_0$,
\[
  E[e^{-\theta W}] - \frac{1}{1+\theta}
  \;=\;
  \frac{\alpha\,\theta^2}{(1+\theta)^2}
  \;+\; O\!\left(\frac{|\alpha|^2\,\theta^3}{(1+\theta)^3}\right)
  \;+\; O\!\left(\frac{p\,\theta^2}{1+\theta}\right),
\]
with implicit constants depending only on $q$.  In particular, $T/E[T] \xrightarrow{d} \mathrm{Exp}(1)$ as $p \to 0$.
\end{theorem}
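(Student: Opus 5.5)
The plan is to evaluate the Laplace transform directly through the PGF formula~\eqref{eq:pgf-intro}: $E[e^{-\theta W}] = G_T(w)$ with $w = e^{-t}$ and $t = \theta/E[T]$.

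\textbf{Step 1: the algebraic identity and the mean.} The identity $1+\alpha-\beta=0$ is a one-line check. The numerator of $1+\alpha-\beta$ over $1-p$ is
\[
(1-p) + s(p-qD_1) - (1-qp-qsD_1) = p(-1+s+q) = 0 .
\]
For the mean, write $N(w) = g(u)(1-u)$ and $D(w) = 1-w+qw\,g(u)$ as in Theorem~\ref{thm:A}. Then $N(1) = D(1) = pq$, and $\Delta_1 = 1-p$ gives
\[
E[T] = G_T'(1) = \frac{N'(1)-D'(1)}{pq} = \frac{1-p}{pq}.
\]
Hence $t = \theta pq/(1-p)$. The hypothesis $t \le \epsilon_0$ is exactly what keeps $w$ in a fixed neighbourhood of $1$, uniformly in $\theta$.

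\textbf{Step 2: reduce to a single ratio.} I would write
\[
G_T(w) = \frac{1}{1+R(w)}, \qquad R = \frac{D-N}{N}.
\]
The point of the coefficient equality~\eqref{eq:coeff-match-intro} is that $D-N$ vanishes at $w=1$. Expanding $D-N$ to second order in $w-1$ uses $\Delta_1 = 1-p$ and $\Delta_2 = -2sD_1$ from Theorem~\ref{thm:A}. Expanding $N$ to first order uses $N(1) = pq$ and $N'(1) = s(qD_1 - p)$. Substituting $w-1 = -t + t^2/2 + O(t^3)$ and $t = \theta pq/(1-p)$, the linear term of $R$ becomes exactly $\theta$. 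The quadratic term collects into $-\gamma\theta^2$, where $\gamma$ is a combination of $\Delta_2$, $N'(1)$ and the $t^2/2$ correction. I expect $\gamma$ to equal $\alpha$ up to an $O(p)$ correction: the term $-\Delta_2/2 - N'(1)$ over $pq$ times $(pq/(1-p))^2$ produces $s(p-qD_1)/(1-p)$ plus $O(p)$ pieces. The plan is therefore to establish
\[
R = \theta - \alpha\theta^2 + E, \qquad |E| \le C\bigl(p\,\theta^2 + |\alpha|\,\theta^2 t\bigr).
\]

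\textbf{Step 3: uniform remainders (the main obstacle).} $\theta$ is not small, so the remainder must be controlled relative to $(1+\theta)$ factors, uniformly in $\theta$. All derivatives of $g$ are evaluated on $[0,s]$ or near $s$. Since $|g|\le 1$ on the unit disk, Cauchy estimates give $|g^{(k)}(z)| \le k!/(1-|z|)^{k+1}$, bounded in terms of $q$ alone. Hence $D_1$, $|\alpha|$ and all Taylor remainders are $O_q(1)$. The cubic Taylor terms of $D-N$ and $N$ contribute $O(t^3/(pq)) = O(p\,\theta^3 t^{-1}\cdot t) = O(p\,\theta^2\epsilon_0)$-type bounds, which is why the hypothesis $t\le\epsilon_0$ is needed.

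\textbf{Step 4: perturbative inversion.} I would then invert
\[
\frac{1}{1+\theta-\alpha\theta^2+E} = \frac{1}{1+\theta}\cdot\frac{1}{1-\delta}, \qquad \delta = \frac{\alpha\theta^2 - E}{1+\theta}.
\]
Expanding to first order gives the main term $\alpha\theta^2/(1+\theta)^2$. The quadratic term in $\delta$ gives $O(|\alpha|^2\theta^4/(1+\theta)^3)$, and $E$ gives $O(p\theta^2/(1+\theta))$.

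The difficulty is $|\delta|<1$. For large $\theta$, $\alpha\theta^2/(1+\theta)$ is not small. So I would treat large $\theta$ separately, directly from $G_T = 1/(1+R)$. There I would show $\operatorname{Re} R \ge c\,\theta$ using $R \approx \theta(1-\alpha\theta + \dots)$ with $\alpha\theta = O(\alpha t/p)$. Reconciling the stated $\theta^3$ power, rather than $\theta^4$, with the bound $\theta \lesssim \epsilon_0/p$ is where I expect the real work to lie. I would use $|\alpha|\theta^2/(1+\theta) \le |\alpha|\theta$ together with the constraint to trade one power of $\theta$.

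\textbf{Step 5: the limit.} Pointwise convergence of Laplace transforms as $p\to0$, with $\theta$ fixed, then gives $T/E[T]\to\Exp(1)$. This needs $\alpha\to0$, which follows because $p\to0$ forces $D_1\to0$: by Cauchy estimates on $g$, which is nonnegative with $g(s)=p$, one gets $g'(s)\le C_q\, g(s')$ with $s'$ slightly larger than $s$, and so small. Alternatively, the bound $O(|\alpha|\theta^2)$ combined with the continuity theorem suffices once $\alpha\to 0$ is checked.
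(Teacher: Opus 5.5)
Your Step 1 is fine, and so is Step 5 (the smallness of $g(s')$ follows from Jensen: $g(s')\le g(s)^{\ln s'/\ln s}$). The gap is in Steps 2--4: they do not prove the expansion on the range the theorem asserts. The hypothesis only bounds $t=\theta pq/(1-p)\le\epsilon_0$, so $\theta$ may be of order $1/p$. In that range neither $|\alpha|\theta$ nor $p\theta^2$ need be small.

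Your route divides by $N$ and expands $R=(D-N)/N$ as a polynomial in $\theta$. This requires $N/(pq)$ to stay close to its linearization $1+\alpha\theta$, and it need not. Take $T_0\equiv n$. Then $p=s^n$, $\alpha=s^n(s-qn)/(1-p)\approx -qnp$, and $\alpha\theta=st/q-nt$ exactly, while $N/(pq)=e^{-nt}\bigl(1-se^{-t}\bigr)/q$. At $t\asymp\epsilon_0$ this gives $|\alpha|\theta\asymp n\epsilon_0\to\infty$, and $N/(pq)$ is exponentially small in $n$ rather than near $1-nt$.

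So $R=\theta-\alpha\theta^2+E$ with $|E|\le C(p\theta^2+|\alpha|\theta^2t)$ is false there. Even for moderate $\theta$, your $E$ omits the $\alpha^2\theta^3$ term of $\theta/(1+\alpha\theta)$ and the term $\theta R_N$ created by dividing the Taylor remainder of $N$. Moreover $|\delta|$ is unbounded, so the inversion in Step 4 is unavailable.

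Your proposed repair does not close this. Trading a power of $\theta$ against $\theta\le\epsilon_0(1-p)/(pq)$ costs a factor $|\alpha|/p$, which is unbounded ($\asymp qn$ here, $\asymp m\ln n$ in the CCP). You also do not explain how $R\ge c\theta$, which only gives $G_T=O(1/\theta)$, yields the stated expansion.

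The missing idea is never to divide by $N$. Divide both $N$ and $D$ by $pq=N(1)=D(1)$ to get $G_T=(1+\alpha\theta+R_N)/(1+\beta\theta+R_D)$ with $R_N,R_D=O_q(p\theta^2)$. Then use $\beta=1+\alpha$ exactly:
\[
\frac{1+\alpha\theta}{1+\beta\theta}-\frac{1}{1+\theta}
=\frac{\alpha\theta^2}{(1+\theta)^2}\cdot\frac{1}{1+\alpha\theta/(1+\theta)}.
\]
The only expansion parameter is $\alpha\theta/(1+\theta)$, which is bounded by $|\alpha|\le 1/4$ uniformly in $\theta$. This gives the $\theta^3$ power with no trading.

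The remainders enter only through $(1+\theta)R_N-R_D=O\bigl((1+\theta)p\theta^2\bigr)$. They are divided by $(1+\beta\theta+R_D)(1+\theta)\ge(1+\theta)^2/2$, since $|R_D|/(1+\theta)=O(p\theta)=O(\epsilon_0)$. The result is $O(p\theta^2/(1+\theta))$, even when $N/(pq)$ is far from $1+\alpha\theta$.

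Alternatively, your framework can be rescued by splitting at $\theta\asymp p^{-1/2}$:
\begin{itemize}
\item Above it, $G_T\le p/(1-w)\le 2/(q\theta)$, and every term in the claimed identity is $O(p\theta)$.
\item Below it, $|\alpha|\theta$ and $p\theta^2$ are small. The extra terms, such as $\alpha^2\theta^4/(1+\theta)^3$, are absorbed into $O(p\theta^2/(1+\theta))$ because $\alpha^2/p\to0$.
\end{itemize}
That rescue needs the quantitative rate $|\alpha|=O(p|\ln p|)$ of Proposition~\ref{prop:alpha-properties}(c), not merely $\alpha\to0$.
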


See Theorem~\ref{thm:laplace} for details. The identity $1 + \alpha - \beta = 0$ ensures that the first-order term in $\theta$ vanishes exactly---an algebraic consequence of the coefficient match~\eqref{eq:coeff-match-intro}, not an asymptotic cancellation.  The parameter $\alpha$ measures the deviation of the successful attempt from a geometric target (\S\ref{sec:alpha}).

\medskip
\noindent\textbf{IV.\ Sharp two-sided Kolmogorov bound.}
Converting the Laplace expansion to Kolmogorov distance via the smoothing inequality yields $d_K = O(|\alpha| \log(1/|\alpha|))$---a logarithmic artifact.  The following theorem eliminates this loss.

\begin{theorem}[Sharp Kolmogorov bound]\label{thm:D}
There exist constants $c_A, C_A, \delta_A > 0$ depending only on $q$ such that, for $p + |\alpha| \leq \delta_A$,
\[
  c_A\,(p + |\alpha|) \;\leq\; d_K\!\left(\frac{T}{E[T]},\, \mathrm{Exp}(1)\right) \;\leq\; C_A\,(p + |\alpha|).
\]
\end{theorem}

See Theorem~\ref{thm:two-sided} for details. The template $p + |\alpha|$ reflects two independent error sources: a \emph{lattice error} $\Theta(p)$ from the integrality of the attempt count, and a \emph{perturbation error} $\Theta(|\alpha|)$ from the successful attempt's length mismatch.  The \(p\)-term is irreducible, as shown by Proposition~\ref{prop:counterex}; the
\(|\alpha|\)-term is forced by the Laplace-transform lower bound and is dominant in the CCP regime.  The upper bound rests on a Bridge Theorem (Theorem~\ref{thm:bridge}) that decomposes the approximation via Brown's quantitative R\'enyi theorem (\cite{Brown90}); the lower bound combines a lattice argument with evaluation of the Laplace expansion.

\medskip
\noindent\textbf{V.\ CCP specialization and phase transition.}
We instantiate the theory to the coupon collector problem with $m$ reset coupons among $n+m$ types (\S\ref{sec:ccp-app}).  The specialization yields:
\begin{itemize}
\item Closed-form expressions for \(E[T]\) and \(\operatorname{Var}(T)\), together with an explicit harmonic realization of the DRP staircase for higher moments (\S\ref{sec:ccp-moments});
\item The CCP-specific bound $d_K \in [1-o(1),\, 2+o(1)] \cdot |\alpha|$ with $|\alpha| \sim m!\,m\ln n / n^m$ (Theorem~\ref{thm:ccp-sharp}), closing the logarithmic gap left by the smoothing approach;
\item A \emph{discontinuous Gumbel-to-Exponential phase transition} at $m = 0 \to m \geq 1$: one reset coupon suffices to replace extreme-value waiting ($(T_0 - n\ln n)/n \xrightarrow{d} \text{Gumbel}$) by geometric waiting ($T/E[T] \to$ Exponential).
\end{itemize}

\medskip
\noindent\textbf{VI.\ Continuous-time sharpening.}
Replacing PGFs by Laplace transforms and the multiplicative shift $u = ws$ by an additive shift $\sigma = \lambda + r$, the entire discrete theory carries over to continuous time (\S\ref{sec:continuous}), with one qualitative sharpening and one quantitative simplification.

The \emph{sharpening}: the geometric-tail gap of Theorem~\ref{thm:B} closes.  Among continuous-time age-based catastrophe mechanisms, affine Laplace structure characterizes \emph{full} Poisson resetting---constant hazard everywhere, with no residual first-step degree of freedom (Theorem~\ref{thm:ct-char}).  The mechanism is twofold: point masses $T_0 = \delta_x$ with continuous $x$ determine the function $\psi$ on all of $(0,1)$, and the relation $F = \rho\,S + c$ is \emph{differentiated} rather than \emph{differenced}, constraining the hazard rate at every point.

The \emph{simplification} is conditional rather than unconditional: In continuous time, the lattice error disappears. In the regime $|\alpha|\gg p$, the sharp template reduces to $|\alpha|$; when $\alpha$ is small, a residual $O(p)$ term may remain. In the Brownian first-passage model of~\cite{EM11}, this yields $\dK\!\left(\frac{T}{E[T]},\,\Exp(1)\right)\in [1-o(1),\,2+o(1)]\cdot |\alpha|$ (\S\ref{sec:brownian}).

\subsection{Architecture of the Paper}

The paper is organized as follows.  \S\ref{sec:completion} derives the PGF formula, identifies the affine structure, and establishes the coefficient equality. \S\ref{sec:drp} proves the derivative reduction principle and the moment recursion.  \S\ref{sec:char} establishes the characterization theorem via a M\"obius rigidity argument.  \S\ref{sec:exp-approx} develops the exponential approximation: the Laplace expansion, the Bridge Theorem, and the sharp two-sided Kolmogorov bound $d_K \asymp p + |\alpha|$, including a counterexample showing that neither term is redundant.  \S\ref{sec:continuous} presents the continuous-time counterpart---replacing PGFs by Laplace transforms and the multiplicative shift by an additive one---and shows that the geometric-tail gap closes: affine Laplace structure characterizes full Poisson resetting.  \S\ref{sec:ccp-app} and \S\ref{sec:multiphase} instantiate the theory to two structurally contrasting applications: the coupon collector with reset coupons, which produces a harmonic moment staircase and a Gumbel-to-Exponential phase transition, and a multi-phase task under catastrophe, which produces an algebraic staircase and a Gaussian-to-exponential transition with exponentially fast convergence.  \S\ref{sec:related} discusses related work, and \S\ref{sec:further} outlines further directions.

\section{Completion Time under Memoryless Catastrophe}\label{sec:completion}

\subsection{Model and PGF Formula}\label{sec:attempt}\label{sec:model}

Let $T_0$ be a positive-integer-valued, almost surely finite random variable, called the \emph{base completion time}, with $\pi_j = P(T_0 = j)$ for $j \geq 1$ and PGF $g(z) = G_{T_0}(z) = \sum_{k \geq 0} P(T_0=k)\, z^k$ for $|z| \leq 1$.

\begin{definition}[Memoryless catastrophe]\label{def:memoryless}
Fix $q \in (0,1)$ and set $s = 1-q$.  At each step, independently of everything else, a catastrophe occurs with probability $q$.  If the process survives (probability~$s$), the base task may advance; if a catastrophe occurs and the base task has not yet completed, all accumulated progress is destroyed and the process restarts from scratch.  The process terminates at the first step where it survives the catastrophe and the base task completes.
\end{definition}

A maximal run of steps uninterrupted by catastrophe is called an \emph{attempt}.  Each attempt begins from the empty state and either \emph{succeeds} (the base task completes) or \emph{fails} (terminated by catastrophe).  Since the base task completes at step $t$ with no prior catastrophe with probability $s^t\,\pi_t$ (independence), the success probability is\label{lem:success}
\begin{equation}\label{eq:p}
  p \;=\; \sum_{t \geq 1} s^t\, \pi_t \;=\; g(s) \;\in\; (0,1).
\end{equation}
Successive attempts are independent (each restarts from the empty state), so the number of failed attempts before the first success is $N \sim \mathrm{Geom}_0(p)$.\footnote{We write $\mathrm{Geom}_0(p)$ for the distribution on $\{0,1,2,\ldots\}$ with $P(N=k)=(1-p)^k p$ (failure count), and $\mathrm{Geom}_1(p) = \mathrm{Geom}_0(p)+1$ for the trial count.}\label{def:geom}

Denote by $A$ the length of a generic failed attempt, by $B$ the length of the successful attempt, and let $A_1, A_2, \ldots$ be i.i.d.\ copies of $A$.  The total completion time decomposes as
\begin{equation}\label{eq:decomp}
  T \;=\; \sum_{i=1}^{N} A_i \;+\; B,
\end{equation}
where $N$, $(A_i)_{i \geq 1}$, and $B$ are mutually independent.  Standard conditioning (Bayes' rule on the success/failure event) yields the PGFs\label{lem:cond-pgf}
\begin{align}
  G_B(w) &= \frac{g(ws)}{p}, \label{eq:GB} \\[4pt]
  G_A(w) &= \frac{qw\bigl(1 - g(ws)\bigr)}{(1-p)(1-ws)}, \label{eq:GA}
\end{align}
where~\eqref{eq:GA} uses the identity $\sum_{j \geq 0} z^j P(T_0 > j) = (1 - g(z))/(1-z)$.  Combining these with the geometric-sum PGF $E[G_A(w)^N] = p/(1-(1-p)G_A(w))$ gives the following.

\begin{theorem}[PGF formula]\label{thm:pgf}
Under memoryless catastrophe with rate $q$, the PGF of the completion time is
\begin{equation}\label{eq:pgf}
  G_T(w) \;=\; \frac{g(u)(1-u)}{1 - w + qw\,g(u)}, \qquad u = ws.
\end{equation}
\end{theorem}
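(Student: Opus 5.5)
The plan is to assemble the PGF of $T$ from the attempt decomposition~\eqref{eq:decomp}, using the conditional PGFs~\eqref{eq:GB}--\eqref{eq:GA}. Because $N$, $(A_i)$ and $B$ are mutually independent, conditioning on $N$ gives
\[
  G_T(w) \;=\; E\bigl[G_A(w)^N\bigr]\,G_B(w) \;=\; \frac{p}{1-(1-p)G_A(w)}\cdot\frac{g(u)}{p}, \qquad u = ws.
\]
The factor $p$ cancels immediately, so $G_T(w) = g(u)/\bigl(1-(1-p)G_A(w)\bigr)$. This identity holds for $|w|$ small enough that $|(1-p)G_A(w)|<1$, in particular for $w\in[0,1]$. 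It then extends to the region where both sides are analytic.

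Next, substitute~\eqref{eq:GA}. The $(1-p)$ prefactor cancels, giving
\[
  1-(1-p)G_A(w) \;=\; 1 - \frac{qw(1-g(u))}{1-u} \;=\; \frac{1-u-qw+qw\,g(u)}{1-u}.
\]
Since $u = ws = w - qw$, we have $1-u-qw = 1-w$. Hence the denominator equals $(1-w+qw\,g(u))/(1-u)$, and inverting it yields~\eqref{eq:pgf}.

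To be self-contained, I would also briefly justify~\eqref{eq:GB} and~\eqref{eq:GA}.
\begin{itemize}
\item[\emph{(i)}] \emph{Successful attempt.} An attempt succeeds at length $t$ with probability $s^t\pi_t$. Dividing by $p$ gives $G_B(w)=\sum_t (ws)^t\pi_t/p = g(ws)/p$.
\item[\emph{(ii)}] \emph{Failed attempt.} An attempt fails at step $t$ when the first $t-1$ steps survive, $T_0>t-1$, and a catastrophe hits at step $t$. This happens with probability $s^{t-1}q\,P(T_0\ge t)$. Summing against $w^t$ and using $\sum_{j\ge0}z^jP(T_0>j)=(1-g(z))/(1-z)$ with $z=ws$, then dividing by $1-p$, gives~\eqref{eq:GA}.
\end{itemize}

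The convention on which steps count in the attempt length must match Definition~\ref{def:memoryless}, and this is the main bookkeeping point. Specifically, the catastrophe step is counted in the failed attempt, and the completion step requires survival, hence the factor $s^t$.

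The only real obstacle is making sure these conventions are consistent. I would cross-check them with the sanity condition $G_T(1)=1$. At $w=1$ the numerator is $g(s)q = pq$ and the denominator is $q\,g(s) = pq$, so the check passes. That consistency is exactly the coefficient match~\eqref{eq:coeff-match-intro}.
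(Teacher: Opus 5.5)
Your proposal is correct and matches the paper's proof. Like the paper, you take $G_T = G_B\cdot E[G_A^N]$, substitute \eqref{eq:GB}--\eqref{eq:GA}, cancel $p$ and $1-p$, and simplify the denominator via $1-u-qw = 1-w$ (i.e.\ $s+q=1$). Your added derivations of the conditional PGFs use the failure probability $q\,s^{t-1}P(T_0\ge t)$, which is the same convention the paper uses in Lemma~\ref{lem:tail}, and your $G_T(1)=1$ check confirms the bookkeeping.
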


\begin{proof}
By~\eqref{eq:decomp} and mutual independence, $G_T = G_B \cdot E[G_A^N]$.  Substituting~\eqref{eq:GB} and~\eqref{eq:GA}:
\[
  G_T(w) = \frac{g(u)}{p} \cdot \frac{p}{1 - \dfrac{qw(1-g(u))}{1-u}}
  = \frac{g(u)(1-u)}{(1-u) - qw(1-g(u))}.
\]
The denominator simplifies: $(1-ws) - qw + qw\,g(u) = 1 - w(s+q) + qw\,g(u) = 1 - w + qw\,g(u)$, using $s + q = 1$.
\end{proof}

\begin{corollary}[Expected completion time]\label{cor:ET}
$\displaystyle E[T] = \frac{1-p}{qp}$.
\end{corollary}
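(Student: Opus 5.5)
The plan is to compute $E[T] = G_T'(1)$ directly from the PGF formula of Theorem~\ref{thm:pgf}, writing $G_T = N/D$ with
\[
  N(w) = g(u)(1-u), \qquad D(w) = 1 - w + qw\,g(u), \qquad u = ws .
\]
At $w=1$ we have $u=s$ and $g(s)=p$. Hence $N(1) = pq$ and $D(1) = 1 - 1 + qp = pq$, which confirms $G_T(1)=1$. By the quotient rule,
\[
  G_T'(1) = \frac{N'(1) - D'(1)}{D(1)} ,
\]
because $N(1)=D(1)$. This is exactly the $k=1$ instance of the difference $\Delta_1$ in Theorem~\ref{thm:A}.

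Next I compute the two derivatives, with $D_1 = g'(s)$:
\[
  N'(1) = s\,D_1\,q - s\,p, \qquad D'(1) = -1 + qp + q s D_1 .
\]
Their difference is $N'(1) - D'(1) = 1 - qp - sp = 1 - p$. The $D_1$ terms cancel because of the coefficient matching~\eqref{eq:coeff-match-intro}, and this is precisely the first case of Theorem~\ref{thm:A}. Therefore
\[
  E[T] = \frac{1-p}{qp}.
\]

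An alternative route is Wald's identity applied to the decomposition~\eqref{eq:decomp}, which gives $E[T] = E[N]E[A] + E[B]$ with $E[N] = (1-p)/p$. Computing $E[A]$ and $E[B]$ from~\eqref{eq:GA} and~\eqref{eq:GB} would also work, but it is messier.

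The only step needing care is differentiability at $w=1$. The function $g(ws)$ is analytic near $w=1$ because $s<1$ lies inside the unit disk, so $N$ and $D$ are smooth there. Also $D(1)=pq>0$ since $p\in(0,1)$. So no real obstacle arises, and I expect the derivative bookkeeping to be the main (minor) work.
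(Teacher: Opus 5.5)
Your argument is correct and is essentially the paper's. Both differentiate the PGF formula at $w=1$, and your quotient-rule computation $\Delta_1=N'(1)-D'(1)=1-p$ is the $k=1$ case of the moment recursion, which the paper notes recovers this corollary in Example~\ref{ex:moments}. The paper's own proof uses $1-u-qw=1-w$ to write $G_T(w)-1=(1-w)(g(u)-1)/(1-w+qw\,g(u))$; that factorization packages the same cancellation so that $g'(s)$ never appears, and $E[T]=\lim_{w\to1}(G_T(w)-1)/(w-1)$ is then read off directly.
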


\begin{proof}
Differentiating~\eqref{eq:pgf}: $G_T(w)-1 = (1-w)(g(u)-1)/(1-w+qwg(u))$ (using $1-u-qw = 1-w$), so $E[T] = \lim_{w \to 1}(G_T(w)-1)/(w-1) = (1-p)/(qp)$.
\end{proof}


\subsection{Affine Structure and the Derivative Reduction Principle}\label{sec:affine}

We now identify the structural property of~\eqref{eq:pgf} that drives all subsequent results.

\begin{definition}[Uniform affine PGF structure]\label{def:affine}
A catastrophe mechanism has uniform affine PGF structure if there exist functions
\(a(w),b(w),c(w),d(w)\), analytic in a neighborhood of \(w=1\), depending only on the mechanism
(not on the base process \(T_0\)), and a linear function \(u(w)\) with \(u(1)\in(0,1)\), such that for all base processes~$T_0$,
\[
  G_T(w) = \frac{a(w)\,g(u(w)) + b(w)}{c(w)\,g(u(w)) + d(w)},
\]
with $a$ and $c$ not identically zero, and the non-degeneracy condition $c(1)\,g(u(1)) + d(1) \neq 0$ for all admissible~$g$.\label{rem:nondegen}\footnote{Non-degeneracy excludes pathological representations obtained by multiplying numerator and denominator by a factor vanishing at $w = 1$, which would create a $0/0$ indeterminacy and invalidate the coefficient-matching argument.}
\end{definition}

\begin{proposition}[Affine structure and coefficient equality]\label{prop:affine}
The PGF formula~\eqref{eq:pgf} exhibits uniform affine structure with
\[
  a(w) = 1-u, \quad b(w) = 0, \quad c(w) = qw, \quad d(w) = 1-w, \quad u(w) = ws.
\]
Writing $N(w) = a(w)\,g(u) + b(w)$ and $D(w) = c(w)\,g(u) + d(w)$ for the numerator and denominator, the normalization $G_T(1) = 1$ forces
\begin{equation}\label{eq:coeff-match}
  \alpha_N(1) \;:=\; a(1) \;=\; q \;=\; c(1) \;=:\; \alpha_D(1).
\end{equation}
\end{proposition}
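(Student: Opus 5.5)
The argument has two parts: checking the representation in Definition~\ref{def:affine}, and then extracting the coefficient equality from normalization.

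\textbf{The representation.} By Theorem~\ref{thm:pgf}, $G_T(w) = g(u)(1-u)/(1-w+qw\,g(u))$ with $u = ws$. This is exactly of the form in Definition~\ref{def:affine} with $a(w)=1-ws$, $b\equiv 0$, $c(w)=qw$, $d(w)=1-w$ and $u(w)=ws$.

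\textbf{Requirements of the definition.} Each requirement is verified directly:
\begin{itemize}
\item All four coefficient functions are polynomials, hence analytic near $w=1$.
\item They depend only on $q$, not on $T_0$.
\item $u$ is linear with $u(1)=s\in(0,1)$.
\item $a$ and $c$ are not identically zero.
\item Non-degeneracy holds because $c(1)g(u(1))+d(1)=q\,g(s)=qp>0$. Here $p\in(0,1)$ by~\eqref{eq:p}, since $T_0$ is positive-integer valued and a.s.\ finite.
\end{itemize}

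\textbf{Coefficient equality, by direct evaluation.} The quickest route is to compute $a(1)=1-s=q$ and $c(1)=q$.

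\textbf{Coefficient equality, from normalization.} The claim is that this equality is \emph{forced} by $G_T(1)=1$, so I would also give the structural derivation. Since $b(1)=d(1)=0$, non-degeneracy lets us evaluate at $w=1$ without a $0/0$ indeterminacy:
\[
  G_T(1)=\frac{a(1)\,g(s)}{c(1)\,g(s)}=\frac{a(1)}{c(1)}.
\]
Because $G_T$ is the PGF of an a.s.\ finite variable, $G_T(1)=1$, and the normalization therefore forces $a(1)=c(1)$. The explicit value $q$ then comes from either expression.

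\textbf{Main obstacle.} The only delicate point is justifying the evaluation at $w=1$. The formula is a ratio of functions analytic near $1$ with nonzero denominator $qp$ there, so the ratio is continuous at $1$. Its value at $1$ coincides with $\lim_{w\uparrow1}G_T(w)=1$ by Abel's theorem, since $T$ is a.s.\ finite. That $T$ is a.s.\ finite follows from the decomposition~\eqref{eq:decomp}, because $N$ is geometric with $p>0$.
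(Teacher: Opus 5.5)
Your proposal is correct and follows essentially the same route as the paper: since $b(1)=d(1)=0$, normalization gives $a(1)\,g(s)=c(1)\,g(s)$, so $g(s)=p>0$ yields $a(1)=c(1)$, and $a(1)=1-s=q$. Your explicit checks of the remaining requirements of Definition~\ref{def:affine} (analyticity, linearity of $u$, non-degeneracy $qp>0$) and your continuity/Abel justification for evaluating at $w=1$ are sound details that the paper leaves implicit.
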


\begin{proof}
Immediate from~\eqref{eq:pgf}:  at $w = 1$, $b(1) = 0 = d(1)$, so $G_T(1) = 1$ gives $a(1)\,g(s) = c(1)\,g(s)$ and $a(1) = c(1)$ (since $g(s) = p > 0$); since $a(1) = 1-s = q$, the result follows.
\end{proof}

The coefficient equality~\eqref{eq:coeff-match} is a direct consequence of the probability axiom $G_T(1) = 1$ applied to the affine form.  Together with the linearity of $u(w) = ws$ and the affine dependence on $g(u)$, it produces an exact derivative cancellation at every moment level.

\medskip

The PGF formula expresses $G_T(w) = N(w)/D(w)$, where\label{sec:drp}
\begin{equation}\label{eq:ND-def}
  N(w) = (1-u)\,g(u), \qquad D(w) = (1-w) + qw\,g(u), \qquad u = ws.
\end{equation}
The $k$-th factorial moment is $\eta_k = G_T^{(k)}(1)$.  Since $u = ws$ is linear, $\frac{d^j}{dw^j} g(ws) = s^j g^{(j)}(ws)$ with no Fa\`a di Bruno corrections.  Writing $D_j = g^{(j)}(s)$, one might expect $\eta_k$ to depend on the full $k$-jet $\{D_0, \ldots, D_k\}$.  The following theorem shows that $D_k$ cancels exactly.

\begin{theorem}[Derivative reduction principle]\label{thm:drp}
Define $\Delta_k = N^{(k)}(1) - D^{(k)}(1)$ for $k \geq 1$.  Then
\begin{equation}\label{eq:Delta}
  \Delta_k =
  \begin{cases}
    1-p, & k=1, \\[3pt]
    -k\,s^{k-1}\,D_{k-1}, & k \geq 2.
  \end{cases}
\end{equation}
In particular, $\Delta_k$ does not contain $D_k = g^{(k)}(s)$.
\end{theorem}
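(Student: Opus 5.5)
Since $u=ws$ is linear in $w$, each derivative can be computed by the Leibniz rule with no Fa\`a di Bruno corrections. The idea is to expand $N^{(k)}$ and $D^{(k)}$ separately, evaluate both at $w=1$ (so that $u=s$), and subtract. The coefficient equality~\eqref{eq:coeff-match} will then make the top-order terms cancel.

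\textbf{Numerator.} Here $N(w)=(1-ws)\,g(ws)$. The prefactor $1-ws$ is affine, so only two Leibniz terms survive:
\[
  N^{(k)}(w) = (1-ws)\,s^k g^{(k)}(ws) \;-\; k\,s\cdot s^{k-1} g^{(k-1)}(ws).
\]
At $w=1$ this gives $N^{(k)}(1) = q\,s^k D_k - k\,s^k D_{k-1}$.

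\textbf{Denominator.} Here $D(w)=(1-w)+qw\,g(ws)$. The term $qw$ is affine, so again two terms survive, and $(1-w)$ contributes $-1$ only when $k=1$:
\[
  D^{(k)}(1) = -\mathbf{1}_{k=1} + q\,s^k D_k + k\,q\,s^{k-1} D_{k-1}.
\]

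\textbf{Subtraction.} Because $a(1)=c(1)=q$, the $D_k$ terms cancel exactly:
\[
  \Delta_k = \mathbf{1}_{k=1} - k\,s^{k-1}D_{k-1}\,(s+q) = \mathbf{1}_{k=1} - k\,s^{k-1}D_{k-1},
\]
using $s+q=1$. For $k\ge2$ this is $-k\,s^{k-1}D_{k-1}$. For $k=1$ it is $1-D_0=1-g(s)=1-p$.

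The cancellation of $D_k$ in the difference is what proves the claim that $\Delta_k$ does not involve $g^{(k)}(s)$. The only delicate point is the sign and multiplicity bookkeeping, in particular the Kronecker term coming from $(1-w)$. I would check the $k=1$ case against Corollary~\ref{cor:ET}: from the general rule $G_T'(1)=(N'(1)-D'(1))/D(1)$ with $D(1)=qp$, we get $E[T]=(1-p)/(qp)$, which matches.
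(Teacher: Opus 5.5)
Your proof is correct and follows the same route as the paper. You expand $N^{(k)}(1)$ and $D^{(k)}(1)$ by the Leibniz rule, using that $u=ws$ and the prefactors $1-ws$ and $qw$ are affine, and then cancel the $D_k$ terms via $a(1)=c(1)=q$ and $s+q=1$. The only cosmetic difference is that you fold the $k=1$ case into the general formula through the indicator $\mathbf{1}_{k=1}$ coming from $(1-w)'$, whereas the paper handles $k=1$ by a separate direct computation.
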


\begin{proof}
We compute $N^{(k)}(1)$ and $D^{(k)}(1)$ separately via the Leibniz rule, then subtract.

\medskip
\noindent\emph{Case $k \geq 2$.}\;
Write $N(w) = f_1(w) \cdot g(ws)$ with $f_1(w) = 1 - ws$.  Since $f_1(1) = q$, $f_1'(w) = -s$, and $f_1^{(j)} = 0$ for $j \geq 2$, the Leibniz rule gives
\begin{equation}\label{eq:Nk}
  N^{(k)}(1) = q\,s^k D_k - k\,s^k D_{k-1}.
\end{equation}
Similarly, $D(w) = (1-w) + f_2(w) \cdot g(ws)$ with $f_2(w) = qw$.  For $k \geq 2$, $\frac{d^k}{dw^k}(1-w) = 0$.  Since $f_2(1) = q$, $f_2'(w) = q$, and $f_2^{(j)} = 0$ for $j \geq 2$:
\begin{equation}\label{eq:Dk}
  D^{(k)}(1) = q\,s^k D_k + k\,q\,s^{k-1} D_{k-1}.
\end{equation}
Subtracting~\eqref{eq:Dk} from~\eqref{eq:Nk}:
\[
  \Delta_k = \underbrace{(q\,s^k - q\,s^k)}_{= 0}\,D_k
  + \bigl(-k\,s^k - k\,q\,s^{k-1}\bigr)\,D_{k-1}
  = -k\,s^{k-1}(s+q)\,D_{k-1}
  = -k\,s^{k-1} D_{k-1},
\]
where the last step uses $s + q = 1$.  The $D_k$-terms cancel because they enter $N^{(k)}(1)$ and $D^{(k)}(1)$ with the same coefficient $q\,s^k$---precisely the coefficient equality~\eqref{eq:coeff-match}.

\medskip
\noindent\emph{Case $k = 1$.}\;
Direct computation: $N'(1) = -sp + qsD_1$, $D'(1) = -1 + qp + qsD_1$.  Hence
\[
  \Delta_1 = (-sp + qsD_1) - (-1 + qp + qsD_1) = 1 - p(s+q) = 1 - p. \qedhere
\]
\end{proof}

The next lemma extracts the cancellation mechanism behind Theorem~\ref{thm:drp}.

\begin{lemma}[Affine-ratio recursion]\label{lem:affine-ratio-rec}
Let
\[
  R(z)=\frac{a(z)h(u(z))+b(z)}{c(z)h(u(z))+d(z)},
  \qquad
  u(z)=u_0+\kappa(z-z_*),
\]
where $a,b,c,d$ are $C^\infty$ near $z_*$, and $a(z_*)=c(z_*), b(z_*)=d(z_*), D(z_*):=c(z_*)h(u_0)+d(z_*)\neq 0$.
Set
\[
  N(z)=a(z)h(u(z))+b(z),\qquad D(z)=c(z)h(u(z))+d(z).
\]
Then, for every $k\ge 1$, $\Delta_k:=N^{(k)}(z_*)-D^{(k)}(z_*)$ depends only on $h(u_0),h'(u_0),\dots,h^{(k-1)}(u_0)$, and
\[
  R^{(k)}(z_*)
  =
  \frac{1}{D(z_*)}
  \left(
    \Delta_k-\sum_{j=1}^{k-1}\binom{k}{j}D^{(j)}(z_*)R^{(k-j)}(z_*)
  \right).
\]
Consequently, $R^{(k)}(z_*)$ depends only on the $(k-1)$-jet of $h$ at $u_0$.
\end{lemma}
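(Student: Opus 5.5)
The plan has three parts: compute the $k$-th derivatives of $N$ and $D$ at $z_*$ by the Leibniz rule, identify and cancel the only terms containing $h^{(k)}(u_0)$, and then differentiate the identity $N = RD$ to get the recursion. Throughout, write $H_j = h^{(j)}(u_0)$.

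\emph{Step 1: isolate the top-order terms.} Because $u$ is affine with slope $\kappa$, we have $\frac{d^j}{dz^j}h(u(z)) = \kappa^j h^{(j)}(u(z))$, with no Fa\`a di Bruno terms. The Leibniz rule then gives
\[
N^{(k)}(z_*) = \sum_{j=0}^{k}\binom{k}{j} a^{(k-j)}(z_*)\,\kappa^j H_j + b^{(k)}(z_*),
\]
and the same expression with $(c,d)$ in place of $(a,b)$ gives $D^{(k)}(z_*)$. The only term containing $H_k$ is the one with $j=k$. It equals $a(z_*)\kappa^k H_k$ in $N^{(k)}(z_*)$ and $c(z_*)\kappa^k H_k$ in $D^{(k)}(z_*)$.

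\emph{Step 2: cancel them.} Subtracting, and using the hypothesis $a(z_*)=c(z_*)$, we get
\[
\Delta_k = \sum_{j=0}^{k-1}\binom{k}{j}\bigl(a^{(k-j)}-c^{(k-j)}\bigr)(z_*)\,\kappa^j H_j + \bigl(b^{(k)}-d^{(k)}\bigr)(z_*).
\]
This involves only $H_0,\dots,H_{k-1}$. The coefficients are derivatives of the fixed functions $a,b,c,d$, which do not depend on $h$. The hypothesis $b(z_*)=d(z_*)$ is not needed for the cancellation itself. It only ensures that $N(z_*)=D(z_*)$, so that $R(z_*)=1$, which is consistent with the normalization.

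\emph{Step 3: derive the recursion.} Since $D(z_*)\neq 0$ and $D$ is continuous, $D\neq 0$ near $z_*$, so $R$ is $C^\infty$ there and $N = RD$. Differentiating $k$ times with Leibniz gives
\[
N^{(k)} = \sum_{j=0}^{k}\binom{k}{j} D^{(j)}R^{(k-j)}.
\]
Separate the $j=0$ term $D\,R^{(k)}$ and the $j=k$ term $D^{(k)}R$, and evaluate at $z_*$ using $R(z_*)=N(z_*)/D(z_*)=1$. Moving $D^{(k)}(z_*)$ to the left-hand side produces $N^{(k)}(z_*)-D^{(k)}(z_*)=\Delta_k$. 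Solving for $R^{(k)}(z_*)$ gives exactly the displayed formula.

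\emph{Step 4: induction on $k$.} The claim is that $R^{(k)}(z_*)$ depends only on $H_0,\dots,H_{k-1}$.
\begin{itemize}
\item For $k=1$, the sum in the recursion is empty and $\Delta_1$ involves only $H_0$.
\item For the inductive step, look at each factor in the recursion. $D(z_*)$ depends only on $H_0$. Each $D^{(j)}(z_*)$ with $j\le k-1$ depends on $H_0,\dots,H_j$. Each $R^{(k-j)}(z_*)$ with $j\ge1$ depends, by the induction hypothesis, on $H_0,\dots,H_{k-j-1}$. By Step 2, $\Delta_k$ depends on $H_0,\dots,H_{k-1}$.
\end{itemize}
So $R^{(k)}(z_*)$ depends only on the $(k-1)$-jet of $h$ at $u_0$.

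The only delicate point is in Step 3: using $R(z_*)=1$, which comes from $b(z_*)=d(z_*)$ together with $a(z_*)=c(z_*)$, so that $D^{(k)}$ pairs with $N^{(k)}$ to form $\Delta_k$. Everything else is routine bookkeeping.
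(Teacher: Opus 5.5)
Your proposal is correct and follows the paper's proof: the linear shift removes Fa\`a di Bruno terms, the coefficients $a(z_*)\kappa^k$ and $c(z_*)\kappa^k$ of $h^{(k)}(u_0)$ cancel by $a(z_*)=c(z_*)$, and differentiating $DR=N$ $k$ times with the $j=0$ and $j=k$ terms isolated gives the recursion. You also make explicit two points the paper leaves implicit: that $R(z_*)=1$, which uses both coefficient equalities, is what lets $D^{(k)}(z_*)$ combine with $N^{(k)}(z_*)$ into $\Delta_k$; and the induction that yields the final $(k-1)$-jet claim.
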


\begin{proof}
Since $u$ is linear, $(h\circ u)^{(k)}(z_*)=\kappa^k h^{(k)}(u_0)$ and no Fa\`a di Bruno terms appear. Hence the coefficient of $h^{(k)}(u_0)$ in $N^{(k)}(z_*)$ is $a(z_*)\kappa^k$, while in $D^{(k)}(z_*)$ it is $c(z_*)\kappa^k$; these agree by $a(z_*)=c(z_*)$, so $\Delta_k$ contains no $h^{(k)}(u_0)$. Differentiating the identity $D(z)R(z)=N(z)$ $k$ times at $z=z_*$ and isolating the $j=0$ and $j=k$ terms gives the displayed recursion.
\end{proof}


\subsection{Moment Recursion and the Complexity Staircase}\label{sec:staircase}

\begin{corollary}[Moment recursion]\label{cor:moment-recursion}
The factorial moments of $T$ satisfy $\eta_0 = 1$ and, for $k \geq 1$,
\begin{equation}\label{eq:recursion}
  \eta_k = \frac{1}{pq}\left(\Delta_k - \sum_{j=1}^{k-1} \binom{k}{j} D^{(j)}(1)\,\eta_{k-j}\right),
\end{equation}
where $D^{(j)}(1)$ is the $j$-th derivative of $D(w) = (1-w) + qw\,g(ws)$ at $w = 1$.  Since $\Delta_k$ involves at most $D_{k-1}$ (Theorem~\ref{thm:drp}) and $D^{(j)}(1)$ involves at most $D_j$ for $j \leq k-1$ (by~\eqref{eq:Dk}), induction on $k$ confirms: $\eta_k$ depends on the $(k{-}1)$-jet $\{D_0, D_1, \ldots, D_{k-1}\}$, never on $D_k$.
\end{corollary}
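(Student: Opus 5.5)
The plan is to obtain the recursion by differentiating the identity $D(w)\,G_T(w) = N(w)$, which follows from Theorem~\ref{thm:pgf}, $k$ times at $w=1$. This is exactly the recursion of Lemma~\ref{lem:affine-ratio-rec}, specialized to $R=G_T$, $z_*=1$, $h=g$, $u(w)=ws$ (so $u_0=s$, $\kappa=s$), and the coefficients of Proposition~\ref{prop:affine}.

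First I will check the hypotheses of that lemma. The condition $a(1)=c(1)=q$ is~\eqref{eq:coeff-match}. The condition $b(1)=d(1)=0$ holds because $b\equiv 0$ and $d(1)=1-1=0$. For the non-degeneracy condition, $D(1)=q\,g(s)=pq$, which is positive since $p=g(s)\in(0,1)$ by~\eqref{eq:p}.

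One analytic point needs care. $G_T$, $N$ and $D$ must be $k$ times differentiable at $w=1$, so that $\eta_k=G_T^{(k)}(1)$ is meaningful. Since $s<1$, the function $g(ws)$ is analytic for $|w|<1/s$, so $N$ and $D$ are analytic near $w=1$. Because $D(1)\neq 0$, the quotient $G_T=N/D$ is analytic near $w=1$ as well. It follows that all factorial moments are finite and equal the one-sided derivatives $G_T^{(k)}(1)$.

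Next I will apply the Leibniz rule to $D\,G_T=N$. This gives
\[
\sum_{j=0}^{k}\binom{k}{j}D^{(j)}(1)\,\eta_{k-j} = N^{(k)}(1).
\]
The $j=0$ term is $pq\,\eta_k$ and the $j=k$ term is $D^{(k)}(1)\,\eta_0 = D^{(k)}(1)$. Moving the $j=k$ term to the right-hand side turns $N^{(k)}(1)-D^{(k)}(1)$ into $\Delta_k$. Dividing by $pq$ then yields~\eqref{eq:recursion}, with $\eta_0=G_T(1)=1$.

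For the dependence claim I will use strong induction on $k$. By Theorem~\ref{thm:drp}, $\Delta_k$ involves only $p=D_0$ (when $k=1$) or $D_{k-1}$ (when $k\ge 2$). By the Leibniz computation~\eqref{eq:Dk}, extended to the low orders $D'(1)=-1+qp+qsD_1$, each $D^{(j)}(1)$ involves only $D_{j-1}$ and $D_j$; for $j\le k-1$ these lie within the $(k-1)$-jet. By the induction hypothesis, each $\eta_{k-j}$ with $1\le j\le k-1$ depends on the $(k-j-1)$-jet, which is contained in the $(k-1)$-jet. The prefactor $1/(pq)$ involves only $D_0$.

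The only step needing care is the $k=1$ base case. There the sum is empty and $\eta_1=(1-p)/(pq)$, which agrees with Corollary~\ref{cor:ET} and depends only on $D_0$. I do not expect any real obstacle: the substantive cancellation has already been done in Theorem~\ref{thm:drp}.
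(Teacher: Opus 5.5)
Your proposal is correct and follows the paper's proof. You apply the Leibniz rule to $D\,G_T=N$ at $w=1$, isolate the $j=0$ term $pq\,\eta_k$ and the $j=k$ term $D^{(k)}(1)$, absorb the latter into $\Delta_k$, and then induct using Theorem~\ref{thm:drp} and~\eqref{eq:Dk}. Your extra check that $N/D$ is analytic near $w=1$ fills in a point the paper leaves implicit: by monotone convergence of $G_T^{(k)}(w)$ as $w\uparrow 1$, the factorial moments are finite and equal these derivatives.
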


\begin{proof}
Apply the Leibniz rule to $D \cdot G_T = N$ at $w = 1$, separate the $j = 0$ and $j = k$ terms, and use $\Delta_k = N^{(k)}(1) - D^{(k)}(1)$ to cancel $D^{(k)}(1)$.
\end{proof}

The low-order derivatives of $D$ at $w = 1$ are:
\begin{equation}\label{eq:D-derivs}
  D(1) = qp, \qquad
  D'(1) = -1 + qp + qsD_1, \qquad
  D''(1) = 2qsD_1 + qs^2 D_2.
\end{equation}

\begin{example}[First two moments]\label{ex:moments}
From~\eqref{eq:recursion} with $k = 1$: $\eta_1 = (1-p)/(pq)$, recovering Corollary~\ref{cor:ET}.  With $k = 2$: $\eta_2 = \frac{1}{pq}(-2sD_1 - 2D'(1)\,\eta_1)$.  The variance $\Var(T) = \eta_2 + \eta_1 - \eta_1^2$ simplifies to
\begin{equation}\label{eq:var}
  \Var(T) = \frac{(1-p)(1+ps) - 2qsD_1}{p^2 q^2},
\end{equation}
which depends on $p = D_0$ and $D_1 = g'(s)$ but not on $D_2$---exactly as the DRP predicts.
\end{example}

The DRP asserts that each moment level requires at most $D_{k-1}$.  The following shows that, for base processes with sufficiently rich structure, it requires \emph{exactly} $D_{k-1}$.

\begin{proposition}[Tightness of the staircase]\label{prop:tight}
For $k \geq 2$, the total coefficient of $D_{k-1}$ in $\eta_k$ is $-ks^{k-1}/(p^2 q) \neq 0$, provided $D_{k-1} \neq 0$.  The latter holds whenever $P(T_0 \geq k) > 0$.  In particular, for base processes with unbounded support, every moment level introduces exactly one new derivative of $g$.
\end{proposition}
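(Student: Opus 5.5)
The plan is to read off the coefficient of $D_{k-1}$ directly from the moment recursion~\eqref{eq:recursion} of Corollary~\ref{cor:moment-recursion}. We treat $\eta_k$ as a polynomial expression in the formal jet variables $D_0,D_1,\dots,D_{k-1}$, with $p=D_0$ held fixed. The first step is to locate every place $D_{k-1}$ can enter the right-hand side of~\eqref{eq:recursion}; there are three.
\begin{enumerate}[(i)]
\item By Theorem~\ref{thm:drp}, $\Delta_k=-k s^{k-1}D_{k-1}$ for $k\ge 2$, which contributes coefficient $-k s^{k-1}$.
\item In the sum, the derivative $D^{(j)}(1)$ involves only $D_{j-1},D_j$ by~\eqref{eq:Dk} (or by~\eqref{eq:D-derivs} when $j=1$). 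So only the term $j=k-1$ contains $D_{k-1}$. There the coefficient of $D_{k-1}$ in $D^{(k-1)}(1)$ is $q s^{k-1}$. This holds for $k-1\ge 2$ by~\eqref{eq:Dk}, and for $k=2$ it is the $qsD_1$ term of $D'(1)$. This term is multiplied by $\binom{k}{k-1}\eta_1=k\eta_1$, and $\eta_1=(1-p)/(pq)$ depends on $D_0$ only.
\item For $1\le j\le k-1$, the factor $\eta_{k-j}$ depends only on $D_0,\dots,D_{k-j-1}\subseteq\{D_0,\dots,D_{k-2}\}$ by the induction in Corollary~\ref{cor:moment-recursion}. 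So it contributes nothing, and no product of two terms both containing $D_{k-1}$ appears. Hence $\eta_k$ is affine in $D_{k-1}$.
\end{enumerate}

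The second step is to sum the two contributions:
\[
\frac{1}{pq}\Bigl(-k s^{k-1}-k\,q s^{k-1}\eta_1\Bigr)
=-\frac{k s^{k-1}}{pq}\Bigl(1+\frac{1-p}{p}\Bigr)
=-\frac{k s^{k-1}}{p^2 q}.
\]
This is nonzero because $s\in(0,1)$ and $p\in(0,1)$.

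The third step is nonvanishing of $D_{k-1}$. Expand
\[
D_{k-1}=g^{(k-1)}(s)=\sum_{j\ge k-1} j(j-1)\cdots(j-k+2)\,\pi_j\,s^{\,j-k+1}.
\]
This is a sum of nonnegative terms, and it is strictly positive as soon as some $\pi_j>0$ with $j\ge k-1$. In particular this holds if $P(T_0\ge k)>0$. If $T_0$ has unbounded support, this holds for every $k$, so every level $\eta_k$ genuinely picks up the new derivative $D_{k-1}$ with a nonzero coefficient.

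The main obstacle is interpretive rather than computational: making precise that ``coefficient of $D_{k-1}$'' means the formal partial derivative of $\eta_k$ as a function of the jet. The claim that this is the exact dependence requires that $D_{k-1}$ can vary while the lower jet is fixed. If a fuller justification is needed, I would exhibit this variation by perturbing $T_0$ within finitely supported laws with support in $\{1,\dots,M\}$, $M$ large. The evaluation map $(\pi_j)\mapsto(D_0,\dots,D_{k-1})$ is then linear with a full-rank, Vandermonde-type matrix. Hence one can move $D_{k-1}$ while fixing $D_0,\dots,D_{k-2}$, and affinity in $D_{k-1}$ shows that $\eta_k$ changes at the stated nonzero rate.
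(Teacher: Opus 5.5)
Your proof is correct and is essentially the paper's argument: $D_{k-1}$ enters $\eta_k$ only through $\Delta_k=-ks^{k-1}D_{k-1}$ and through the $j=k-1$ term $k\,D^{(k-1)}(1)\,\eta_1$. Summing these gives $-ks^{k-1}/(p^2q)$, and positivity of the series for $g^{(k-1)}(s)$ settles nonvanishing; your indexing of that series from $j\ge k-1$ is in fact more accurate than the paper's, which starts at $t\ge k$.

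Your closing variation argument goes beyond anything the paper does. If you keep it, the perturbation must also preserve $\sum_j\pi_j=1$. So what you need is that the constant functional together with the $k$ jet functionals is linearly independent on $\{1,\dots,M\}$, applied at an interior point of the simplex. This holds for $M\ge k+1$, because a nonzero function $c_0+P(j)s^j$ with $\deg P\le k-1$ has at most $k$ real zeros.
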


\begin{proof}
By Theorem~\ref{thm:drp}, $\Delta_k = -ks^{k-1} D_{k-1}$ for $k \geq 2$.  By induction, $\eta_{k-j}$ for $j \geq 1$ depends on $\{D_0, \ldots, D_{k-j-1}\}$, so $D_{k-1}$ enters the sum in~\eqref{eq:recursion} only through the $j = k-1$ term $D^{(k-1)}(1)\,\eta_1$, where $D^{(k-1)}(1)$ contains $D_{k-1}$ with coefficient $qs^{k-1}$.  Combining: the total coefficient of $D_{k-1}$ in $\eta_k$ is $\frac{1}{pq}(-ks^{k-1} - k\,qs^{k-1}\,\eta_1) = \frac{-ks^{k-1}}{pq}(1 + q\eta_1) = \frac{-ks^{k-1}}{p^2 q}$, using $\eta_1 = (1-p)/(pq)$.

For the second claim: $g^{(k-1)}(s) = \sum_{t \geq k} \frac{t!}{(t-k+1)!} \pi_t s^{t-k+1} > 0$ whenever some $\pi_t > 0$ for $t \geq k$.
\end{proof}

Together, Theorem~\ref{thm:drp} and Proposition~\ref{prop:tight} establish a sharp complexity staircase: for base processes with unbounded support, each moment of $T$ introduces exactly one new derivative of $g$, and this staircase collapses only when $T_0$ has bounded support.

\section{Characterization of Geometric-Tail Catastrophe}\label{sec:char}

\S\ref{sec:completion} established a chain of implications under memoryless catastrophe:
\[
  \text{memoryless catastrophe}
  \;\Longrightarrow\;
  \text{affine PGF structure}
  \;\Longrightarrow\;
  \text{derivative reduction (DRP)}.
\]
A natural question arises: \emph{is memoryless catastrophe the only mechanism producing these properties, or do they hold more broadly?}  This section answers the question precisely.  The answer is that affine PGF structure---and hence the DRP---characterizes a slightly larger class, which we call \emph{geometric-tail catastrophe}: mechanisms whose hazard rate is constant from the second step onward, with the first-step hazard left free.  Full memorylessness is the unique member of this class satisfying an additional algebraic condition ($p = g(s)$ rather than $p \propto g(s)$).

To state and prove this characterization, we first extend the framework of \S\ref{sec:completion} to general age-based catastrophe mechanisms.


\subsection{General Hazard Framework}\label{sec:hazard}

\begin{definition}[Age-based catastrophe mechanism]\label{def:hazard}
An \emph{age-based catastrophe mechanism} is specified by a hazard sequence $(h_t)_{t \geq 1}$ with $h_t \in (0,1)$ for all $t$.  Here $h_t$ is the conditional probability of catastrophe at step $t$, given that no catastrophe has occurred in steps $1, \ldots, t-1$.  The associated \emph{survival function} is
\begin{equation}\label{eq:survival}
  S(0) = 1, \qquad S(t) = \prod_{i=1}^{t}(1 - h_i), \quad t \geq 1.
\end{equation}
Thus $S(t)$ is the probability of surviving $t$ consecutive steps without catastrophe.  The requirement $h_t \in (0,1)$ ensures $S(t) > 0$ for all $t$ (every step has a positive probability of both survival and catastrophe).
\end{definition}

Note that memoryless catastrophe (Definition~\ref{def:memoryless}) is the special case $h_t \equiv q$, giving $S(t) = s^t$. The attempt decomposition~\eqref{eq:decomp} remains valid under any age-based mechanism: each attempt starts from the empty state, so successive attempts are i.i.d.  The success probability generalizes from $p = g(s)$ to
\begin{equation}\label{eq:p-general}
  p = \sum_{t \geq 1} S(t)\,\pi_t,
\end{equation}
which is no longer a PGF evaluation unless $S(t)$ is exponential in $t$.  The expected completion time generalizes from Theorem~\ref{thm:pgf} as follows.

\begin{lemma}[Expected attempt length]\label{lem:attempt-length}
Under an age-based mechanism with survival function $S$, the expected length of a single attempt (regardless of outcome) satisfies
\begin{equation}\label{eq:ell}
  E[\ell] = \sum_{j \geq 1} F(j)\,\pi_j, \qquad F(j) = \sum_{t=0}^{j-1} S(t).
\end{equation}
\end{lemma}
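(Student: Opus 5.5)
We compute $E[\ell]$ by conditioning on the base completion time $T_0 = j$ and then summing over $j$ with weights $\pi_j$. The argument is a single tail-sum computation, using only that catastrophes are independent of the base task.

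\textbf{Step 1: identify the attempt length.} Fix $T_0 = j$ and let $\tau$ be the step of the first catastrophe within the attempt. By Definition~\ref{def:hazard}, $P(\tau > t) = S(t)$. The attempt ends either at success, which happens at step $j$ when $\tau > j$, or at failure, which happens at step $\tau$ when $\tau \le j$. (We adopt the convention that a catastrophe at step $j$ itself means failure, consistent with Definition~\ref{def:memoryless}.) Hence, conditionally on $T_0 = j$,
\[
  \ell = \min(\tau, j).
\]

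\textbf{Step 2: evaluate the conditional mean.} For a positive-integer-valued variable,
\[
  E[\min(\tau,j)] = \sum_{t \ge 0} P\bigl(\min(\tau,j) > t\bigr).
\]
For $t \le j-1$ we have $P(\min(\tau,j) > t) = P(\tau > t) = S(t)$, and for $t \ge j$ this probability is $0$. Therefore
\[
  E[\ell \mid T_0 = j] = \sum_{t=0}^{j-1} S(t) = F(j).
\]

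\textbf{Step 3: average over $T_0$.} Since $T_0$ is independent of the catastrophe process, the tower property gives
\[
  E[\ell] = \sum_{j \ge 1} \pi_j\, F(j).
\]
All terms are nonnegative, so the interchange of sums is justified by Tonelli's theorem; no convergence assumption is needed, as both sides may equal $+\infty$ simultaneously. In fact they cannot be infinite: $S(t) \le 1$ gives $F(j) \le j$, and the bound $h_t \ge$ any positive constant is not required.

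\textbf{Main obstacle and sanity check.} The only delicate point is the timing convention at step $j$: whether a catastrophe there kills the completion. Under the convention above, $S(j)$ is the probability of survival through step $j$, which also reproduces the success probability $p = \sum_j S(j)\,\pi_j$ in~\eqref{eq:p-general}.

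As a consistency check, for $S(t) = s^t$ we get $F(j) = (1-s^j)/q$, so
\[
  E[\ell] = \frac{1-g(s)}{q} = \frac{1-p}{q}.
\]
By Wald's identity, $E[T] = E[\ell]/p = (1-p)/(qp)$, which agrees with Corollary~\ref{cor:ET}.
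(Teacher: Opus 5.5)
Your proof is correct and is essentially the paper's own one-line argument: conditioning on $T_0=j$, the attempt reaches step $t\le j$ with probability $S(t-1)$, so $E[\ell\mid T_0=j]=\sum_{t=0}^{j-1}S(t)$, which is exactly your tail-sum computation for $\min(\tau,j)$. One aside in Step~3 is false and should be deleted: $F(j)\le j$ only gives $E[\ell]\le E[T_0]$, which may be infinite (if $\sum_t h_t<\infty$ then $S(t)\downarrow S_\infty>0$, so $F(j)\ge S_\infty j$ and $E[\ell]=\infty$ whenever $E[T_0]=\infty$), and this is why Proposition~\ref{prop:ET-general} assumes $E[\ell]<\infty$. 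A bound $h_t\ge c>0$ is actually what would guarantee finiteness, via $F(j)\le 1/c$; your Tonelli argument already covers the infinite case, so the identity itself is unaffected.
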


\begin{proof}
Condition on $T_0 = j$: step $t \leq j$ is reached with probability $S(t-1)$.
\end{proof}

\begin{proposition}[General expected completion time]\label{prop:ET-general}
Assume $E[\ell]=\sum_{j\ge1}F(j)\pi_j<\infty$. Then
\begin{equation}\label{eq:ET-general}
  E[T]=\frac{E[\ell]}{p}
  =\frac{\sum_{j\ge1}F(j)\pi_j}{\sum_{j\ge1}S(j)\pi_j}.
\end{equation}
\end{proposition}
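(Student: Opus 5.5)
The plan is to use the attempt decomposition~\eqref{eq:decomp}, which stays valid under any age-based mechanism, and then apply Wald's identity. Successive attempts are i.i.d.\ because each one restarts from the empty state. Let $\ell_1,\ell_2,\dots$ be the lengths of the successive attempts, and let $M=N+1$ be the index of the first successful attempt. Then $T=\sum_{i=1}^{M}\ell_i$. Each attempt succeeds independently with probability $p$ given by~\eqref{eq:p-general}, so $M\sim\Geom_1(p)$ and $E[M]=1/p$.

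The key observation is that $M$ is a stopping time for the filtration generated by the attempts. The event $\{M\ge i\}$ means that the first $i-1$ attempts all failed, so it depends only on those attempts and is independent of $\ell_i$. The $\ell_i$ are nonnegative, so Wald's identity (or the Tonelli computation $E[T]=\sum_{i\ge1}E[\ell_i\mathbf 1\{M\ge i\}]=\sum_i E[\ell]P(M\ge i)$) gives $E[T]=E[\ell]\,E[M]=E[\ell]/p$. This is valid without integrability caveats because all terms are nonnegative, and the assumption $E[\ell]<\infty$ only makes the result finite.

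Substituting Lemma~\ref{lem:attempt-length} for $E[\ell]$ and~\eqref{eq:p-general} for $p$ gives the displayed ratio. Two points need checking. First, $p>0$: this holds since $S(t)>0$ for all $t$ and $\sum\pi_j=1$, so some $\pi_j>0$. Second, success and failure must be well defined. An attempt whose base task needs $j$ steps succeeds exactly when it survives $j$ steps, which has probability $S(j)$. Failure requires a catastrophe at some step $t\le j$, which has probability $1-S(j)$. So the attempt length is at most $T_0$ and is finite almost surely.

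I expect the only delicate step to be the stopping-time independence, namely that $\{M\ge i\}$ is independent of $\ell_i$. This is where the i.i.d.\ restart structure is used, and I would state it explicitly. As a sanity check, the case $S(t)=s^t$ gives $F(j)=(1-s^j)/q$, so $E[\ell]=(1-g(s))/q$ and $E[T]=(1-p)/(qp)$, which matches Corollary~\ref{cor:ET}.
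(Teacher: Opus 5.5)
Your proposal is correct and follows essentially the same route as the paper, which also writes $T$ as a sum of $M=N+1\sim\Geom_1(p)$ i.i.d.\ attempt lengths, treats $M$ as a stopping time, and applies Wald's identity to get $E[T]=E[\ell]/p$, then substitutes Lemma~\ref{lem:attempt-length} and \eqref{eq:p-general}. Your additions (the Tonelli justification for nonnegative summands, the check that $p>0$, and the memoryless sanity check against Corollary~\ref{cor:ET}) are sound and fill in details the paper leaves implicit.
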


\begin{proof}
The pairs $(\ell_i,\mathrm{outcome}_i)$ are i.i.d., and $M=N+1\sim \Geom_1(p)$ is a stopping time. Since $E[\ell]<\infty$, Wald's identity applies and gives $E[T]=E[\ell]\cdot E[M]=\frac{E[\ell]}{p}$.
\end{proof}

Formula~\eqref{eq:ET-general} expresses $E[T]$ as a ratio of two linear functionals of the base distribution $(\pi_j)_{j \geq 1}$.  In the memoryless case, $S(t) = s^t$ makes the denominator $p = g(s)$---a PGF evaluation---and $E[T]$ depends on $T_0$ only through this single number.  For a general mechanism, $E[T]$ depends on the full interaction between $(\pi_j)$ and $S$.

\subsection{Geometric-Tail Catastrophe}\label{sec:geom-tail}

\begin{definition}[Geometric-tail catastrophe]\label{def:geom-tail}
A hazard sequence $(h_t)_{t \geq 1}$ has \emph{geometric tail} if there exist $a \in (0,1)$ and $q \in (0,1)$ such that
\[
  h_1 = 1-a, \qquad h_t = q \quad \text{for all } t \geq 2.
\]
Equivalently, $S(t) = a\,s^{t-1}$ for $t \geq 1$, where $s = 1-q$.  The parameter $a = S(1) = 1-h_1$ is the first-step survival probability.  The special case $a = s$ (i.e., $h_1 = q$) recovers memoryless catastrophe.
\end{definition}

The geometric-tail class has one degree of freedom beyond full memorylessness: the first-step hazard $h_1$ may differ from $q$.  The next proposition shows that every member of this class produces a PGF with affine structure, establishing the forward direction of the characterization.

\begin{proposition}[Geometric-tail PGF formula]\label{prop:geom-tail-pgf}
Under geometric-tail catastrophe with parameters $(a, q)$, the success probability is $p = (a/s)\,g(s)$ and the completion-time PGF is
\begin{equation}\label{eq:geom-tail-pgf}
  G_T(w) = \frac{a(1-u)\,g(u)}{s(1-w)(1 - w(s-a)) + aqw\,g(u)}, \qquad u = ws.
\end{equation}
This exhibits uniform affine PGF structure (Definition~\ref{def:affine}) with coefficient equality $\alpha_N(1) = \alpha_D(1) = aq$.  Setting $a = s$ recovers the memoryless formula~\eqref{eq:pgf}.
\end{proposition}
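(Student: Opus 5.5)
The plan is to repeat the derivation of Theorem~\ref{thm:pgf}, using the attempt decomposition~\eqref{eq:decomp}. That decomposition stays valid for any age-based mechanism, since each attempt starts from the empty state. Rather than forming $G_A$ and $G_B$ separately, I will write $G_T(w)=\Sigma(w)/(1-\Phi(w))$. Here $\Sigma(w)=E[w^{\ell};\text{success}]$ and $\Phi(w)=E[w^{\ell};\text{failure}]$ are the defective PGFs of one attempt split by outcome. This is the geometric-sum formula with the factors $p$ and $1-p$ absorbed.

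\textbf{Step 1 (success part).} An attempt succeeds at step $t$ exactly when $T_0=t$ and there is no catastrophe in steps $1,\dots,t$. This has probability $S(t)\pi_t=a s^{t-1}\pi_t$. Hence
\[
\Sigma(w)=\sum_{t\ge1} a s^{t-1}\pi_t w^t=\tfrac{a}{s}\,g(u), \qquad u=ws,
\]
and setting $w=1$ gives $p=(a/s)g(s)$, as in~\eqref{eq:p-general}.

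\textbf{Step 2 (failure part).} An attempt fails at step $j$ exactly when $T_0\ge j$, no catastrophe occurred in steps $1,\dots,j-1$, and a catastrophe occurs at step $j$. This has probability $P(T_0\ge j)\,S(j-1)h_j$. The cases split as follows:
\begin{itemize}
\item for $j=1$ the weight is $(1-a)$, since $P(T_0\ge1)=1$;
\item for $j\ge2$ the weight is $a q s^{j-2}P(T_0\ge j)$.
\end{itemize}
Reindexing with $i=j-1$ and using the tail identity $\sum_{i\ge0}u^iP(T_0>i)=(1-g(u))/(1-u)$ quoted before~\eqref{eq:GA}, I get
\[
\Phi(w)=(1-a)w+\frac{aqw}{s}\Bigl(\frac{1-g(u)}{1-u}-1\Bigr).
\]
The $-1$ removes the $i=0$ term.

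\textbf{Step 3 (clearing denominators).} I multiply numerator and denominator of $\Sigma/(1-\Phi)$ by $s(1-u)$. The numerator becomes $a(1-u)g(u)$. The denominator becomes
\[
s(1-u)\bigl(1-(1-a)w\bigr)-aqw(1-u)+aqw\,(1-u)-aqw\bigl(1-g(u)\bigr)+\dots
\]
Rather than tracking terms loosely, I will collect the $g(u)$-free part and check that it equals $s(1-w)(1-w(s-a))$. This is a polynomial identity in $w$ of degree $2$, using $u=ws$ and $q=1-s$, so it suffices to compare the coefficients of $1$, $w$ and $w^2$. I expect this bookkeeping to be the only real obstacle. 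As a sanity check, the $g$-free part must vanish at $w=1$, since $G_T(1)=1$ and $b(1)=0$ force $d(1)=0$; indeed $sq-aq\cdot s\cdot\ldots$ cancels there. The coefficient of $g(u)$ is visibly $aqw$.

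\textbf{Step 4 (structure).} Reading off~\eqref{eq:geom-tail-pgf} against Definition~\ref{def:affine}, I take $a(w)=a(1-ws)$, $b=0$, $c(w)=aqw$, $d(w)=s(1-w)(1-w(s-a))$ and $u(w)=ws$, with $u(1)=s\in(0,1)$. These coefficients are polynomials and do not depend on $T_0$. Non-degeneracy holds because $c(1)g(s)+d(1)=aq\,g(s)>0$. The coefficient equality is $a(1)=aq=c(1)$. Finally, putting $a=s$ gives $d(w)=s(1-w)$, and dividing through by $s$ recovers~\eqref{eq:pgf}.
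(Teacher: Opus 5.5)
Your proposal is correct and follows the paper's proof essentially verbatim: your $\Sigma$ and $\Phi$ are exactly the paper's $pG_B$ and $(1-p)G_A$, you split off the first-step hazard the same way, and you use the same tail identity (in the form $\sum_{i\ge1}u^iP(T_0>i)=(u-g(u))/(1-u)$) before clearing denominators. One small slip: the displayed denominator in Step~3 contains a spurious $-aqw(1-u)$; the correct expression is $s(1-u)\bigl(1-(1-a)w\bigr)+aqw(1-u)-aqw\bigl(1-g(u)\bigr)$, whose $g$-free part $s(1-ws)\bigl(1-(1-a)w\bigr)-aqsw^2$ does factor as $s(1-w)\bigl(1-w(s-a)\bigr)$, as your coefficient comparison would confirm.
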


\begin{proof}
Since $S(t) = as^{t-1}$, the success probability is $p = (a/s)\,g(s)$ and the successful-attempt PGF is $G_B(w) = a\,g(u)/(ps)$, where $u = ws$.

For the failed-attempt PGF, note that the only departure from the memoryless calculation (\S\ref{sec:completion}) is the first-step hazard $h_1 = 1-a \neq q$.  Splitting accordingly:
\[
  (1-p)\,G_A(w) = w(1-a) + \frac{aqw}{s}\sum_{j \geq 1}u^j\,P(T_0 > j).
\]
The identity $\sum_{j \geq 1}u^j P(T_0 > j) = (u - g(u))/(1-u)$ (used in \S\ref{sec:completion}) gives
\[
  (1-p)\,G_A(w) = w(1-a) + \frac{aqw(u - g(u))}{s(1-u)}.
\]
Substituting into $G_T = p\,G_B/[1-(1-p)G_A]$ and clearing denominators, the denominator simplifies (using $s + q = 1$ and $u = ws$) to $s(1-w)(1-w(s-a)) + aqw\,g(u)$, yielding~\eqref{eq:geom-tail-pgf}.  (Setting $a = s$ eliminates the $1-w(s-a)$ factor and recovers~\eqref{eq:pgf}.)

The $g(u)$-coefficients are $\alpha_N(w) = a(1-u)$ and $\alpha_D(w) = aqw$, giving $\alpha_N(1) = \alpha_D(1) = aq$.  Since the numerator has no $g$-free term ($\beta_N \equiv 0$) and $\beta_D(1) = 0$, we obtain $G_T(1) = 1$.  Non-degeneracy: $D(1) = aqg(s) = pqs > 0$.
\end{proof}

Since the geometric-tail PGF has the same affine-ratio form, the same linear shift $u=ws$, and coefficient matching at $w=1$, Lemma~\ref{lem:affine-ratio-rec} applies verbatim. Hence the DRP holds for geometric-tail mechanisms, with $\alpha_N(1)=\alpha_D(1)=aq$ replacing $q$.

\subsection{The Characterization Theorem}\label{sec:char-thm}

We now prove the converse: geometric-tail catastrophe is the \emph{only} age-based mechanism producing affine PGF structure.  The result takes the form of a four-way equivalence linking the survival function, the PGF structure, the derivative reduction property, and the success probability formula.

\begin{theorem}[Characterization]\label{thm:char}
Among age-based catastrophe mechanisms (Definition~\ref{def:hazard}), the following are equivalent:
\begin{enumerate}[\rm (A)]
  \item \emph{Geometric-tail catastrophe}: $S(t) = a\,s^{t-1}$ for $t \geq 1$ (Definition~\ref{def:geom-tail}).
  \item \emph{Uniform affine PGF structure}: Definition~\ref{def:affine}.
  \item \emph{Derivative reduction}: there exists $z_0 \in (0,1)$ such that for every $k \geq 1$ and every finitely-supported $T_0$, the $k$-th factorial moment $\eta_k$ depends on $T_0$ only through $\{g^{(j)}(z_0)\}_{j=0}^{k-1}$.
  \item \emph{Proportional single-point sufficiency}: there exist $z_0 \in (0,1)$ and $\lambda > 0$ such that $p = \lambda\,g(z_0)$ for all finitely-supported $T_0$.
\end{enumerate}
\end{theorem}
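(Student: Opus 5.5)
We prove the cycle (A)$\Rightarrow$(B)$\Rightarrow$(C)$\Rightarrow$(A), and separately (A)$\Leftrightarrow$(D). Both links into (A) reduce to showing $S(j)=c\,z_0^{\,j}$ for $j\ge 1$.

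\emph{(A)$\Rightarrow$(B).} This is Proposition~\ref{prop:geom-tail-pgf}.

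\emph{(B)$\Rightarrow$(C).} Put $z_*=1$ and $z_0=u(1)\in(0,1)$. By Definition~\ref{def:affine}, $G_T(1)=1$ for every admissible $g$, so
\[
a(1)\,x+b(1)=c(1)\,x+d(1)
\]
for every value $x=g(z_0)$. Point masses $T_0=\delta_j$ give $x=z_0^{\,j}$, which takes more than one value. Hence $a(1)=c(1)$ and $b(1)=d(1)$. Non-degeneracy gives $D(1)\neq0$, so Lemma~\ref{lem:affine-ratio-rec} applies with $h=g$. It shows $\eta_k=G_T^{(k)}(1)$ depends only on $\{g^{(j)}(z_0)\}_{j<k}$.

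\emph{(D)$\Rightarrow$(A).} Take $T_0=\delta_j$ in~\eqref{eq:p-general}. Then $p=S(j)=\lambda z_0^{\,j}$ for every $j\ge1$. Therefore $1-h_t=S(t)/S(t-1)=z_0$ for $t\ge2$, and $S(1)=\lambda z_0$. So (A) holds with $s=z_0$ and $a=\lambda z_0$; here $a\in(0,1)$ because $h_1\in(0,1)$. Conversely, (A) gives $p=(a/s)g(s)$ by Proposition~\ref{prop:geom-tail-pgf}, which is (D).

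\emph{(C)$\Rightarrow$(A), using only $k=1$.} This is the main step. Hypothesis (C) says $E[T]=\Phi(g(z_0))$ for some function $\Phi$, and Proposition~\ref{prop:ET-general} gives
\[
E[T]=\frac{\sum_j F(j)\pi_j}{\sum_j S(j)\pi_j}.
\]
Write $F_j=F(j)$ and $S_j=S(j)$.

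First, restrict to two-point laws $\pi=\theta\delta_i+(1-\theta)\delta_j$ with $i<j$. Along this family $x=g(z_0)=\theta z_0^{\,i}+(1-\theta)z_0^{\,j}$ is an affine bijection from $\theta\in[0,1]$ onto $[z_0^{\,j},z_0^{\,i}]$. Hence on that interval
\[
\Phi(x)=\frac{\theta F_i+(1-\theta)F_j}{\theta S_i+(1-\theta)S_j},
\]
which is a M\"obius function of $x$. The intervals $[z_0^{\,j+1},z_0^{\,j}]$ overlap in nondegenerate subintervals when consecutive pairs $(i,j)$, $(i,j+1)$ are chained. Two M\"obius maps agreeing on an interval coincide, so $\Phi(x)=(\gamma x+\delta)/(\alpha x+\beta)$ on all of $(0,z_0]$.

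Second, take $i=1<j$ and cross-multiply. This gives a polynomial identity in $\theta$ of degree at most $2$. I expect it to force
\[
(F_1,S_1)=\kappa_1\bigl(\gamma z_0+\delta,\ \alpha z_0+\beta\bigr),\qquad (F_j,S_j)=\kappa_j\bigl(\gamma z_0^{\,j}+\delta,\ \alpha z_0^{\,j}+\beta\bigr),
\]
with $\kappa_1=\kappa_j$.

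This is the main obstacle: a common-factor cancellation in the ratio could make the two representations differ. I will rule it out by comparing the coefficients of $\theta^2$, $\theta$ and $1$, using $S_j>0$. If needed I will add three-point mixtures $\delta_1,\delta_i,\delta_j$, which force the proportionality constant $\kappa$ to be the same for all $j$.

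Finally, with $S_j=\kappa(\alpha z_0^{\,j}+\beta)$ and $F_j=\kappa(\gamma z_0^{\,j}+\delta)$ for all $j\ge1$, use $F(j+1)-F(j)=S(j)$. This gives
\[
\kappa\gamma z_0^{\,j}(z_0-1)=\kappa(\alpha z_0^{\,j}+\beta)\qquad\text{for all } j\ge1.
\]
Since the $z_0^{\,j}$ are distinct, $\beta=0$. Hence $S(j)=\kappa\alpha\,z_0^{\,j}$, which is the geometric tail (A) with $s=z_0$ and $a=\kappa\alpha z_0$. The same formula gives (D) with $\lambda=\kappa\alpha$.
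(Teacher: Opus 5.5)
Your proposal is correct. It follows the paper for (A)$\Rightarrow$(B), for (A)$\Leftrightarrow$(D), and for the first half of (C)$\Rightarrow$(A): two-point laws give a M\"obius map, and these glue to a single $\Phi$ on $(0,z_0]$.

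Two small remarks on that first half. With $i=1$ fixed, the intervals $[z_0^{j},z_0]$ are nested, and that nesting is the chaining you need; the literal intervals $[z_0^{j+1},z_0^{j}]$ meet only in a point. Your (B)$\Rightarrow$(C) step uses two distinct values $z_0^j$ of $g(z_0)$ to get $a(1)=c(1)$ and $b(1)=d(1)$ at once. That is a cleaner version of the paper's $n\to\infty$ limit followed by back-substitution.

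The real difference is how you finish (C)$\Rightarrow$(A). The paper reads off only the ratio of leading coefficients, $(F(a)-F(b))/(S(a)-S(b))=r$, independent of the pair. From this it gets $F=rS+c$, then differences to obtain $h_t=-1/r$ for $t\ge 2$; it never relates $q$ to $z_0$. You instead parametrize $(F_j,S_j)=\kappa\,(\gamma z_0^j+\delta,\ \alpha z_0^j+\beta)$ with a common scale $\kappa$, and let $F(j+1)-F(j)=S(j)$ force $\beta=0$. This gives more: it shows $s=z_0$ and $\lambda=\kappa\alpha$. So you get (C)$\Rightarrow$(D) directly, and you see that the evaluation point in (C) must be the survival ratio.

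On your ``main obstacle'': positivity $S_j>0$ is not what gives $\kappa_1=\kappa_j$. Set $P_1=\gamma z_0+\delta$, $Q_1=\alpha z_0+\beta$, $P_0=\gamma z_0^j+\delta$, $Q_0=\alpha z_0^j+\beta$. Evaluating the cross-multiplied identity at $\theta=1$ and $\theta=0$ gives the two proportionalities. The remaining $\theta(1-\theta)$ coefficient gives
\[
(\kappa_j-\kappa_1)(P_1Q_0-P_0Q_1)=(\kappa_j-\kappa_1)(z_0-z_0^j)(\gamma\beta-\alpha\delta)=0 .
\]
So the ingredient you need is non-degeneracy, $\gamma\beta\neq\alpha\delta$. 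This also guarantees $(P_0,Q_0)\neq(0,0)$, so $\kappa_j$ is well defined. Non-degeneracy holds because $\Phi(z_0^j)=F(j)/S(j)$ is strictly increasing in $j$ ($F$ increases, $S$ strictly decreases), so $\Phi$ is not constant.

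Equivalently, $M_{1j}$ has determinant $(z_0-z_0^j)(F_1S_j-F_jS_1)\neq 0$. Equality of non-degenerate M\"obius maps then gives proportional coefficient vectors, and solving that proportionality yields your parametrization with a common $\kappa$ immediately. Three-point mixtures are unnecessary. The paper's Step 4 uses the same non-degeneracy without stating it, when it passes from $M_{ab}=M$ to proportional coefficients.
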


We prove the cycle (A)$\Rightarrow$(B)$\Rightarrow$(C)$\Rightarrow$(A), and independently prove (A)$\Leftrightarrow$(D).  The implications (A)$\Rightarrow$(B) and (A)$\Rightarrow$(D) have been established; the substance lies in (C)$\Rightarrow$(A), which uses only the $k = 1$ instance of (C) and proceeds by a M\"obius rigidity argument.

\begin{proof}[Proof of {\rm (A)$\Rightarrow$(B)}]
This is Proposition~\ref{prop:geom-tail-pgf}.
\end{proof}

\begin{proof}[Proof of {\rm (B)$\Rightarrow$(C)}]
Suppose
\[
  G_T(w)=\frac{a(w)g(u(w))+b(w)}{c(w)g(u(w))+d(w)},
  \qquad u(1)=u_1\in(0,1),
\]
with $u$ linear. From $G_T(1)=1$ for all finitely-supported $T_0$, taking $T_0\equiv n$ and letting $n\to\infty$ gives $b(1)=d(1)$, and then back-substitution gives $a(1)=c(1)$. Since non-degeneracy yields $c(1)g(u_1)+d(1)\neq 0$, Lemma~\ref{lem:affine-ratio-rec} applies with $R=G_T$, $h=g$, $z_*=1$, and $u_0=u_1$. Therefore the $k$-th factorial moment $\eta_k=G_T^{(k)}(1)$ depends only on $\{g^{(j)}(u_1)\}_{j=0}^{k-1}$.
\end{proof}

\begin{proof}[Proof of {\rm (C)$\Rightarrow$(A)}]
We use only the $k = 1$ case: $E[T]$ depends on $T_0$ only through $g(z_0)$ for some $z_0 \in (0,1)$.  By Proposition~\ref{prop:ET-general}, there exists $\psi: (0,1) \to \mathbb{R}$ such that
\begin{equation}\label{eq:psi-hyp}
  \frac{\sum_{j \geq 1} F(j)\,\pi_j}{\sum_{j \geq 1} S(j)\,\pi_j}
  = \psi\!\left(\sum_{j \geq 1} z_0^j\,\pi_j\right)
  \quad\text{for all finitely-supported } (\pi_j).
\end{equation}

\emph{Step 1: Point masses.}\;
Taking $\pi = \delta_j$ gives $\psi(z_0^j) = F(j)/S(j) =: f_j$ for each $j \geq 1$.

\emph{Step 2: Two-point distributions force $\psi$ to be M\"obius.}\;
For $\pi_a = \alpha$, $\pi_b = 1-\alpha$ with $a < b$, set $v = \alpha z_0^a + (1-\alpha)z_0^b$.  Substituting into~\eqref{eq:psi-hyp} and solving for $\alpha$ in terms of $v$ gives
\begin{equation}\label{eq:mobius-local}
  \psi(v) = \frac{(F(a)-F(b))\,v + F(b)z_0^a - F(a)z_0^b}{(S(a)-S(b))\,v + S(b)z_0^a - S(a)z_0^b}
  =: M_{ab}(v)
\end{equation}
on $[z_0^b, z_0^a]$.  This is a M\"obius transformation of $v$.

\emph{Step 3: Global consistency.}\;
For $a < c < b$, the M\"obius transformations $M_{ab}$ and $M_{ac}$ agree on the interval $[z_0^c, z_0^a]$, which contains infinitely many points; since a non-degenerate M\"obius transformation is determined by three values, $M_{ab} = M_{ac}$ as rational functions. For any two pairs $(a_1,b_1)$, $(a_2,b_2)$ with $a_i<b_i$, setting $a^*=\min(a_1,a_2)$, $b^*=\max(b_1,b_2)$ gives $[z_0^{b_i},z_0^{a_i}] \subset [z_0^{b^*},z_0^{a^*}]$ for each $i$ (since $z_0\in(0,1)$ makes $j\mapsto z_0^j$ decreasing), so $M_{a_1 b_1} = M_{a^*b^*} = M_{a_2 b_2}$.  Hence all $M_{ab}$ coincide with a single 
\begin{equation}\label{eq:global-mobius}
  \psi(v) = M(v) = \frac{Av + B}{Cv + D} \qquad \text{on } (0, z_0].
\end{equation}
The denominator slope of each $M_{ab}$ is proportional to $S(a) - S(b) \neq 0$ (since $h_t \in (0,1)$ makes $S$ strictly decreasing), so $C \neq 0$.

\emph{Step 4: Coefficient consistency forces $F = rS + c$.}\;
Since $M_{ab} = M$ up to a common scalar, $A/C = (F(a)-F(b))/(S(a)-S(b))$ is constant across all $a \neq b$.  Denoting this constant by $r$, we obtain $F(j) = rS(j) + c$ for all $j \geq 1$.

\emph{Step 5: Differencing yields constant hazard from step $2$ onward.}\;
From $F(j) = \sum_{t=0}^{j-1}S(t)$: $F(j+1) - F(j) = S(j)$.  From $F = rS + c$: $F(j+1) - F(j) = r(S(j+1) - S(j)) = -rS(j)h_{j+1}$.  Equating and dividing by $S(j) > 0$:
\[
  h_{j+1} = -1/r \quad\text{for all } j \geq 1.
\]
Hence $h_t = q := -1/r$ for all $t \geq 2$, with $r < -1$ (since $q \in (0,1)$).  Setting $a = S(1) \in (0,1)$ gives $S(t) = as^{t-1}$ for $t \geq 1$, confirming geometric-tail catastrophe.
\end{proof}

\begin{proof}[Proof of {\rm (A)$\Rightarrow$(D)}]
If $S(t) = as^{t-1}$, then $p = \sum_{t \geq 1}as^{t-1}\pi_t = (a/s)\,g(s)$, with $z_0 = s$ and $\lambda = a/s > 0$.
\end{proof}

\begin{proof}[Proof of {\rm (D)$\Rightarrow$(A)}]
The hypothesis $p = \lambda\,g(z_0)$ for all finitely-supported $(\pi_j)$ gives, upon taking $\pi = \delta_t$: $S(t) = \lambda\,z_0^t$ for each $t \geq 1$.  Hence $h_t = 1 - S(t)/S(t-1) = 1 - z_0$ for all $t \geq 2$, confirming geometric tail with $s = z_0$, $q = 1-z_0$, and $a = \lambda z_0$.
\end{proof}


\begin{remark}[Emphasis of the (C)$\Rightarrow$(A) argument]\label{rem:mobius}
Only the $k=1$ instance of~(C) is needed: the condition that $E[T]$ depends on $T_0$ through $g(z_0)$ alone already forces geometric-tail catastrophe.  The mechanism is M\"obius rigidity: the ratio of two linear functionals of $(\pi_j)$ equaling a function of a third forces that function to be linear fractional, and determination by three points propagates the local constraint globally.  This is the same low-dimensionality phenomenon that drives the DRP---there, affine dependence on $g(u)$ forces the highest derivative to cancel; here, it forces the hazard sequence to stabilize.
\end{remark}

\subsection{From Geometric Tail to Full Memorylessness}\label{sec:full-memoryless}

The geometric-tail class has one free parameter ($a = S(1)$) beyond the catastrophe rate $q$.  The following corollary identifies the algebraic condition that pins down this parameter.

\begin{corollary}[Full memorylessness]\label{cor:full-memoryless}
Among geometric-tail mechanisms, full memorylessness ($h_t \equiv q$ for all $t \geq 1$) is equivalent to $\lambda = 1$ in condition~(D) of Theorem~\ref{thm:char}, i.e., the success probability is a literal PGF evaluation:
\[
  p = g(s).
\]
\end{corollary}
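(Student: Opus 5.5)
The proof only needs the explicit form of the geometric-tail success probability (Proposition~\ref{prop:geom-tail-pgf}) together with the point-mass argument already used in the proof of (D)$\Rightarrow$(A) in Theorem~\ref{thm:char}.

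First we have to say what "$\lambda$" means, since condition~(D) involves the pair $(z_0,\lambda)$. Suppose $p=\lambda g(z_0)$ for all finitely-supported $T_0$. Taking $T_0=\delta_t$ gives $S(t)=\lambda z_0^t$ for every $t\ge1$. The ratio $S(2)/S(1)=z_0$ is forced, and then $\lambda=S(1)/z_0$ is forced as well. So the representation in~(D) is unique, and for a geometric-tail mechanism with parameters $(a,q)$ it is $z_0=s$ and $\lambda=a/s$, exactly as computed in the proof of (A)$\Rightarrow$(D). The uniqueness matters: without it, "$\lambda=1$" would not be a well-defined property of the mechanism.

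Forward direction. If $h_t\equiv q$, then $S(t)=s^t$, so $a=S(1)=s$. Hence $\lambda=a/s=1$ and $p=g(s)$, which is \eqref{eq:p}. This is also the specialization $a=s$ of Proposition~\ref{prop:geom-tail-pgf}.

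Converse. Suppose the mechanism has geometric tail and $\lambda=1$. By uniqueness, $a/s=1$, so $a=s$. Then $h_1=1-a=q$, and since $h_t=q$ for all $t\ge2$ by Definition~\ref{def:geom-tail}, we get $h_t\equiv q$.

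We can make the statement slightly stronger. Starting only from the displayed identity $p=g(s)$ for all finitely-supported $T_0$, point masses give $S(t)=s^t$ for every $t$. This yields $h_1=1-S(1)=q$ and $h_t=1-S(t)/S(t-1)=q$ for $t\ge2$, so full memorylessness follows without assuming geometric tail.

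The only delicate step is the well-definedness of $\lambda$, i.e.\ that $(z_0,\lambda)$ in~(D) is determined by the mechanism. This requires point masses at at least two distinct times ($t=1,2$), which the hypothesis "all finitely-supported $T_0$" provides. Beyond that, the argument is bookkeeping.
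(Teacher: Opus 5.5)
Your proof is correct and follows the paper's argument: geometric tail gives $p=(a/s)\,g(s)$, so $\lambda=a/s$, and $\lambda=1 \iff a=s \iff h_1=q$. Your point-mass check that $(z_0,\lambda)$ is uniquely determined by $S$ (via $S(t)=\lambda z_0^t$ at $t=1,2$) makes explicit a well-definedness step the paper leaves implicit, and your strengthening that $p=g(s)$ alone forces $h_t\equiv q$ is also valid.
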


\begin{proof}
Geometric tail gives $p = (a/s)\,g(s)$, so $\lambda = a/s$.  If $\lambda = 1$ then $a = s$, hence $h_1 = 1-a = 1-s = q$, giving full memorylessness.  Conversely, memorylessness gives $a = s$ and $\lambda = 1$.
\end{proof}

\begin{example}[Geometric-tail but not memoryless]\label{ex:geom-tail}
Set $h_1 = 1/2$ and $h_t = 1/4$ for $t \geq 2$, so $a = 1/2$, $q = 1/4$, $s = 3/4$.  Then $S(t) = (1/2)(3/4)^{t-1}$ and $p = (2/3)\,g(3/4)$.  Since $h_1 = 1/2 \neq 1/4 = q$, this is not memoryless, yet it has uniform affine PGF structure by Proposition~\ref{prop:geom-tail-pgf}, with $\alpha_N(1) = \alpha_D(1) = aq = 1/8$.  This confirms that affine PGF structure characterizes the geometric-tail class, not full memorylessness.
\end{example}

\begin{remark}[Proportionality is essential]\label{rem:proportionality}
Condition~(D) requires $p = \lambda\,g(z_0)$---proportionality, not merely functional dependence.  If one weakens~(D) to ``$p = \varphi(g(z_0))$ for some function $\varphi$,'' then two-point distributions force $\varphi$ to be affine: $\varphi(x) = \lambda x + \mu$.  When $\mu > 0$, the resulting survival function $S(j) = \lambda z_0^j + \mu$ has $h_j \to 0$ as $j \to \infty$, violating both~(B) and~(C).  Thus proportionality ($\mu = 0$) is necessary for the equivalence.
\end{remark}

\section{Exponential Approximation}\label{sec:exp-approx}

The attempt decomposition (Theorem~\ref{thm:pgf}) writes $T = \sum_{i=1}^N A_i + B$ as a geometric sum plus a perturbation.  A classical theorem of~\cite{Renyi56} establishes that normalized geometric sums converge to the exponential distribution; see~\cite{GK96, Kalashnikov97} for comprehensive treatments.  \S\ref{sec:completion} revealed a specific algebraic structure---the coefficient equality $\alpha_N(1) = \alpha_D(1)$---governing the PGF of $T$.  We now show that this structure sharpens R\'enyi's result into:
\begin{enumerate}[(i)]
  \item a quantitative Laplace expansion whose leading error is second-order (the first-order term vanishes as an algebraic identity, not an asymptotic cancellation), and
  \item a sharp two-sided Kolmogorov bound $d_K(T/E[T], \Exp(1)) \asymp p + |\alpha|$ that closes a logarithmic gap left by the smoothing inequality approach.
\end{enumerate}
Both results hold for arbitrary base processes $T_0$ and are driven by the same coefficient equality.

\subsection{The Perturbation Parameter}\label{sec:alpha}

Recall the notation: $\mu = E[T] = (1-p)/(qp)$, $b = E[B]$, and write $W = T/\mu$ for the normalized completion time.  From the decomposition~\eqref{eq:decomp}, $W = U/\mu + B/\mu$ where $U = \sum_{i=1}^N A_i$.

\begin{definition}[Perturbation parameters]\label{def:alpha-beta}
Define
\begin{equation}\label{eq:alpha-beta}
  \alpha = \frac{s(p - qD_1)}{1-p}, \qquad \beta = \frac{1 - qp - qsD_1}{1-p},
\end{equation}
where $D_1 = g'(s)$.
\end{definition}

\begin{proposition}[Properties of the perturbation parameters]\label{prop:alpha-properties}
\mbox{}
\begin{enumerate}[\rm (a)]
  \item \label{lem:identity}\emph{Fundamental identity}: $1 + \alpha - \beta = 0$.
  \item \label{lem:b-mu}\emph{Successful-attempt identity}: $b/\mu = sp/(1-p) - \alpha = qsD_1/(1-p)$.
  \item \label{lem:alpha-vanish}\emph{Vanishing rate}: for fixed $q \in (0,1)$, $|\alpha| = O_q(p|\ln p|)$ as $p \to 0$.
\end{enumerate}
\end{proposition}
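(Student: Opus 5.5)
Parts (a) and (b) are direct algebra from Definition~\ref{def:alpha-beta}, Corollary~\ref{cor:ET} and \eqref{eq:GB}. Part (c) reduces to a single elementary estimate on $D_1 = g'(s)$ in terms of $p = g(s)$.

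For (a), I put everything over the common denominator $1-p$:
\[
(1-p)(1+\alpha-\beta) = (1-p) + s p - sqD_1 - 1 + qp + qsD_1 = p(s+q-1) = 0 .
\]
The $D_1$-terms cancel identically, and $s+q=1$ kills the rest. This is the same coefficient match $\alpha_N(1)=\alpha_D(1)=q$ seen through $D'(1)$ in \eqref{eq:D-derivs}.

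For (b), I differentiate \eqref{eq:GB} at $w=1$ to get $b = E[B] = G_B'(1) = sD_1/p$. Dividing by $\mu=(1-p)/(qp)$ gives $b/\mu = qsD_1/(1-p)$. Separately, $sp/(1-p)-\alpha = \bigl(sp - sp + sqD_1\bigr)/(1-p)$, which is the same quantity.

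For (c), write $|\alpha| \le \frac{s}{1-p}\,(p + qD_1)$. For $p\le 1/2$ the factor $1/(1-p)$ is bounded and the $p$ term is $O(p)$, so it suffices to show $D_1 = O_q(p|\ln p|)$. Here $D_1 = \sum_{t\ge1} t\,\pi_t s^{t-1}$ and $p=\sum_t \pi_t s^t$. Choose a cutoff $t_0 = \lceil \ln(1/p)/\ln(1/s)\rceil$, so that $s^{t_0}\le p$ and $t_0 = O_q(|\ln p|)$.
\begin{itemize}
\item For $t\le t_0$, use $t s^{t-1}\le (t_0/s)\,s^t$. This low range then contributes at most $(t_0/s)\,p = O_q(p|\ln p|)$.
\item For $t>t_0$, use $\pi_t\le 1$ and bound by the explicit series $\sum_{t>t_0} t s^{t-1} = s^{t_0}\bigl(t_0/q + 1/q^2\bigr) + O(\cdot)$. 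Since $s^{t_0}\le p$, this tail is at most $p\,(t_0/q + 1/q^2) = O_q(p|\ln p|)$.
\end{itemize}
Summing the two ranges gives the claim.

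The only real step is (c). Its main subtlety is that $D_1$ must be controlled by $p$ uniformly over all base distributions $T_0$. Mass of $T_0$ at large $t$ can make $D_1/p$ large, so the crude bound $D_1\le p\cdot\sup_t t/s$ fails. The cutoff at the scale where $s^{t}\approx p$ handles this. Below the cutoff we compare with $p$ term by term; above it we discard $\pi_t$ entirely and use only geometric decay, and the logarithm comes from $t_0$. I expect no difficulty beyond keeping constants dependent only on $q$.
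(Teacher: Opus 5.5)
Parts (a) and (b) are the same computations the paper does. For (c) you take a genuinely different route.

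The paper sets $Y=s^{T_0}$, so that $p=E[Y]$ and $sD_1=E[T_0s^{T_0}]=E[-Y\ln Y]/\ln(1/s)$. It then applies Jensen to the concave map $x\mapsto -x\ln x$ and gets $sD_1\le p\ln(1/p)/\ln(1/s)$ in one line. You instead truncate at the scale $t_0\approx\log_{1/s}(1/p)$ where $s^{t}\approx p$. Below the cutoff you compare termwise with $p$; above it you use only geometric decay.

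Your argument is correct. The tail series is in fact exactly $\sum_{t>t_0}t\,s^{t-1}=s^{t_0}(t_0/q+1/q^2)$, so no $O(\cdot)$ term is needed. Your route needs no convexity and shows clearly where the logarithm comes from.

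What Jensen buys is a sharper, non-asymptotic bound. It holds for every $p\in(0,1)$ and is an equality for deterministic $T_0$. Since $q/\ln(1/s)\le 1$, it also yields $qsD_1\le p\ln(1/p)$ uniformly in $q$.

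Your tail estimate replaces $\pi_t$ by $1$, discarding $\sum_t\pi_t=1$, and so picks up constants of order $1/q$. This is harmless for fixed $q$, which is all the statement requires. It would matter in the variable-$q$ regime of Remark~\ref{rem:variable-q}. Bounding the tail instead by $\sup_{t>t_0}t\,s^{t-1}\le (t_0+1)s^{t_0}$, valid once $p\le e^{-1}$, would restore a $q$-uniform constant.

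The Jensen argument also transfers verbatim to continuous time, which is how the paper proves Lemma~\ref{lem:ct-alpha}.
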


\begin{proof}
(a) $1 + \alpha - \beta = 1 + [sp - qsD_1 - 1 + qp + qsD_1]/(1-p) = 1 + (p-1)/(1-p) = 0$, noting that $s + q = 1$.

(b) Differentiating $G_B(w) = g(ws)/p$ at $w = 1$ gives $b = sD_1/p$, so $b/\mu = qsD_1/(1-p)$.  The first equality follows from the definition of $\alpha$.

(c) Set $Y = s^{T_0} \in (0,1]$, so $p = E[Y]$ and $qsD_1 = qE[-Y\ln Y]/|\ln s|$.  Since $\varphi(x) = -x\ln x$ is concave, Jensen gives $E[-Y\ln Y] \leq -p\ln p$, whence $qsD_1 = O_q(p|\ln p|)$.  Both terms in $\alpha = (sp - qsD_1)/(1-p)$ are therefore $O_q(p|\ln p|)$.
\end{proof}

Identity~(a) is the Laplace-domain restatement of the coefficient equality~\eqref{eq:coeff-match}: both arise from $s + q = 1$.  Its consequence is that the first-order error in the exponential approximation vanishes exactly (Theorem~\ref{thm:laplace}).

\begin{remark}[Interpretation of $\alpha$]\label{rem:alpha-interp}
Identity~(b) shows that $\alpha$ measures the deviation of the successful attempt from a ``geometric target'': $\alpha = sp/(1-p) - b/\mu$.  If $\alpha = 0$, the rescaled successful attempt $B/\mu$ has exactly the expected length one would predict from a pure geometric model; if $\alpha \neq 0$, $B$ is longer (when $\alpha < 0$) or shorter (when $\alpha > 0$) than this target.  As we shall see, the Kolmogorov distance decomposes into two independent error sources: a lattice error $\Theta(p)$ from the integrality of the attempt count, and a perturbation error $\Theta(|\alpha|)$ from this mismatch.
\end{remark}

\subsection{The Laplace Expansion}\label{sec:laplace}

\begin{theorem}[Laplace expansion]\label{thm:laplace}
Fix $q \in (0,1)$.  There exist $p_0(q), \epsilon_0(q) > 0$ such that for all $p \leq p_0$ and $\theta > 0$ satisfying $\epsilon := \theta pq/(1-p) \leq \epsilon_0$,
\begin{equation}\label{eq:laplace-expansion}
  E[e^{-\theta W}] - \frac{1}{1+\theta}
  = \frac{\alpha\,\theta^2}{(1+\theta)^2}
  + O\!\left(\frac{|\alpha|^2\,\theta^3}{(1+\theta)^3}\right)
  + O\!\left(\frac{p\,\theta^2}{1+\theta}\right),
\end{equation}
with implicit constants depending only on $q$.
\end{theorem}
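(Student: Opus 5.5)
The plan is to work directly with the closed form of Theorem~\ref{thm:pgf}, evaluated at $w = e^{-\theta/\mu}$ with $\mu = (1-p)/(qp)$. Write $t = \theta/\mu = \epsilon$, since $\theta/\mu = \theta pq/(1-p)$. Then $E[e^{-\theta W}] = G_T(e^{-t})$. Using the rewriting from the proof of Corollary~\ref{cor:ET},
\[
  1 - G_T(w) = \frac{(1-w)\bigl(1-g(u)\bigr)}{1-w+qw\,g(u)}, \qquad u = ws,
\]
it suffices to expand numerator and denominator in $t$ to second order. The key input is that $g$ enters only through $g(se^{-t})$. For $0\le t\le \epsilon_0$ we have the exact Taylor representation
\[
  g(se^{-t}) = p - t\,sD_1 + R(t),
\]
where $R(t) = \int_0^t (t-\tau)\,\tfrac{d^2}{d\tau^2} g(se^{-\tau})\,d\tau$ and $\tfrac{d^2}{d\tau^2}g(se^{-\tau}) = E[T_0^2 (se^{-\tau})^{T_0}]$.

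\textbf{Step 1 (bound the remainder via the first-moment-type quantities).} I would bound $E[T_0^2 (se^{-\tau})^{T_0}] \le E[T_0^2 s^{T_0}]$. As in the proof of Proposition~\ref{prop:alpha-properties}(c), set $Y = s^{T_0}$, so that $T_0^2 Y = Y\ln^2 Y/\ln^2 s$. Concavity-type bounds (for instance $x\ln^2 x \le C$, or Jensen applied to $x\mapsto x\ln^2 x$ on the region where it is concave) should give $E[T_0^2 s^{T_0}] = O_q(p\ln^2 p + 1)$. The crude bound $O_q(1)$ may already suffice once multiplied by $t^2 = \epsilon^2$, because $\epsilon^2 \lesssim p^2\theta^2$, and $p^2\theta^2 \le p\theta^2 \cdot p$ fits inside $O(p\theta^2/(1+\theta))$ provided $\theta \lesssim 1/p$. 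That condition is guaranteed by $\epsilon\le\epsilon_0$. So $R(t) = O(t^2)$ uniformly.

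\textbf{Step 2 (algebra).} Substitute $1-w = t - t^2/2 + O(t^3)$ and $g(u) = p - tsD_1 + O(t^2)$. Then:
\begin{itemize}
  \item The numerator is $(1-w)(1-g(u)) = t(1-p) + t^2\bigl(sD_1 - (1-p)/2\bigr) + O(t^3)$.
  \item The denominator is $1-w+qwg(u) = qp + t\bigl(1 - qp - qsD_1\bigr) + O(t^2) = qp + t(1-p)\beta + O(t^2)$.
\end{itemize}
Dividing numerator and denominator by $qp$ and using $t/(qp) = \theta/(1-p)$ gives
\[
  1 - G_T = \frac{\theta\,(1 + t\,c_1 + \cdots)}{1 + \theta\beta + O(t\theta)},
\]
where $c_1$ collects $sD_1/(1-p) - 1/2$. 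The crucial point is that the identity $\beta = 1 + \alpha$ (Proposition~\ref{prop:alpha-properties}(a)) turns the leading term into $\theta/(1+\theta+\alpha\theta)$, which equals $\theta/(1+\theta)$ plus a perturbation in $\alpha$. Expanding in $\alpha\theta/(1+\theta)$ (valid once $|\alpha|$ is small, which follows from $p\le p_0$ by Proposition~\ref{prop:alpha-properties}(c)) gives
\[
  1 - G_T = \frac{\theta}{1+\theta} - \frac{\alpha\theta^2}{(1+\theta)^2} + O\!\left(\frac{\alpha^2\theta^3}{(1+\theta)^3}\right).
\]
Since $1 - \theta/(1+\theta) = 1/(1+\theta)$, this yields the main terms of \eqref{eq:laplace-expansion}.

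\textbf{Step 3 (the error terms).} The $t$-corrections must be absorbed into $O(p\theta^2/(1+\theta))$. A correction of the form $\theta\cdot t\cdot(\text{bounded})/(1+\theta)$ equals $\theta^2 pq/((1-p)(1+\theta))$ times a bounded factor, which is exactly $O(p\theta^2/(1+\theta))$.

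\textbf{Main obstacle.} The subtle point is that $sD_1/(1-p)$ appears in $c_1$ and is not obviously bounded, since $sD_1$ could be as large as $O(p|\ln p|/q)$ or even larger in absolute terms. Likewise, the $O(t^2)$ term in the denominator carries $E[T_0^2 s^{T_0}]$, which must be kept relative to $qp$ rather than to $1$. I would handle both by bounding the $t$-dependent parts relative to the leading quantities. Concretely, $t\cdot sD_1 \le t\,E[T_0 s^{T_0}]$, and $t\,T_0 s^{T_0}$ is uniformly bounded through $\sup_x x s^x$, so these corrections are bounded in terms of $\epsilon$. Making these bounds uniform in $T_0$, so that the constants depend only on $q$, is where I expect the real work.
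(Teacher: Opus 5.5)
Your proposal is correct and is essentially the paper's argument. You expand the closed-form PGF at $w=e^{-\theta/\mu}$ to first order with $O(\epsilon^2)$ remainders, normalize by $qp$, invoke $\beta=1+\alpha$, and expand in $\alpha\theta/(1+\theta)$; working with the factored form of $1-G_T$ only builds the $\theta^1$ cancellation into the numerator, instead of extracting it from $1+\alpha-\beta=0$ after subtracting $1/(1+\theta)$. The ``main obstacle'' you flag is not one: $sD_1=E[T_0s^{T_0}]\le\sup_{x\ge1}xs^x$ and $E[T_0^2s^{T_0}]\le\sup_{x\ge1}x^2s^x$ are already uniform in $T_0$, which is the same device as the paper's bound $s^kD_k\le\max_t t^ks^t=(k/(e|\ln s|))^k$ that makes all constants depend only on $q$.
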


\begin{proof}
Set $\epsilon = \theta/\mu = \theta pq/(1-p)$ and $w = e^{-\epsilon}$.  Then $E[e^{-\theta W}] = G_T(w)$.  We expand the numerator $N(w)$ and denominator $D(w)$ of~\eqref{eq:pgf} around $w = 1$.

Using $w = 1 - \epsilon + O(\epsilon^2)$, $u = ws = s - s\epsilon + O(\epsilon^2)$, and $g(u) = p - sD_1\epsilon + O(\epsilon^2)$:

\emph{Numerator.}\; $g(u)(1-u) = (p - sD_1\epsilon + O(\epsilon^2))(q + s\epsilon + O(\epsilon^2)) = pq + (ps - qsD_1)\epsilon + O(\epsilon^2)$.

\emph{Denominator.}\; $1-w+qwg(u) = \epsilon + O(\epsilon^2) + q(1-\epsilon+O(\epsilon^2))(p-sD_1\epsilon+O(\epsilon^2)) = qp + (1-qp-qsD_1)\epsilon + O(\epsilon^2)$.

Dividing both by $pq$ and substituting $\epsilon = \theta pq/(1-p)$:
\begin{equation}\label{eq:ratio}
  G_T(w) = \frac{1 + \alpha\theta + R_N(\theta)}{1 + \beta\theta + R_D(\theta)},
\end{equation}
where $|R_N|, |R_D| = O(\theta^2 p)$ (with implicit constants bounded for fixed $q$, since $s^k D_k \leq \max_{t \geq 1} t^k s^t = (k/(e|\ln s|))^k$). 

By Proposition~\ref{prop:alpha-properties}~\eqref{lem:identity}, $\beta = 1 + \alpha$.  Computing:
\[
  \frac{1 + \alpha\theta}{1 + \beta\theta} - \frac{1}{1+\theta}
  = \frac{(1+\alpha-\beta)\theta + \alpha\theta^2}{(1+\beta\theta)(1+\theta)}
  = \frac{\alpha\theta^2}{(1+\beta\theta)(1+\theta)}.
\]
Since $\beta = 1 + \alpha$: $1/(1+\beta\theta) = 1/((1+\theta)(1 + \alpha\theta/(1+\theta)))$, giving
\[
  \frac{\alpha\theta^2}{(1+\beta\theta)(1+\theta)}
  = \frac{\alpha\theta^2}{(1+\theta)^2} + O\!\left(\frac{|\alpha|^2\theta^3}{(1+\theta)^3}\right).
\]

It remains to bound the contribution of the remainders $R_N, R_D$ to the ratio~\eqref{eq:ratio}.  Writing $G_T(w) - 1/(1+\theta)$ and using $1 + \alpha - \beta = 0$, the numerator becomes $\alpha\theta^2 + (1+\theta)R_N - R_D$.  For the denominator: $1 + \beta\theta + R_D = (1+\theta)(1 + \alpha\theta/(1+\theta) + R_D/(1+\theta))$.  Taking $p_0$ small enough that $|\alpha| \leq 1/4$ and $\epsilon_0 \leq q/4$, the parenthetical factor is at least $1/2$, so $|1+\beta\theta+R_D| \geq (1+\theta)/2$.  The numerator remainder satisfies $|(1+\theta)R_N - R_D| = O((1+\theta)\theta^2 p)$, giving a contribution $O(p\theta^2/(1+\theta))$.
\end{proof}

\begin{remark}[Algebraic origin of the $\theta^1$ cancellation]\label{rem:theta1}
The identity $1 + \alpha - \beta = 0$ is equivalent to $E[W] = 1$ (matching first moments).  Its algebraic root is $s + q = 1$, the same identity underlying the coefficient equality $\alpha_N(1) = \alpha_D(1)$ that drives the DRP.  This is an exact algebraic identity, not an asymptotic cancellation---a structural guarantee that the exponential approximation begins at second order for any base process.
\end{remark}

\begin{corollary}[Exponential limit]\label{cor:exp-limit}
Fix $q \in (0,1)$.  Under memoryless catastrophe with rate $q$, $T/E[T] \xrightarrow{d} \Exp(1)$ as $p = g(s) \to 0$.
\end{corollary}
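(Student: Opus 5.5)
The plan is to deduce Corollary~\ref{cor:exp-limit} from Theorem~\ref{thm:laplace} via the continuity theorem for Laplace transforms of nonnegative random variables. Since $W = T/E[T] \geq 0$, it suffices to show that $E[e^{-\theta W}] \to 1/(1+\theta)$ for every fixed $\theta > 0$ as $p \to 0$, with $q$ fixed. Convergence of Laplace transforms on $(0,\infty)$ to the transform of a probability measure, here $\Exp(1)$, implies convergence in distribution.

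First, fix $\theta > 0$. Take $p_0(q)$ and $\epsilon_0(q)$ from Theorem~\ref{thm:laplace}. Because $\epsilon = \theta p q/(1-p) \to 0$ as $p \to 0$, for all sufficiently small $p$ we have both $p \leq p_0$ and $\epsilon \leq \epsilon_0$. The expansion~\eqref{eq:laplace-expansion} then applies and gives
\[
\Bigl| E[e^{-\theta W}] - \tfrac{1}{1+\theta} \Bigr| \;\leq\; |\alpha|\,\theta^2 + C_q\,|\alpha|^2\theta^3 + C_q\, p\,\theta^2 .
\]
Second, invoke Proposition~\ref{prop:alpha-properties}\eqref{lem:alpha-vanish}, which gives $|\alpha| = O_q(p|\ln p|) \to 0$. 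For fixed $\theta$, every term on the right-hand side therefore tends to zero as $p \to 0$.

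The main subtlety is that "$p \to 0$" refers to an arbitrary family (or sequence) of base processes $T_0$ with $g(s) \to 0$. For this reason the bounds must be uniform in $T_0$ and depend only on $p$ and $q$. This uniformity holds because the implicit constants in Theorem~\ref{thm:laplace} depend only on $q$, and because the bound on $\alpha$ in Proposition~\ref{prop:alpha-properties} depends only on $p$ and $q$. Once this is checked, pointwise convergence of Laplace transforms follows along any such sequence, and the continuity theorem completes the argument.
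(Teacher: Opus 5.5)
Your proposal is correct and follows the paper's proof essentially verbatim. You apply Theorem~\ref{thm:laplace} at each fixed $\theta>0$ once $\epsilon=\theta pq/(1-p)\le\epsilon_0$, use Proposition~\ref{prop:alpha-properties}(c) to get $|\alpha|\to 0$, and conclude by the continuity theorem for Laplace transforms. You also state explicitly what the paper leaves implicit: the constants depend only on $q$, so the argument works along any family of base processes with $g(s)\to 0$.
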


\begin{proof}
By Proposition~\ref{prop:alpha-properties}~\eqref{lem:alpha-vanish}, $|\alpha| \to 0$.  For each fixed $\theta > 0$, the condition $\epsilon = \theta pq/(1-p) \leq \epsilon_0$ is eventually satisfied, and both error terms in~\eqref{eq:laplace-expansion} vanish.  Hence $E[e^{-\theta W}] \to 1/(1+\theta)$ pointwise.  Pointwise convergence of Laplace transforms on $(0,\infty)$ implies convergence in distribution~\cite{GK96}.
\end{proof}

\begin{remark}[Variable $q$ regime]\label{rem:variable-q}
Corollary~\ref{cor:exp-limit} assumes fixed $q$.  When $q \to 0$ simultaneously with $p \to 0$ (as in the CCP, where $q = m/(n+m) \to 0$), the implicit constants in~\eqref{eq:laplace-expansion} may deteriorate.  The convergence $T/E[T] \xrightarrow{d} \Exp(1)$ in such regimes follows instead from the Bridge Theorem below (Theorem~\ref{thm:bridge}), which requires only $p \to 0$ and uniform control of certain moment ratios of $A$.
\end{remark}


\subsection{The Bridge Theorem}\label{sec:bridge}

The Laplace expansion (Theorem~\ref{thm:laplace}) gives pointwise control on the Laplace transform error.  Converting to Kolmogorov distance $\dK$ via the smoothing inequality (\cite{Kalashnikov97}) yields
\[
  \dK \lesssim \int_0^R \frac{|\alpha|\theta}{(1+\theta)^2}\,d\theta + \frac{1}{R},
\]
which evaluates to $O(|\alpha|\ln(1/|\alpha|))$ upon optimizing $R$.  The logarithmic factor arises because the integrand decays only as $|\alpha|/\theta$ for large $\theta$, accumulating a logarithmic contribution absent from the pointwise bound.

We now develop a direct probabilistic approach that avoids this loss entirely.  The key idea is to decompose the approximation into three steps, each handling one component of $W = U/\mu + B/\mu$.

\begin{lemma}[Failed-attempt tail bound]\label{lem:tail}
Under memoryless catastrophe with rate $q$, the failed-attempt length satisfies $P(A \geq t) \leq s^{t-1}$ for all $t \geq 1$.
\end{lemma}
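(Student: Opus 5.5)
The plan is to make the law of $A$ explicit from the definition of an attempt, and then to bound its tail by dropping the success/failure conditioning at the cost of a single factor.

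\emph{Step 1: the law of $A$.} Condition on $T_0=j$. A failed attempt has length $t$ exactly when the process survives the first $t-1$ steps (probability $s^{t-1}$), a catastrophe strikes at step $t$ (probability $q$), and the base task has not yet completed, i.e.\ $T_0>t-1$. Since the $\pi_j$ sum to one, this gives
\[
  (1-p)\,P(A=t) = q\,s^{t-1}\,P(T_0\ge t),
\]
which is consistent with~\eqref{eq:GA}, because $\sum_{t\ge1} w^t s^{t-1}P(T_0\ge t) = w\sum_{j\ge0}u^jP(T_0>j)$.

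\emph{Step 2: summing the tail.} Since $P(T_0\ge r)$ is nonincreasing in $r$,
\[
  (1-p)\,P(A\ge t) = q\sum_{r\ge t}s^{r-1}P(T_0\ge r) \le P(T_0\ge t)\,q\sum_{r\ge t}s^{r-1} = s^{t-1}P(T_0\ge t).
\]
Dividing by $1-p$ yields
\[
  P(A\ge t)\le s^{t-1}\,\frac{P(T_0\ge t)}{1-p}.
\]

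\emph{Step 3: removing the factor $P(T_0\ge t)/(1-p)$.} This is the main obstacle: we need $P(T_0\ge t)\le 1-p$. For $t=1$ the claimed bound is trivial, since $P(A\ge1)=1=s^0$. For $t\ge2$, note that $p=\sum_j s^j\pi_j$ and split $1-p=\sum_j(1-s^j)\pi_j$ into the contributions from $j<t$ and $j\ge t$. The part $\sum_{j\ge t}(1-s^j)\pi_j$ falls short of $P(T_0\ge t)$ by $\sum_{j\ge t}s^j\pi_j$, while the part $\sum_{j<t}(1-s^j)\pi_j$ is not controlled, so the two-step inequality can fail. For example, if $T_0\equiv n$ is large, then $P(T_0\ge t)=1$ but $1-p=1-s^n<1$.

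I would therefore first check the exact case directly. For $T_0\equiv n$ and $t\le n$,
\[
  P(A\ge t) = \frac{s^{t-1}-s^n}{1-s^n} \le s^{t-1},
\]
and $P(A\ge t)=0$ for $t>n$. So the target bound holds here even though Step 2 loses it.

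To prove it in general, I would avoid bounding term by term and use the exact identity
\[
  (1-p)\,P(A\ge t) = \sum_{j\ge t}\pi_j\,(s^{t-1}-s^j).
\]
The inequality then reduces to
\[
  \sum_{j\ge t}\pi_j\,(s^{t-1}-s^j) \le s^{t-1}\sum_j\pi_j\,(1-s^j),
\]
which is equivalent to
\[
  \sum_{j\ge t}\pi_j\,(s^{t-1}s^j - s^j) \le s^{t-1}\sum_{j<t}\pi_j\,(1-s^j).
\]
The left side is nonpositive because $s^{t-1}\le1$, and the right side is nonnegative, which closes the argument. This reduction is the route I would commit to.
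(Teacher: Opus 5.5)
Your argument is correct, but it reaches the bound by a different route than the paper. The paper stays in the tail-sum form of your Step 2 and only changes the termwise comparison. Writing $r=t+k$ and factoring out $s^{t-1}$, it bounds $P(T_0\ge t+k)\le P(T_0\ge k+1)$ instead of $P(T_0\ge r)\le P(T_0\ge t)$, so that
\[
  (1-p)\,P(A\ge t) = s^{t-1}\sum_{k\ge0}q\,s^{k}P(T_0\ge t+k) \le s^{t-1}\sum_{k\ge0}q\,s^{k}P(T_0\ge k+1) = s^{t-1}(1-p),
\]
where the last sum is exactly the total failure probability. So your diagnosis in Step 3, that termwise bounding fails, applies only to the comparison you chose. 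That comparison discards the decay of the tail and leaves $P(T_0\ge t)$ where $1-p$ is needed; the index-shifted comparison is already strong enough and gives a two-line proof. Your route instead exchanges the order of summation into the $\pi_j$ coordinates and computes the slack exactly:
\[
  s^{t-1}(1-p)-(1-p)\,P(A\ge t) = s^{t-1}\sum_{j<t}\pi_j(1-s^j) + (1-s^{t-1})\sum_{j\ge t}\pi_j s^j \;\ge\; 0 .
\]
This costs a little more algebra but tells you more. It shows the bound is an equality at $t=1$ and strict for every $t\ge2$, and it recovers your $T_0\equiv n$ computation as a special case (slack $s^n(1-s^{t-1})$ for $t\le n$). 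Both arguments are complete.
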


\begin{proof}
A failure at step $j$ requires survival through steps $1, \ldots, j-1$ (probability $s^{j-1}$), catastrophe at step $j$ (probability $q$), and $T_0 \geq j$.  Hence
\[
  P(A \geq t) = \frac{\sum_{j \geq t} q\,s^{j-1} P(T_0 \geq j)}{1-p}.
\]
Factoring $s^{t-1}$ and using $P(T_0 \geq t+k) \leq P(T_0 \geq k+1)$: the numerator is at most $s^{t-1}\sum_{k \geq 0} q\,s^k P(T_0 \geq k+1) = s^{t-1}(1-p)$.
\end{proof}

\begin{lemma}[Uniform moment-ratio control]\label{lem:moment-ratio}
Fix $q \in (0,1)$.  For any base process $T_0$ and any integer $k \geq 1$,
\[
  E[A^k] \leq M_k(q) := \sum_{t \geq 1} k\,t^{k-1}\,s^{t-1} < \infty,
\]
and $E[A] \geq 1$.  Hence all moment ratios $E[A^k]/E[A]^k$ are bounded by constants depending only on~$q$.
\end{lemma}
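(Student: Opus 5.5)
The plan is to obtain both bounds directly from the tail estimate of Lemma~\ref{lem:tail}, through the layer-cake formula for moments of a positive-integer-valued variable. Since $A \geq 1$ almost surely, we get $E[A] \geq 1$ at once. This holds because a failed attempt ends with a catastrophe at some step $j\ge 1$, so its length is at least one.

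For the upper bound, I would write, for integer-valued $A\ge 1$,
\[
  E[A^k] \;=\; \sum_{t\ge 1}\bigl(t^k-(t-1)^k\bigr)\,P(A\ge t).
\]
The mean value theorem gives $t^k-(t-1)^k \le k\,t^{k-1}$ for $t\ge1$. Combining this with $P(A\ge t)\le s^{t-1}$ from Lemma~\ref{lem:tail} yields
\[
  E[A^k]\le \sum_{t\ge1} k\,t^{k-1}s^{t-1} = M_k(q).
\]
This series converges because $s\in(0,1)$: by the ratio test, polynomial growth is beaten by geometric decay. The bound does not depend on $T_0$, since Lemma~\ref{lem:tail} is uniform over all base processes.

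Finally, $E[A]\ge1$ gives $E[A]^k\ge1$, so $E[A^k]/E[A]^k\le M_k(q)$, which is a constant depending only on $q$.

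There is no real obstacle here. The only point to check is that Lemma~\ref{lem:tail} indeed holds for every $t\ge1$, including $t=1$ (where it is trivial), so that the summation identity can be applied termwise without any boundary correction.
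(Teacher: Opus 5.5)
Your proposal is correct and follows essentially the same route as the paper. It combines the tail-sum formula $E[A^k]=\sum_{t\ge1}\bigl(t^k-(t-1)^k\bigr)P(A\ge t)$, the mean-value bound $t^k-(t-1)^k\le k\,t^{k-1}$, and the uniform tail estimate of Lemma~\ref{lem:tail}, and then uses $A\ge1$ to get $E[A]^k\ge1$ and hence $E[A^k]/E[A]^k\le M_k(q)$.
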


\begin{proof}
The tail-sum formula $E[A^k] = \sum_{t \geq 1}[t^k - (t-1)^k]P(A \geq t)$, combined with the mean value theorem bound $t^k - (t-1)^k \leq kt^{k-1}$ and Lemma~\ref{lem:tail}, gives $E[A^k] \leq \sum_{t \geq 1} kt^{k-1}s^{t-1} < \infty$.  The bound $E[A] \geq 1$ is immediate since $A \geq 1$.
\end{proof}

\begin{lemma}[Scaling perturbation]\label{lem:scaling}
For any non-negative random variable $X$ and $r \in (0,1)$,
\[
  \dK(rX, \Exp(1)) \leq \dK(X, \Exp(1)) + (1-r).
\]
\end{lemma}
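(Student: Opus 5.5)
We will prove the bound by the triangle inequality for $\dK$, passing through the intermediate law of $r\xi$ with $\xi\sim\Exp(1)$:
\[
  \dK(rX,\Exp(1)) \le \dK(rX, r\xi) + \dK(r\xi, \xi).
\]
The first step is to observe that the Kolmogorov distance is invariant under a common positive rescaling. Indeed, $P(rX\le x)=P(X\le x/r)$ and $P(r\xi\le x)=P(\xi\le x/r)$, and $x\mapsto x/r$ is a bijection of $\mathbb{R}$. Taking the supremum over $x$ therefore gives $\dK(rX,r\xi)=\dK(X,\Exp(1))$ exactly.

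The second step is to bound $\dK(r\xi,\xi)=\sup_{x\ge 0}\bigl|e^{-x}-e^{-x/r}\bigr|$. Since $r<1$, we have $e^{-x/r}\le e^{-x}$, so the difference equals $e^{-x}\bigl(1-e^{-x(1/r-1)}\bigr)$. Using $1-e^{-y}\le y$, this is at most $x e^{-x}(1-r)/r$. Maximizing $xe^{-x}$ gives the value $1/e$, so
\[
  \dK(r\xi,\xi)\le \frac{1-r}{e\,r}.
\]
This suffices whenever $r\ge 1/e$.

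The main (minor) obstacle is small $r$, where $(1-r)/(er)$ can exceed $1-r$. For that case we use an exact computation instead of the linear bound. Setting the derivative of $e^{-x}-e^{-x/r}$ to zero gives the maximizer $x^*=\frac{r\ln(1/r)}{1-r}$, and substituting yields the maximum value
\[
  r^{\,r/(1-r)}-r^{\,1/(1-r)} = r^{\,r/(1-r)}(1-r)\le 1-r.
\]
This bound holds uniformly for all $r\in(0,1)$, so it alone closes the argument and the case split is unnecessary. Combining the two steps gives $\dK(rX,\Exp(1))\le \dK(X,\Exp(1))+(1-r)$.
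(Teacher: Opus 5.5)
Your proof is correct and is essentially the paper's argument. The paper applies the same triangle inequality pointwise in $x$: it compares $P(X\le x/r)$ with $1-e^{-x/r}$, and then $1-e^{-x/r}$ with $1-e^{-x}$. It computes the same maximizer $x^*=r\ln(1/r)/(1-r)$ with value $(1-r)\,r^{r/(1-r)}\le 1-r$. As you note yourself, the preliminary $(1-r)/(er)$ bound and the case split are unnecessary and can be dropped.
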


\begin{proof}
Set $\varepsilon = \dK(X, \Exp(1))$ and $F_E(x) = 1-e^{-x}$.  For all $x \geq 0$:
\[
  |P(rX \leq x) - F_E(x)| \leq \underbrace{|P(X \leq x/r) - F_E(x/r)|}_{\leq\, \varepsilon} + \underbrace{|F_E(x/r) - F_E(x)|}_{=\, e^{-x} - e^{-x/r}}.
\]
The second term is non-negative (since $r < 1$) and maximized at $x^* = r\ln(1/r)/(1-r)$, where it equals $(1-r)r^{r/(1-r)} \leq 1-r$.
\end{proof}

\begin{lemma}[Additive perturbation]\label{lem:additive}
Let $X \geq 0$ and $Y \geq 0$ be independent. Then
\[
  \dK(X+Y, \Exp(1)) \leq \dK(X, \Exp(1)) + E[Y].
\]
\end{lemma}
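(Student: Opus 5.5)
Condition on $Y$ and exploit the Lipschitz property of the exponential CDF $F_E(x)=1-e^{-x}$ on $[0,\infty)$, whose Lipschitz constant is $1$. Write $\varepsilon=\dK(X,\Exp(1))$ and fix $x\ge 0$. By independence,
\[
  P(X+Y\le x)=E\bigl[P(X\le x-Y\mid Y)\bigr]=E\bigl[F_X(x-Y)\bigr],
\]
where $F_X(t)=0$ for $t<0$. The plan is to compare $F_X(x-Y)$ with $F_E(x-Y)$, and then $F_E(x-Y)$ with $F_E(x)$. Here $F_E$ is extended by $0$ on $(-\infty,0)$.

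\emph{First comparison.} For each realization $y$ of $Y$, we have $|F_X(x-y)-F_E(x-y)|\le\varepsilon$. When $x-y\ge 0$ this is the definition of $\varepsilon$. When $x-y<0$ both quantities vanish, since $X\ge 0$. Taking expectations gives $|P(X+Y\le x)-E[F_E(x-Y)]|\le\varepsilon$.

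\emph{Second comparison.} The extended $F_E$ is nondecreasing and $1$-Lipschitz on all of $\mathbb{R}$. It is continuous at $0$ with value $0$, and its slope is at most $1$ on each piece. Hence
\[
  0\le F_E(x)-F_E(x-y)\le \min(y,x)\le y \qquad \text{for } y\ge 0.
\]
Taking expectations gives $0\le F_E(x)-E[F_E(x-Y)]\le E[Y]$. Combining the two comparisons by the triangle inequality yields $|P(X+Y\le x)-F_E(x)|\le\varepsilon+E[Y]$. Taking the supremum over $x\ge 0$ gives the claim; for $x<0$ both CDFs are zero.

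The only point requiring care, and the nearest thing to an obstacle, is the region $x-Y<0$. There one must check that extending $F_E$ by zero keeps the Lipschitz bound valid and that the first comparison holds trivially. If $E[Y]=\infty$, the statement is vacuous.
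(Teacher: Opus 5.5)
Your proof is correct and follows essentially the same route as the paper's: condition on $Y$, compare $F_X(x-Y)$ with $F_E(x-Y)$ at cost $\varepsilon$, and then use that $F_E$ is $1$-Lipschitz. The paper proves the upper bound separately via $P(X+Y\le x)\le P(X\le x)$ and handles the event $\{Y>x\}$ through $F_E(x)P(Y>x)\le E[Y\mathbf 1_{\{Y>x\}}]$; your extension of $F_E$ by zero to $(-\infty,0)$ absorbs that case into the Lipschitz step and gives both directions at once.
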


\begin{proof}
Set $\varepsilon = \dK(X, \Exp(1))$ and $F_E(x) = 1 - e^{-x}$.

\emph{Upper bound.}\; Since $Y \geq 0$, $P(X+Y \leq x) \leq P(X \leq x) \leq F_E(x) + \varepsilon$.

\emph{Lower bound.}\; We must show $F_E(x) - P(X+Y \leq x) \leq E[Y] + \varepsilon$.  Conditioning on~$Y$:
\[
  F_E(x) - P(X+Y \leq x)
  = E\bigl[\bigl(F_E(x)-F_X(x-Y)\bigr)\mathbf 1_{\{Y\le x\}}\bigr]
  + F_E(x)\,P(Y > x).
\]
For the first term, when $Y \leq x$, $F_E(x) - F_X(x-Y)
  \leq \bigl[F_E(x) - F_E(x-Y)\bigr] + \varepsilon
  \leq Y + \varepsilon$, using $|F_E'| \leq 1$.  For the second term, $F_E(x)\,P(Y > x) \leq x P(Y> x) \leq E[Y \cdot \mathbf 1_{\{Y> x\}}]$. Combining them gives
\[
  F_E(x) - P(X+Y \leq x) \leq E[Y] + \varepsilon.  \qedhere
\]
\end{proof}

R\'enyi's classical theorem (\cite{Renyi56}) asserts that normalized geometric sums converge in distribution to $\Exp(1)$.~\cite{Brown90} established the first explicit Kolmogorov-distance bound, showing that the rate is linear in the geometric parameter $p$.

\begin{theorem}[Theorem 2.1 (ii),~\cite{Brown90}]\label{thm:Brown}
Let $U=\sum_{i=1}^{N} X_i$, $N\sim \Geom_0(p)$, where \(0<p<1\), the \(X_i\) are i.i.d.\ nonnegative random variables, \(P(X_1=0)<1\), and \(E[X_1^2]<\infty\). Set $\bar p:=1-p$,
$\rho_2:=\frac{E[X_1^2]}{E[X_1]^2}$.
Then
\begin{equation}\label{eq:Brown-geom-our}
d_K\!\left(\frac{U}{E[U]},\,\Exp(1)\right)
\le
p\max\!\left(\rho_2,\frac{\rho_2}{2\bar p}\right).
\end{equation}
\end{theorem}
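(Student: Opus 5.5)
The plan is Brown's equilibrium-distribution argument, which runs entirely through the identity characterizing $\Exp(1)$ as the fixed point of the stationary-excess (equilibrium) transform. Write $W = U/E[U]$, where $E[U] = \bar p\,E[X_1]/p$. For a nonnegative variable $Z$ with mean $1$, its equilibrium transform $Z^e$ has survival function $P(Z^e>x)=\int_x^\infty P(Z>t)\,dt$. The exponential law is the unique fixed point of this transform. So the idea is to measure how far $W$ is from being a fixed point, and then convert that defect into a Kolmogorov bound.

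\emph{Step 1 (structural identity).} I will show that $W^e \stackrel{d}{=} W + Y$, where $Y = X_1^e/E[U]$ is independent of $W$ and $X_1^e$ is the equilibrium transform of $X_1$. The cleanest route is Laplace transforms. The geometric-sum transform is $\phi_U = p/(1-\bar p\,\phi_X)$, and the equilibrium transform is $\phi_{Z^e}(\theta) = (1-\phi_Z(\theta))/(\theta E Z)$. A direct computation then gives $\phi_{U^e} = \phi_U\,\phi_{X^e}$, which is the renewal-theoretic ``memorylessness'' of geometric compounding. A consequence is
\[
E[Y] = \frac{E[X_1^2]}{2E[X_1]\,E[U]} = \frac{p\,\rho_2}{2\bar p}.
\]

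\emph{Step 2 (differential inequality).} Set $\bar F(x)=P(W>x)$ and $H(x)=P(W^e>x)=\int_x^\infty \bar F$. Then $H'=-\bar F$. By Step 1, $H(x)=P(W+Y>x)$, so
\[
0\le r(x):=H(x)-\bar F(x)\cdot{}\!\!\!\quad\text{wait}\quad r(x):=\bar F(x) - H(x)\cdot 0 .
\]
More carefully: since $Y\ge 0$, we have $H(x)\ge \bar F(x)$, and
\[
r(x):=H(x)-\bar F(x)=P(x-Y< W\le x).
\]
The relation $H'=-\bar F$ becomes the linear ODE $H' + H = r$ with $H(0)=1$. Solving it gives
\[
H(x)=e^{-x}+\int_0^x e^{-(x-t)}r(t)\,dt ,
\]
so $0\le H(x)-e^{-x}\le \sup r$. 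Then $\bar F = H - r$ yields $|\bar F(x)-e^{-x}|\le \sup_t r(t)$ pointwise, which is the Kolmogorov distance up to the boundary treatment at $x=0$.

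\emph{Step 3 (bounding $r$).} This is the step I expect to be the main obstacle. I need a concentration bound $\sup_x P(x-Y<W\le x)\lesssim E[Y]$ that has the right constant and handles the atom $P(W=0)=p$. My first attempt will bootstrap with the exponential itself:
\[
P(x-Y<W\le x)\le P(x-Y<E\le x)+2\,d_K(W,\Exp(1))\le E[Y]+2d_K .
\]
This is circular, because absorbing $d_K$ requires its coefficient to be less than $1$. So instead I will integrate the ODE more carefully. The integral representation already averages $r$ against $e^{-(x-t)}$, and
\[
\int_0^\infty r(t)\,dt = E[(W+Y)-W]=E[Y].
\]
Combining this with the monotonicity of $H$ and $\bar F$ should give the one-sided bounds $\bar F(x)-e^{-x}\le E[Y]\cdot(\text{const})$ and $e^{-x}-\bar F(x)\le \rho_2 p$. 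Here the atom at $0$ (mass $p$) and the factor $\rho_2$ enter through $P(W\le \cdot)$ near zero. Balancing the two one-sided estimates should produce the stated $p\max(\rho_2,\rho_2/(2\bar p))$.
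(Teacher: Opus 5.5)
The paper gives no proof of this statement. It is quoted from Brown (1990) and used only as a black box in Step~1 of the Bridge Theorem, so your argument has to stand on its own.

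\emph{What is correct.} Steps~1--2 are correct (delete the scratch text in the first display of Step~2). You have $W^e\stackrel{d}{=}W+Y$ with $E[Y]=p\rho_2/(2\bar p)$, $r(x)=H(x)-\bar F(x)=P(x-Y<W\le x)\ge 0$, and $H(x)=e^{-x}+\int_0^x e^{-(x-t)}r(t)\,dt$. These already give one side with the right constant:
\[
  \bar F(x)-e^{-x}=\int_0^x e^{-(x-t)}r(t)\,dt-r(x)\le\int_0^\infty r(t)\,dt=E[Y]=\frac{p\rho_2}{2\bar p}.
\]
This is exactly the second entry of the max.

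\emph{The gap.} The other side is $e^{-x}-\bar F(x)\le r(x)$, so you need $\sup_x P(x-Y<W\le x)=O(p\rho_2)$. Your Step~3 never proves this; it ends at ``should give''.

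The route you sketch cannot supply it. The facts you propose to combine ($H'=-\bar F$, $H(0)=1$, monotonicity, $r\ge0$, $\int_0^\infty r=E[Y]$) only yield $r(x)\le\sqrt{2E[Y]}$, since $r(t)\ge r(x)-(t-x)$ for $t\ge x$. This is sharp for those facts. For small $\eta>0$, take the law with:
\begin{itemize}
\item $P(W>t)=e^{-t}$ on $[0,x_0)$;
\item an atom of mass $\eta$ at $x_0$;
\item no mass on $(x_0,x_1)$, where $x_1-x_0=\eta/(e^{-x_0}-\eta)$;
\item $P(W>t)=(e^{-x_0}-\eta)e^{-(t-x_1)}$ for $t\ge x_1$.
\end{itemize}
It has mean $1$, and $r$ is a triangle of height $\eta$ on $[x_0,x_1]$ (zero elsewhere), so $\int r=O(\eta^2)$. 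Yet $e^{-x_0}-\bar F(x_0)=\eta$.

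\emph{What is missing.} You need a small-ball estimate for $W$ at the scale of $Y$, uniform in $x$. It must come from the geometric-sum structure of $W$ itself, not merely from $W^e=W+Y$. One way: set $Z_i=X_i/E[U]$, $S_k=Z_1+\dots+Z_k$, and let $R=\sum_{k\ge0}\bar p^{\,k}P(S_k\in\cdot)$ be the defective renewal measure. Then $P(W\in\cdot)=p\,R$ and $P(Y>y)=\frac{\bar p}{p}\int_y^\infty P(Z_1>t)\,dt$, hence
\[
  r(x)=\bar p\int_0^\infty P(Z_1>t)\,R\bigl((x-t,x]\bigr)\,dt .
\]
A strong-Markov argument at the first index with $S_k>x-t$ gives $R((x-t,x])\le R([0,t])$. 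This transfer is what makes your intuition about $P(W\le\cdot)$ near zero correct. Lorden's inequality then gives $R([0,t])\le t/E[Z_1]+\rho_2$, so $\sup_x r(x)\le\frac32\,p\rho_2$.

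That closes a proof of $\dK(W,\Exp(1))\le p\max\bigl(\frac32\rho_2,\rho_2/(2\bar p)\bigr)$. This is enough for every use of the theorem in the paper, where only the order $O(p\rho_2)$ matters. It is not the statement as written: getting the constant $\rho_2$ in the first entry needs a finer estimate of $r$ than this.
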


We can now state and prove the Bridge Theorem.  The name reflects its role: it bridges the $O(p)$ exponential approximation of the pure geometric sum $U/\mu_U$ to the full completion time $W = T/\mu$.

\begin{theorem}[Bridge Theorem]\label{thm:bridge}
For $0 < p \leq 1/2$,
\begin{equation}\label{eq:bridge}
  \dK(W, \Exp(1)) \leq C_1\,p + 2\,\frac{b}{\mu},
\end{equation}
where $C_1 < \infty$ depends only on the distribution of $A$ (and is uniformly bounded over all base processes $T_0$ for fixed $q$, by Lemma~\ref{lem:moment-ratio}).
\end{theorem}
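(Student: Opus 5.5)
The plan is to split $W = U/\mu + B/\mu$ with $U=\sum_{i=1}^N A_i$, and to chain three perturbation steps, one for each lemma above. First, Brown's Theorem~\ref{thm:Brown} applies to $U$: the $A_i$ are i.i.d., nonnegative and $A\ge1$, so $P(A=0)=0<1$, and $E[A^2]<\infty$ by Lemma~\ref{lem:moment-ratio}. With $\rho_2=E[A^2]/E[A]^2\le M_2(q)$ and $\bar p\ge 1/2$ (because $p\le1/2$), it gives
\[
\dK\!\left(\frac{U}{\mu_U},\Exp(1)\right)\le p\max\!\left(\rho_2,\frac{\rho_2}{2\bar p}\right)\le p\,\rho_2\le M_2(q)\,p ,
\]
where $\mu_U=E[U]=\frac{1-p}{p}E[A]$. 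This produces the $C_1p$ term with $C_1=\rho_2$, and $C_1$ is uniform over $T_0$ for fixed $q$.

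Second, rescale from $\mu_U$ to $\mu$. Since $\mu=E[T]=\mu_U+b$, we have $U/\mu=r\cdot U/\mu_U$ with $r=\mu_U/\mu=1-b/\mu\in(0,1)$; here $b>0$ because $B\ge1$. Lemma~\ref{lem:scaling} then gives $\dK(U/\mu,\Exp(1))\le M_2(q)\,p+b/\mu$.

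Third, add the independent nonnegative term $Y=B/\mu$. Its mean is $E[Y]=b/\mu$, so Lemma~\ref{lem:additive} gives $\dK(W,\Exp(1))\le M_2(q)\,p+2b/\mu$, which is \eqref{eq:bridge}. The mutual independence of $N$, $(A_i)$ and $B$ comes from the decomposition~\eqref{eq:decomp}.

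The only delicate points are the hypotheses. In the first step, we need $p\le1/2$ to replace $\max(\rho_2,\rho_2/(2\bar p))$ by $\rho_2$. In the second, we need $r<1$ strictly, which holds since $b\ge1>0$. Beyond checking these, I expect no real obstacle. The nontrivial content is that $C_1$ is uniform in $T_0$, and this is exactly Lemma~\ref{lem:moment-ratio}, whose tail bound in Lemma~\ref{lem:tail} does not depend on $T_0$. If a sharper constant were wanted, I would keep $C_1=\rho_2$ explicitly rather than bounding it by $M_2(q)$.
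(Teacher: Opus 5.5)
Your proposal is correct and follows the paper's proof step for step. It uses Brown's bound for $U/\mu_U$ with $\rho_2\le M_2(q)$ (your explicit use of $p\le 1/2$ to collapse the maximum is a check the paper leaves implicit), then Lemma~\ref{lem:scaling} with $r=1-b/\mu$, then Lemma~\ref{lem:additive} with $Y=B/\mu$; the only cosmetic difference is that you keep $C_1=\rho_2$ where the paper takes $C_1=M_2(q)$.
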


\begin{proof}
Set $\mu_U = E[U] = (1-p)E[A]/p$ and $r = \mu_U/\mu = 1 - b/\mu \in (0,1)$.

\emph{Step 1: Geometric-convolution bound for $V = U/\mu_U$.}\;
By Lemma~\ref{lem:moment-ratio}, $E[A^k] \leq M_k(q) < \infty$ for all $k \geq 1$, so Brown's moment condition $E[A^2] < \infty$ is satisfied and the moment ratios $\rho_j = E[A^j]/E[A]^j \leq M_j(q)/1 = M_j(q)$ are uniformly bounded.  Theorem~\ref{thm:Brown} applied with \(X_i=A_i\) gives $\dK(V,\Exp(1))\le \rho_2(A)\,p$. Since $\rho_2(A)=E[A^2]/E[A]^2\le M_2(q)$ by Lemma~\ref{lem:moment-ratio},
we may take $C_1=M_2(q)$, and hence 
$\dK(V,\Exp(1))\le C_1 p$.

\emph{Step 2: Scaling perturbation.}\;
Since $U/\mu = rV$ with $r = 1 - b/\mu$, Lemma~\ref{lem:scaling} gives $\dK(U/\mu, \Exp(1)) \leq C_1 p + b/\mu$.

\emph{Step 3: Additive perturbation.}\;
Since $W = U/\mu + B/\mu$ and $U, B$ are independent (by mutual independence of $N$, $(A_i)$, $B$ in~\eqref{eq:decomp}), Lemma~\ref{lem:additive} gives $\dK(W, \Exp(1)) \leq C_1 p + b/\mu + E[B/\mu] = C_1 p + 2b/\mu$. 
\end{proof}

\begin{remark}[Connection to the staircase]\label{rem:bridge-staircase}
The explicit correction terms in the Bridge Theorem---$p = g(s)$ and $b/\mu = qsD_1/(1-p)$---involve only the $1$-jet $\{g(s), g'(s)\}$, matching the staircase level of $\Var(T)$~\eqref{eq:var}. The Brown constant $C_1$ depends on the full distribution of $A$ (via moment ratios), so this staircase connection is more heuristic than exact.
\end{remark}


\subsection{The Sharp Two-Sided Bound}\label{sec:two-sided}

We now combine the Laplace expansion (Theorem~\ref{thm:laplace}) and the Bridge Theorem (Theorem~\ref{thm:bridge}) to obtain a sharp two-sided bound showing that $p + |\alpha|$ is the correct rate.

\begin{theorem}[Two-sided bound]\label{thm:two-sided}
There exist constants $c_A, C_A, \delta_A > 0$ depending only on $q$ such that, for $p + |\alpha| \leq \delta_A$,
\begin{equation}\label{eq:two-sided}
  c_A\,(p + |\alpha|) \leq \dK\!\left(\frac{T}{E[T]},\, \Exp(1)\right) \leq C_A\,(p + |\alpha|).
\end{equation}
\end{theorem}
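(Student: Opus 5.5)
The plan is to prove the two inequalities separately. The upper bound will come from the Bridge Theorem (Theorem~\ref{thm:bridge}); the lower bound will combine a lattice argument (giving $p$) with an evaluation of the Laplace expansion (Theorem~\ref{thm:laplace}) at a fixed $\theta$ (giving $|\alpha|$).

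\emph{Upper bound.} By Proposition~\ref{prop:alpha-properties}(b), $b/\mu = sp/(1-p) - \alpha \le 2sp + |\alpha|$ for $p\le 1/2$. Substituting this into \eqref{eq:bridge} gives
\[
\dK(W,\Exp(1)) \le C_1 p + 4sp + 2|\alpha| \le C_A(p+|\alpha|),
\]
with $C_A=\max(C_1+4,2)$ and $C_1=M_2(q)$ from Lemma~\ref{lem:moment-ratio}. This depends only on $q$, so the upper bound needs nothing further.

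\emph{Lower bound, $|\alpha|$ part.} Fix $\theta=1$ and choose $\delta_A$ small enough that $\epsilon=pq/(1-p)\le\epsilon_0$ and $p\le p_0$. Theorem~\ref{thm:laplace} then gives
\[
\bigl|E e^{-W}-\tfrac12\bigr| \ge \tfrac{|\alpha|}{4} - C|\alpha|^2 - C'p .
\]
On the other hand, for any nonnegative $X$, integration by parts gives $|E e^{-X}-E e^{-E}| = \bigl|\int_0^\infty e^{-x}(F_X-F_E)(x)\,dx\bigr| \le \dK(X,E)$, where $E\sim\Exp(1)$. Hence $\dK \ge |\alpha|/8 - C'p$ once $|\alpha|\le 1/(8C)$.

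\emph{Lower bound, $p$ part.} $T$ is integer valued, so $W$ is supported on the lattice $\mu^{-1}\mathbb{Z}$. I will take the atom at the smallest point, $P(W=1/\mu)=P(T=1)=G_T'(0)$. Since $G_T(w)=N/D$ with $D(0)=1$, this equals $N'(0)=s\,g'(0)=s\pi_1$. That quantity is not directly comparable to $p$ in general, so instead I plan to use a generic jump argument. Any distribution $F$ that is constant on $[k/\mu,(k+1)/\mu)$ satisfies
\[
\dK(F,F_E) \ge \tfrac12\bigl(F_E((k+1)/\mu)-F_E(k/\mu)\bigr) \approx \tfrac{1}{2\mu}e^{-(k+1)/\mu},
\]
because $F_E$ increases by that amount across an interval where $F$ is flat. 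Taking $k$ of order $\mu$ yields $\dK\ge c/\mu = c\,qp/(1-p)\ge c_q p$.

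\emph{Combining.} We have $\dK\ge c_q p$ and $\dK\ge |\alpha|/8 - C'p$. Taking the convex combination $\lambda(\cdot)+(1-\lambda)(\cdot)$ of these two bounds with $\lambda$ close to $1$ (so that $\lambda c_q > (1-\lambda)C'$) gives $\dK\ge c_A(p+|\alpha|)$.

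The main obstacle is the lattice step. I have to check that $P(W\in[k/\mu,(k+1)/\mu))$ really concentrates on the single atom $k/\mu$, that is, that $F_W$ has no increase strictly inside the interval. This holds since the support of $W$ is exactly $\mu^{-1}\mathbb{N}$. I also need the $1/2$ constant to be correct: flatness of $F$ against an increase $\Delta$ of $F_E$ forces $|F-F_E|\ge\Delta/2$ at one of the two endpoints (the right endpoint taken as a left limit), and this is fine for $\dK$ defined as a supremum. A secondary check is that the $O(p)$ error in the Laplace expansion, evaluated at $\theta=1$, has a constant depending only on $q$, which Theorem~\ref{thm:laplace} provides.
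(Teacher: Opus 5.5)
Your proposal is correct and follows the paper's proof: the upper bound comes from the Bridge Theorem combined with $b/\mu = sp/(1-p)-\alpha$, and the lower bound pairs a lattice argument (giving $\Omega(p)$) with the Laplace expansion at $\theta=1$ (giving $|\alpha|/4 - O(|\alpha|^2) - O(p)$). The differences are cosmetic. Your flat-interval bound on $[k/\mu,(k+1)/\mu)$ already works with $k=0$, which is exactly the paper's observation that $W\ge 1/\mu$ forces $F_W=0$ on $[0,1/\mu)$, and your convex combination of the two lower bounds replaces the paper's two-case split.
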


\begin{proof}[Proof of the upper bound]
From Theorem~\ref{thm:bridge} and Proposition~\ref{prop:alpha-properties}~\eqref{lem:b-mu}: when $\alpha \leq 0$, $b/\mu = |\alpha| + sp/(1-p)$; when $\alpha > 0$, $b/\mu = sp/(1-p) - \alpha \leq sp/(1-p)$.  In both cases, $2b/\mu \leq 2|\alpha| + 2sp/(1-p)$.  Hence
\[
  \dK \leq C_1 p + 2|\alpha| + \frac{2sp}{1-p} \leq C_A(p + |\alpha|). \qedhere
\]
\end{proof}

\begin{proof}[Proof of the lower bound]
We exhibit two independent error sources, each yielding a lower bound.

\emph{Lower bound from discreteness.}\;
Since $T \geq 1$, $W = T/\mu \geq 1/\mu = pq/(1-p)$.  For $x = pq/(2(1-p)) < 1/\mu$: $P(W \leq x) = 0$, while $F_E(x) = 1 - e^{-x} \geq x/2 = pq/(4(1-p))$ (using $1 - e^{-t} \geq t/2$ for $t \in [0,1]$).  Hence
\begin{equation}\label{eq:lower-p}
  \dK \geq \frac{pq}{4(1-p)} = \Omega(p).
\end{equation}

\emph{Lower bound from the Laplace transform.}\;
For any non-negative random variables $W, Z$ and any $\theta > 0$, the integral representation $E[e^{-\theta X}] = \int_0^\infty \theta e^{-\theta x}(1-F_X(x))\,dx$ gives
\begin{equation}\label{eq:laplace-lower}
  \dK(W, Z) \geq |E[e^{-\theta W}] - E[e^{-\theta Z}]|.
\end{equation}
Applying~\eqref{eq:laplace-lower} with $Z \sim \Exp(1)$ and $\theta = 1$: for $p$ sufficiently small, Theorem~\ref{thm:laplace} gives $|E[e^{-W}] - 1/2| = |\alpha|/4 + O(|\alpha|^2) + O(p)$, so
\begin{equation}\label{eq:lower-alpha}
  \dK \geq \frac{|\alpha|}{4} - O(|\alpha|^2) - O(p) \geq c'|\alpha| - c''p
\end{equation}
for suitable $c', c'' > 0$.

\emph{Combining.}\;
Set $K = 2c''/c'$.  Choose $\delta_A$ small enough that both~\eqref{eq:lower-p} and~\eqref{eq:lower-alpha} hold.

\emph{Case 1}: $|\alpha| \leq Kp$.  Then $p + |\alpha| \leq (1+K)p$.  From~\eqref{eq:lower-p}, $\dK \geq qp/4 \geq q(p+|\alpha|)/(4(1+K))$.

\emph{Case 2}: $|\alpha| > Kp$.  Then $p + |\alpha| \leq (1+1/K)|\alpha|$.  From~\eqref{eq:lower-alpha}, $\dK \geq c'|\alpha|/2 \geq c'K(p+|\alpha|)/(2(K+1))$.

In both cases, $\dK \geq c_A(p + |\alpha|)$ with $c_A = \min\{q/(4(1+K)),\; c'K/(2(K+1))\} > 0$.
\end{proof}

The following counterexample shows that the \(p\)-term in the template \(p+|\alpha|\) cannot be dropped.

\begin{proposition}[Counterexample: $p$ cannot be dropped]\label{prop:counterex}
There exists a family of base processes $(T_0^{(\varepsilon)})_{\varepsilon \to 0}$ with $q = 1/2$ such that $|\alpha_\varepsilon| = o(p_\varepsilon)$ but $\dK(W_\varepsilon, \Exp(1)) = \Theta(p_\varepsilon)$.
\end{proposition}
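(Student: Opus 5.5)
The plan is to build an explicit two-point family of base processes and then read off both bounds from results already proved. Theorem~\ref{thm:two-sided} gives $\dK(W,\Exp(1)) \asymp p+|\alpha|$ for every base process once $p+|\alpha|\le\delta_A$, with constants depending only on $q$. So if $|\alpha_\varepsilon|=o(p_\varepsilon)$ and $p_\varepsilon\to0$, then $\dK=\Theta(p_\varepsilon)$ follows at once. Concretely, the upper bound gives $\dK\le C_A(p_\varepsilon+|\alpha_\varepsilon|)=O(p_\varepsilon)$. The lower bound needs only the discreteness estimate~\eqref{eq:lower-p}, $\dK\ge pq/(4(1-p))$, which holds for every $T_0$. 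The whole task is therefore to exhibit $T_0^{(\varepsilon)}$ with $q=1/2$, $p_\varepsilon\to 0$ and $\alpha_\varepsilon=o(p_\varepsilon)$.

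First I will compute what $\alpha$ looks like at $q=s=1/2$. Writing $\pi_j=P(T_0=j)$, we have
\[
p-qD_1=\sum_j \pi_j s^j\Bigl(1-\tfrac{q}{s}\,j\Bigr)=-\sum_j \pi_j s^j (j-1).
\]
Hence $\alpha\le 0$, and the point $j=1$ contributes nothing to $\alpha$ but contributes $\pi_1/2$ to $p$. This suggests the family
\[
T_0^{(\varepsilon)} = \begin{cases}1 & \text{w.p. } \varepsilon,\\ n & \text{w.p. } 1-\varepsilon,\end{cases}\qquad n=n(\varepsilon)\to\infty,
\]
with $n$ chosen so that $n2^{-n}=o(\varepsilon)$, for example $n=\lceil 2\log_2(1/\varepsilon)\rceil$.

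Next I will evaluate the parameters for this family:
\[
p_\varepsilon=\tfrac{\varepsilon}{2}+(1-\varepsilon)2^{-n}\sim \tfrac{\varepsilon}{2},\qquad
|\alpha_\varepsilon|=\frac{(1-\varepsilon)(n-1)2^{-n-1}}{1-p_\varepsilon}=O(n2^{-n})=o(\varepsilon)=o(p_\varepsilon).
\]
In particular $p_\varepsilon+|\alpha_\varepsilon|\to0$, so for small $\varepsilon$ we are inside the regime $p+|\alpha|\le\delta_A$ of Theorem~\ref{thm:two-sided}. That theorem is uniform over base processes for fixed $q=1/2$, so the two-sided bound applies to every member of the family, and the argument of the first paragraph concludes.

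The only delicate point is the observation that $\alpha$ has a fixed sign at $q=1/2$. Because every atom with $j\ge2$ pushes $\alpha$ strictly negative, making $|\alpha|\ll p$ forces the small-$p$ mass to come from the atom at $j=1$ (which is $\alpha$-neutral), while the bulk of the mass sits at a large $n$ whose $\alpha$-contribution $(n-1)2^{-n}$ is exponentially small. Once this is seen, the verification is routine. One should also check that the implicit constants in Theorem~\ref{thm:two-sided} really depend only on $q$; this holds because the Bridge Theorem constant is controlled by Lemma~\ref{lem:moment-ratio} and the Laplace expansion constants by the bound $s^kD_k\le (k/(e|\ln s|))^k$.
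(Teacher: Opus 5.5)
Your proposal is correct and is essentially the paper's proof. You use the same two-point family $\varepsilon\,\delta_1+(1-\varepsilon)\,\delta_{M}$ at $q=1/2$ (the paper takes $M_\varepsilon=\lceil 3\log_2(1/\varepsilon)\rceil$) with the upper bound coming from the Bridge Theorem. The only cosmetic difference is the lower bound: the paper evaluates the CDF at the atom $x_0=1/E[T]$, where $P(T\le 1)=\varepsilon/2\approx p_\varepsilon$ gives $\dK\ge p_\varepsilon/2+o(p_\varepsilon)$, whereas you invoke the generic discreteness bound \eqref{eq:lower-p}, which has a smaller constant but the same order.
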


\begin{proof}
Set $q = s = 1/2$ and $P(T_0^{(\varepsilon)} = t) = \varepsilon\,\mathbf{1}_{t=1} + (1-\varepsilon)\,\mathbf{1}_{t = M_\varepsilon}$ where $M_\varepsilon = \lceil 3\log_2(1/\varepsilon)\rceil$.

\emph{Claim 1}: $|\alpha_\varepsilon| = o(p_\varepsilon)$.  Since $2^{-M_\varepsilon} \leq \varepsilon^3$: $p_\varepsilon = \varepsilon/2 + O(\varepsilon^3)$ and $qD_{1,\varepsilon} = \varepsilon/2 + o(\varepsilon) = p_\varepsilon + o(p_\varepsilon)$, so $\alpha_\varepsilon = o(p_\varepsilon)$.

\emph{Claim 2}: $\dK = \Theta(p_\varepsilon)$.  The event $\{T = 1\}$ requires $T_0 = 1$ and no catastrophe: $P(T = 1) = \varepsilon/2$.  At $x_0 = 1/E[T]$: $P(W \leq x_0) = P(T \leq 1) = p_\varepsilon + o(p_\varepsilon)$ while $1-e^{-x_0} = p_\varepsilon/2 + O(p_\varepsilon^2)$.  Hence $\dK \geq p_\varepsilon/2 + o(p_\varepsilon)$.  The upper bound $\dK = O(p_\varepsilon)$ follows from Theorem~\ref{thm:bridge} (since $|\alpha_\varepsilon| = o(p_\varepsilon)$).
\end{proof}

\begin{remark}[Why $p + |\alpha|$ is the correct template]\label{rem:template}
The two-sided bound decomposes the approximation error into two sources with distinct physical origins:
\begin{itemize}
  \item The \emph{$p$-term} reflects geometric discreteness: the attempt count is integer-valued, so $W$ takes values in $(1/\mu)\mathbb{Z}_{>0}$, creating CDF jumps that no continuous distribution can match.
  \item The \emph{$|\alpha|$-term} reflects the successful-attempt perturbation: $T$ is not a pure geometric sum but includes the final attempt $B$, whose expected length may deviate from the rescaling target.
\end{itemize}
The counterexample engineers near-cancellation of the perturbation ($\alpha \approx 0$) while preserving the irreducible lattice error, exposing the $\Theta(p)$ floor.  In the CCP application (\S\ref{sec:ccp-app}), the opposite regime prevails: $p \ll |\alpha|$, so the perturbation term dominates.
\end{remark}

\section{Continuous-Time Theory}\label{sec:continuous}

The discrete theory of \S\S\ref{sec:completion}--\ref{sec:exp-approx} rests on the affine dependence of $G_T$ on the base PGF $g$ at a linearly shifted argument.  In continuous time, the same mechanism operates with PGFs replaced by Laplace transforms and the multiplicative shift $u = ws$ replaced by an additive shift $\sigma = \lambda + r$.  The entire theory carries over, with one notable sharpening: the geometric-tail gap of Theorem~\ref{thm:char} closes, and affine Laplace structure characterizes \emph{full} Poisson resetting with no residual degree of freedom (Theorem~\ref{thm:ct-char}).

\subsection{Setup}\label{sec:ct-setup}

Let $T_0$ be a positive, almost surely finite, continuous random variable with Laplace transform $\hat{f}_0(\lambda) = E[e^{-\lambda T_0}]$.  Catastrophe occurs via a Poisson process of rate $r > 0$, independent of $T_0$.  The attempt decomposition~\eqref{eq:decomp} carries over with $N \sim \Geom_0(p)$ and
\begin{equation}\label{eq:p-ct}
  p = P(T_0 < \tau) = E[e^{-rT_0}] = \hat{f}_0(r),
\end{equation}
where $\tau \sim \Exp(r)$ is the first catastrophe time. Under any age-based mechanism, catastrophe and $T_0$ are independent, so $p = P(T_0 < \tau) = E[S(T_0)]$, the continuous-time analog of~\eqref{eq:p-general}. More generally, for any continuous-time catastrophe mechanism with hazard rate $h: (0,\infty) \to (0,\infty)$, the survival function $S(x) = \exp\bigl(-\int_0^x h(t)\,dt\bigr)$ gives the probability of no catastrophe before time $x$; Poisson resetting is the special case $h \equiv r$, i.e., $S(x) = e^{-rx}$.  We write $\hat{f}_T(\lambda) = E[e^{-\lambda T}]$ for the Laplace transform of the completion time. 

\subsection{The Laplace Transform Formula and Affine Structure}\label{sec:ct-formula}

The following is a standard consequence of the renewal structure under Poisson resetting (see, e.g.,~\cite{EMS20, CS18}).  We state it to exhibit its affine structure, which has not been identified previously.

\begin{theorem}[Laplace transform under Poisson resetting]\label{thm:ct-pgf}
For any base process $T_0$ under Poisson resetting at rate $r$,
\begin{equation}\label{eq:ct-lt}
  \hat{f}_T(\lambda) = \frac{(\lambda + r)\,\hat{f}_0(\lambda + r)}{\lambda + r\,\hat{f}_0(\lambda + r)}, \qquad \sigma := \lambda + r.
\end{equation}
\end{theorem}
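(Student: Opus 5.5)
The plan mirrors the discrete proof of Theorem~\ref{thm:pgf}: use the attempt decomposition~\eqref{eq:decomp} in continuous time, compute the Laplace transforms of the successful-attempt length $B$ and of a generic failed-attempt length $A$, and combine them through the geometric-sum identity $E[\hat f_A(\lambda)^N] = p/(1-(1-p)\hat f_A(\lambda))$ with $N\sim\Geom_0(p)$. Independence of $N$, $(A_i)$ and $B$ follows exactly as in the discrete case. Each attempt starts afresh, and by the memorylessness of the Poisson clock the reset clock restarts as well. Hence $\hat f_T = \hat f_B\cdot E[\hat f_A^N]$.

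For the two transforms, let $\tau\sim\Exp(r)$ be independent of $T_0$. The successful attempt is $T_0$ conditioned on $T_0<\tau$:
\[
  p\,\hat f_B(\lambda) = E\bigl[e^{-\lambda T_0}\mathbf 1_{\{T_0<\tau\}}\bigr] = E\bigl[e^{-\lambda T_0}e^{-rT_0}\bigr] = \hat f_0(\sigma).
\]
The failed attempt is $\tau$ conditioned on $\tau\le T_0$:
\[
  (1-p)\,\hat f_A(\lambda) = E\Bigl[\int_0^{T_0} r e^{-rt}e^{-\lambda t}\,dt\Bigr] = \frac{r}{\sigma}\bigl(1-\hat f_0(\sigma)\bigr),
\]
which is the continuous analogue of~\eqref{eq:GA}. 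The event $\tau=T_0$ has probability zero because $\tau$ is continuous, so ties are harmless.

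Substituting these into the product formula gives
\[
  \hat f_T(\lambda) = \frac{\hat f_0(\sigma)}{1-\frac{r}{\sigma}(1-\hat f_0(\sigma))} = \frac{\sigma\,\hat f_0(\sigma)}{\sigma - r + r\hat f_0(\sigma)}.
\]
Since $\sigma-r=\lambda$, this is~\eqref{eq:ct-lt}. This is the same simplification that used $s+q=1$ in the discrete proof.

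The only step needing care is justifying the geometric-sum identity and the independence structure, i.e.\ that the attempt decomposition holds in continuous time. The paper has already asserted this in \S\ref{sec:ct-setup}. As an independent check, I would also give a one-line renewal derivation. Conditioning on whether $T_0<\tau$ gives
\[
  \hat f_T(\lambda) = \hat f_0(\sigma) + \frac{r}{\sigma}\bigl(1-\hat f_0(\sigma)\bigr)\hat f_T(\lambda),
\]
and solving this linear equation reproduces the formula. It also confirms that the formula is affine in $\hat f_0(\sigma)$ in both numerator and denominator, with matching coefficient $r$ at $\lambda=0$.
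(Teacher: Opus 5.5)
Your proposal is correct and follows the paper's proof essentially line by line. It uses the same attempt decomposition, the same tilted transform $p\,\hat f_B(\lambda)=\hat f_0(\sigma)$, the same failed-attempt computation $(1-p)\,\hat f_A(\lambda)=r(1-\hat f_0(\sigma))/\sigma$, and the same simplification via $\sigma-r=\lambda$; your renewal-equation cross-check is a valid extra that avoids the geometric-sum identity, but the paper does not need it.
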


\begin{proof}
The successful-attempt Laplace transform is $\hat{f}_B(\lambda) = \hat{f}_0(\sigma)/p$ (tilting by $e^{-rT_0}/p$).  For the failed attempt, $A = \tau \mid \tau < T_0$ with $\tau \sim \Exp(r)$ independent of $T_0$:
\[
  (1-p)\,\hat{f}_A(\lambda) = E[e^{-\lambda\tau}\mathbf{1}_{\tau < T_0}] = \frac{r(1 - \hat{f}_0(\sigma))}{\sigma}.
\]
Composing via $\hat{f}_T = \hat{f}_B \cdot p/[1-(1-p)\hat{f}_A]$, the denominator becomes $1 - r(1-\hat{f}_0(\sigma))/\sigma = (\lambda + r\hat{f}_0(\sigma))/\sigma$, using $\sigma - r = \lambda$, which gives~\eqref{eq:ct-lt}.

\end{proof}

\begin{corollary}\label{cor:ct-ET}
$E[T] = (1-p)/(rp)$, depending on $T_0$ only through $p = \hat{f}_0(r)$.
\end{corollary}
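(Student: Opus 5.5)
The plan is to mirror the proof of Corollary~\ref{cor:ET}: differentiate the Laplace transform~\eqref{eq:ct-lt} at $\lambda=0$ and read off the mean. It is convenient to subtract $1$ first and put everything over a common denominator, so that the numerator visibly carries a factor of $\lambda$.

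Write $h(\lambda)=\hat f_0(\lambda+r)$, so that $h(0)=p$. Then
\[
  \hat f_T(\lambda)-1=\frac{(\lambda+r)h(\lambda)-\lambda-rh(\lambda)}{\lambda+rh(\lambda)}=\frac{\lambda\,(h(\lambda)-1)}{\lambda+rh(\lambda)}.
\]
This is the continuous-time analogue of the identity $1-u-qw=1-w$ used in the discrete case. Since $E[T]=-\hat f_T'(0)=\lim_{\lambda\downarrow0}(1-\hat f_T(\lambda))/\lambda$, letting $\lambda\to0$ gives
\[
  E[T]=\frac{1-h(0)}{r\,h(0)}=\frac{1-p}{rp}.
\]
We need $p=\hat f_0(r)\in(0,1)$, which holds because $T_0$ is positive and almost surely finite; this makes the denominator limit $rp$ nonzero.

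For rigor, two points should be noted. First, $h$ is continuous at $0$, because $\hat f_0$ is analytic for $\operatorname{Re}\sigma>0$ and $r>0$. Second, the identity $E[T]=\lim_{\lambda\downarrow0}(1-\hat f_T(\lambda))/\lambda$ holds in $[0,\infty]$ by monotone convergence, since $(1-e^{-\lambda T})/\lambda\uparrow T$. Because the limit computed above is finite, $E[T]$ is finite and equals $(1-p)/(rp)$.

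As a cross-check, one can use Wald's identity, as in Proposition~\ref{prop:ET-general}. The expected attempt length is $E[\min(T_0,\tau)]=E[(1-e^{-rT_0})/r]=(1-p)/r$, and dividing by $p$ gives the same value.

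The claim that the result depends on $T_0$ only through $p$ is then immediate from the formula. The main point needing care is justifying the interchange of limits and differentiation at the boundary $\lambda=0$. The monotone-convergence formulation above avoids any need to assume $E[T]<\infty$ a priori, so I do not expect a real obstacle.
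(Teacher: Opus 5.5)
Your proof is correct, but your main argument takes a different route from the paper's. The paper proves the corollary by the renewal argument alone. It writes $E[T]=E[\ell]/p$ via the attempt decomposition (Wald, as in Proposition~\ref{prop:ET-general}) and asserts $E[\ell]=(1-p)/r$ as the expected attempt length. That is exactly your ``cross-check,'' and your computation $E[\min(T_0,\tau)]=E[(1-e^{-rT_0})/r]$ supplies the step the paper leaves implicit.

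Your primary route instead differentiates the closed form~\eqref{eq:ct-lt}, mirroring Corollary~\ref{cor:ET}. The factorization $\hat f_T(\lambda)-1=\lambda\bigl(\hat f_0(\sigma)-1\bigr)/\bigl(\lambda+r\hat f_0(\sigma)\bigr)$ is right. The monotone-convergence formulation $E[T]=\lim_{\lambda\downarrow 0}(1-\hat f_T(\lambda))/\lambda$ handles the boundary point $\lambda=0$ cleanly. It also yields $E[T]<\infty$ without a separate integrability check. Your computation is also the $k=1$ instance of Lemma~\ref{lem:affine-ratio-rec}: $\hat f_T'(0)=\Delta_1/D(0)=-(1-p)/(rp)$, with $\Delta_1=-(1-p)$ from Theorem~\ref{thm:ct-drp} and $D(0)=rp$. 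So it fits the paper's algebraic narrative.

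What the paper's route buys is independence from the Poisson closed form. The identity $E[T]=E[F(T_0)]/E[S(T_0)]$ holds for any hazard rate, and it is precisely the formula the (c)$\Rightarrow$(a) step of Theorem~\ref{thm:ct-char} relies on. Your derivation, by contrast, is tied to the specific transform of Theorem~\ref{thm:ct-pgf}.
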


\begin{proof}
By the attempt decomposition, $E[T] = E[\ell]/p$ with $E[\ell] = (1-p)/r$ (the expected attempt length under $\Exp(r)$ catastrophe), giving $E[T] = (1-p)/(rp)$.
\end{proof}

As in the discrete case (\S\ref{sec:affine}), formula~\eqref{eq:ct-lt} is affine in $\hat{f}_0(\sigma)$ with linear shift $\sigma = \lambda + r$:
\begin{equation}\label{eq:ct-affine}
  N(\lambda) = \underbrace{\sigma}_{\alpha_N(\lambda)} \cdot \hat{f}_0(\sigma) + \underbrace{0}_{\beta_N(\lambda)}, \qquad
  D(\lambda) = \underbrace{r}_{\alpha_D(\lambda)} \cdot \hat{f}_0(\sigma) + \underbrace{\lambda}_{\beta_D(\lambda)}.
\end{equation}
At the normalization point $\lambda = 0$: $\beta_N(0) = 0 = \beta_D(0)$, so $\hat{f}_T(0) = 1$ forces
\begin{equation}\label{eq:ct-coeff-match}
  \alpha_N(0) = \alpha_D(0) = r,
\end{equation}
the continuous-time counterpart of~\eqref{eq:coeff-match}.  Since the transform has the same affine-ratio structure with linear shift and coefficient matching at $\lambda=0$, Lemma~\ref{lem:affine-ratio-rec} applies in continuous time as well.

\subsection{Derivative Reduction}\label{sec:ct-drp}

\begin{theorem}[Continuous-time DRP]\label{thm:ct-drp}
Define $D_k = \hat{f}_0^{(k)}(r)$ and $\Delta_k = N^{(k)}(0) - D^{(k)}(0)$.  For all $k \geq 1$,
\begin{equation}\label{eq:ct-Delta}
  \Delta_k =
  \begin{cases}
    -(1-p), & k = 1, \\
    k\,D_{k-1}, & k \geq 2.
  \end{cases}
\end{equation}
In particular, $\Delta_k$ does not contain $D_k$.
\end{theorem}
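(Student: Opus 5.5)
We mirror the proof of Theorem~\ref{thm:drp}, differentiating the numerator and denominator of~\eqref{eq:ct-affine} with the Leibniz rule at $\lambda = 0$ and then subtracting. Write $N(\lambda) = f_1(\lambda)\,\hat f_0(\lambda + r)$ with $f_1(\lambda) = \lambda + r$, and $D(\lambda) = \lambda + f_2(\lambda)\,\hat f_0(\lambda+r)$ with $f_2 \equiv r$. The shift $\sigma = \lambda + r$ has unit slope, so $\frac{d^j}{d\lambda^j}\hat f_0(\lambda+r) = \hat f_0^{(j)}(\lambda+r)$. At $\lambda=0$ this equals $D_j$, with no scaling factors and no Fa\`a di Bruno terms.

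\emph{Case $k \geq 2$.} We have $f_1(0) = r$, $f_1' = 1$, and $f_1^{(j)} = 0$ for $j \geq 2$, so only two Leibniz terms survive:
\[
N^{(k)}(0) = r D_k + k D_{k-1}.
\]
For the denominator, the linear term $\lambda$ has vanishing $k$-th derivative when $k \geq 2$, and $f_2$ is constant, so $D^{(k)}(0) = r D_k$. Subtracting gives $\Delta_k = k D_{k-1}$. The $D_k$ terms cancel because both carry the coefficient $r$, which is exactly the coefficient equality~\eqref{eq:ct-coeff-match}. Equivalently, this is an instance of Lemma~\ref{lem:affine-ratio-rec} with $\kappa = 1$ and $z_* = 0$.

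\emph{Case $k = 1$.} Here the linear term contributes. We get $N'(0) = \hat f_0(r) + rD_1 = p + rD_1$ and $D'(0) = 1 + rD_1$. Hence $\Delta_1 = p - 1 = -(1-p)$, and $D_1$ again cancels.

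There is no real obstacle. The only care needed is to track which terms survive at $k=1$ versus $k\ge2$: the $\beta_D(\lambda)=\lambda$ term contributes only at first order. A useful sanity check is the sign convention. Since $\hat f_T'(0) = -E[T]$, the recursion of Lemma~\ref{lem:affine-ratio-rec} at $k=1$ gives $\hat f_T'(0) = \Delta_1/D(0) = -(1-p)/(rp)$, which agrees with Corollary~\ref{cor:ct-ET}.
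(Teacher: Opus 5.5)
Your proposal is correct and follows the paper's own proof essentially verbatim. Both expand $N(\lambda)=(\lambda+r)\hat f_0(\lambda+r)$ and $D(\lambda)=\lambda+r\hat f_0(\lambda+r)$ at $\lambda=0$ via Leibniz, cancel the $rD_k$ terms using $\alpha_N(0)=\alpha_D(0)=r$, and handle $k=1$ separately to get $\Delta_1=p-1$; your closing check against $E[T]=(1-p)/(rp)$ is a nice confirmation of the sign convention.
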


\begin{proof}
Since $N(\lambda) = (\lambda + r)\hat{f}_0(\lambda + r)$ and $\sigma = \lambda + r$ is linear ($\sigma' = 1$), the Leibniz rule gives $N^{(k)}(0) = rD_k + kD_{k-1}$.  For the denominator: $D^{(1)}(0) = 1 + rD_1$ and $D^{(k)}(0) = rD_k$ for $k \geq 2$.  Hence $\Delta_k = kD_{k-1}$ for $k \geq 2$ (the $D_k$ terms cancel because $\alpha_N(0) = \alpha_D(0) = r$), and $\Delta_1 = (rD_1 + p) - (1 + rD_1) = p - 1$.
\end{proof}

\begin{remark}[Comparison with the discrete DRP]\label{rem:ct-drp-compare}
The discrete formula $\Delta_k = -ks^{k-1}D_{k-1}$ (Theorem~\ref{thm:drp}) contains the factor $s^{k-1}$ from the multiplicative shift $u = ws$ (chain rule coefficient $s$).  The continuous formula $\Delta_k = kD_{k-1}$ has no such factor because the additive shift $\sigma = \lambda + r$ has chain rule coefficient $1$.  The sign difference ($-k$ vs.\ $+k$) reflects opposite monotonicity conventions: $G_T'(1) = E[T] > 0$ while $\hat{f}_T'(0) = -E[T] < 0$.
\end{remark}

\subsection{Characterization: Affine Laplace Structure Is Poisson}\label{sec:ct-char}

In discrete time, affine PGF structure characterizes geometric-tail catastrophe---constant hazard from the second step onward---leaving the first-step hazard $h_1$ as a free parameter (Theorem~\ref{thm:char}).  In continuous time, this residual degree of freedom vanishes.

\begin{definition}[Uniform affine Laplace structure]\label{def:ct-affine}
A continuous-time catastrophe mechanism has \emph{uniform affine Laplace structure} if there exist functions \(a(\lambda),b(\lambda),c(\lambda),d(\lambda)\), analytic in a neighborhood of \(\lambda=0\), depending only on the mechanism and a linear function $\sigma(\lambda) = \lambda + r$ ($r > 0$), such that for all base processes $T_0$,
\[
  \hat{f}_T(\lambda) = \frac{a(\lambda)\,\hat{f}_0(\sigma) + b(\lambda)}{c(\lambda)\,\hat{f}_0(\sigma) + d(\lambda)},
\]
with $a, c$ not identically zero and $c(0)\hat{f}_0(r) + d(0) \neq 0$ for all admissible $\hat{f}_0$.
\end{definition}

\begin{remark}[Why the shift has unit slope]\label{rem:unit-slope}
The restriction $\sigma(\lambda) = \lambda + r$ is not \emph{ad hoc}.  Starting from a general linear shift $\sigma(\lambda) = a\lambda + b$ with $a, b > 0$, the single-point sufficiency condition $p = \hat{f}_0(b)$ for all $T_0$ forces $S(x) = e^{-bx}$ (take $T_0 = \delta_x$), identifying $b$ with the catastrophe rate $r$.  The Poisson catastrophe time then contributes a factor $e^{-rt}$ that combines with the Laplace kernel $e^{-\lambda t}$ into $e^{-(\lambda+r)t}$, giving $a = 1$ without further assumptions.
\end{remark}

For the characterization result below, we temporarily enlarge the admissible
class of base laws: the quantifiers over $T_0$ range over all positive,
almost surely finite random variables, including point masses and two-point
mixtures.  Continuity of $T_0$ is imposed only later, in the lattice-free
sharp-rate statement.

\begin{theorem}[Continuous-time characterization]\label{thm:ct-char}
Among continuous-time age-based catastrophe mechanisms specified by a measurable, locally integrable hazard rate $h: (0,\infty) \to (0,\infty)$ (so that $S(x) = \exp(-\int_0^x h(t)\,dt)$ is well-defined, strictly decreasing, and absolutely continuous), the following are equivalent:
\begin{enumerate}[\rm (a)]
  \item $S(x) = e^{-rx}$ for all $x > 0$ (Poisson resetting; equivalently,
  $h(t) = r$ for a.e.\ $t > 0$).
  \item Uniform affine Laplace structure (Definition~\ref{def:ct-affine}).
  \item Derivative reduction: there exists $r_0 > 0$ such that for every
  positive, almost surely finite $T_0$ and every $k \ge 1$, the $k$-th
  moment of $T$ is determined by
  $\{\hat f_0^{(j)}(r_0)\}_{j=0}^{k-1}$.
  \item Single-point sufficiency: there exists $r_0 > 0$ such that
  $p = \hat f_0(r_0)$ for every positive, almost surely finite $T_0$.
\end{enumerate}
\end{theorem}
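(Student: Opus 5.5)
The plan follows the discrete proof of Theorem~\ref{thm:char}: prove the cycle (a)$\Rightarrow$(b)$\Rightarrow$(c)$\Rightarrow$(a), and separately (a)$\Leftrightarrow$(d).

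\emph{(a)$\Rightarrow$(b).} This is Theorem~\ref{thm:ct-pgf}, with $a(\lambda)=\lambda+r$, $b\equiv0$, $c\equiv r$, $d(\lambda)=\lambda$. The non-degeneracy quantity is $D(0)=r\hat f_0(r)>0$.

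\emph{(a)$\Rightarrow$(d).} This is \eqref{eq:p-ct} with $r_0=r$.

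\emph{(d)$\Rightarrow$(a).} For a general age-based mechanism, $p=E[S(T_0)]$. Taking $T_0=\delta_x$ gives $S(x)=e^{-r_0x}$ for every $x>0$. Hence $h=r_0$ a.e., since $S$ is absolutely continuous and $h=-(\ln S)'$ a.e. Here the whole first-step freedom vanishes, because $x$ ranges over a continuum starting at $0$ and $S(0)=1$ is forced.

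\emph{(b)$\Rightarrow$(c).} Copy the discrete argument. Normalization $\hat f_T(0)=1$ for all $T_0$ gives $a(0)\hat f_0(r)+b(0)=c(0)\hat f_0(r)+d(0)$. The value $\hat f_0(r)=e^{-rx}$ ranges over all of $(0,1)$ as $x$ varies, so $a(0)=c(0)$ and $b(0)=d(0)$. Non-degeneracy then lets Lemma~\ref{lem:affine-ratio-rec} apply with $z_*=0$, $u(\lambda)=\lambda+r$, $\kappa=1$. This makes $\hat f_T^{(k)}(0)=(-1)^kE[T^k]$ a function of $\{\hat f_0^{(j)}(r)\}_{j<k}$, so $r_0=r$.

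\emph{(c)$\Rightarrow$(a).} This is the main step, and I use only $k=1$. As in Lemma~\ref{lem:attempt-length} and Proposition~\ref{prop:ET-general}, Wald gives $E[T]=E[F(T_0)]/E[S(T_0)]$ with $F(x)=\int_0^xS(t)\,dt$. Finiteness is automatic for bounded supports, which suffice here. Hypothesis (c) then says
\[
\frac{E[F(T_0)]}{E[S(T_0)]}=\psi\bigl(E[e^{-r_0T_0}]\bigr).
\]
\begin{itemize}
\item Point masses give $\psi(e^{-r_0x})=F(x)/S(x)$ for every $x>0$, so $\psi$ is determined on all of $(0,1)$.
\item Two-point mixtures $\alpha\delta_x+(1-\alpha)\delta_y$ make $\psi$ a M\"obius map $M_{xy}$ on $[e^{-r_0y},e^{-r_0x}]$, exactly as in \eqref{eq:mobius-local}. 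Its denominator slope is $\propto S(x)-S(y)\neq0$.
\item Overlapping intervals and three-point determination glue these into a single M\"obius $M$ on $(0,1)$ (Step 3 of the discrete proof).
\item Equality of the normalized coefficients forces $(F(x)-F(y))/(S(x)-S(y))=\rho$ constant, i.e.\ $F=\rho S+c$ on $(0,\infty)$.
\end{itemize}
The key continuous-time difference comes next. I differentiate instead of differencing. $F$ is $C^1$ with $F'=S$, and $S$ is absolutely continuous with $S'=-hS$ a.e. This gives $S=-\rho hS$, hence $h=-1/\rho=:r$ a.e., a positive constant.

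The decisive extra point is at $x\to0^+$. Continuity gives $F(0)=0$ and $S(0)=1$, so $c=-\rho$. Moreover, the relation $F=\rho S+c$ holds on all of $(0,\infty)$, which includes arbitrarily small ages. So no initial atom of hazard can survive, unlike the discrete step $h_1$. This yields $S(x)=e^{-rx}$.

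The hardest part is the rigor in the gluing and differentiation: checking that $M$ is non-degenerate (not constant), and justifying the a.e.\ derivative identity only from local integrability of $h$. I would handle non-degeneracy by noting that $F/S$ is strictly increasing in $x$, since its derivative is $1+hF/S>0$. So $\psi$ is non-constant on $(0,1)$, and the M\"obius representation is genuinely three-point determined.
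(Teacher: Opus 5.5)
Your proposal is correct and follows the paper's proof essentially step for step: you use the same cycle plus (a)$\Leftrightarrow$(d), point masses and two-point mixtures forcing a glued M\"obius $\psi$, coefficient consistency giving $F=\rho S+c$, and differentiation yielding $h=-1/\rho$ a.e. Your additions are harmless but not needed: getting $a(0)=c(0)$ and $b(0)=d(0)$ at once from the full range of $e^{-rx}$, the bounded-support finiteness remark, the non-constancy of $F/S$, and $c=-\rho$ from $x\to0^+$. Once $h=r$ a.e., $S(x)=\exp(-\int_0^x h)=e^{-rx}$ follows directly.
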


\begin{proof}
(a)$\Rightarrow$(b): Theorem~\ref{thm:ct-pgf}.

(b)$\Rightarrow$(c): Suppose
\[
  \hat f_T(\lambda)=\frac{a(\lambda)\hat f_0(\sigma(\lambda))+b(\lambda)}
  {c(\lambda)\hat f_0(\sigma(\lambda))+d(\lambda)},
  \qquad \sigma(\lambda)=\lambda+r_0.
\]
Applying $\hat f_T(0)=1$ to point masses $T_0=\delta_x$ and letting $x\to\infty$ gives $b(0)=d(0)$; back-substitution yields $a(0)=c(0)$. By non-degeneracy, Lemma~\ref{lem:affine-ratio-rec} applies with $R=\hat f_T$, $h=\hat f_0$, $z_*=0$, and $u_0=r_0$. Hence $\hat f_T^{(k)}(0)$ depends only on $\{\hat f_0^{(j)}(r_0)\}_{j=0}^{k-1}$, which is exactly the derivative-reduction property.

(a)$\Rightarrow$(d): By definition~\eqref{eq:p-ct},
\[
  p=P(T_0<\tau)=E[e^{-rT_0}]=\hat f_0(r).
\]

(d)$\Rightarrow$(a): Under (d), for every positive, almost surely finite
$T_0$ we have
\[
  E[S(T_0)] = p = \hat f_0(r_0)=E[e^{-r_0 T_0}].
\]
Taking $T_0=\delta_x$ gives $S(x)=e^{-r_0 x}$ for every $x>0$.

(c)$\Rightarrow$(a): We use only the $k=1$ case.  Thus there exist
$r_0>0$ and a function $\psi:(0,1)\to\mathbb R$ such that
\[
  E[T]=\psi(\hat f_0(r_0))
\]
for every positive, almost surely finite $T_0$.
For a general continuous-time age-based catastrophe mechanism,
\[
  p=E[S(T_0)],\qquad
  E[\ell\,|\,T_0=x]=F(x):=\int_0^x S(t)\,dt,
\qquad
  E[T]=\frac{E[\ell]}{p}
      =\frac{E[F(T_0)]}{E[S(T_0)]}.
\]
Hence
\begin{equation}\label{eq:ct-char-psi}
  \frac{E[F(T_0)]}{E[S(T_0)]}
  =\psi\!\bigl(E[e^{-r_0 T_0}]\bigr)
\end{equation}
for all positive, almost surely finite $T_0$.

\emph{Step 1: point masses determine $\psi$ on $(0,1)$.}
Taking $T_0=\delta_x$ in~\eqref{eq:ct-char-psi} gives
\[
  \psi(e^{-r_0 x})=\frac{F(x)}{S(x)}, \qquad x>0.
\]
Since $x\mapsto e^{-r_0 x}$ is a bijection $(0,\infty)\to(0,1)$, point
masses determine $\psi$ on all of $(0,1)$.

\emph{Step 2: two-point laws force $\psi$ to be M\"obius.}
Take $T_0=\alpha\delta_a+(1-\alpha)\delta_b$ with $0<a<b$.  Writing
\[
  v=\alpha e^{-r_0 a}+(1-\alpha)e^{-r_0 b}\in[e^{-r_0 b},e^{-r_0 a}],
\]
equation~\eqref{eq:ct-char-psi} becomes
\[
  \psi(v)
  =
  \frac{\alpha F(a)+(1-\alpha)F(b)}
       {\alpha S(a)+(1-\alpha)S(b)}.
\]
Solving for $\alpha$ in terms of $v$ shows that the right-hand side is a
linear-fractional function of $v$, namely
\[
  \psi(v)
  =
  \frac{(F(a)-F(b))\,v + F(b)e^{-r_0 a}-F(a)e^{-r_0 b}}
       {(S(a)-S(b))\,v + S(b)e^{-r_0 a}-S(a)e^{-r_0 b}}.
\]
Thus $\psi$ is M\"obius on every interval
$[e^{-r_0 b},e^{-r_0 a}]$.

\emph{Step 3: overlapping intervals glue the local M\"obius maps globally.}
Exactly as in Steps~2--4 of Theorem~\ref{thm:char}, the overlapping-interval
consistency argument shows that all these local M\"obius maps coincide with a
single global one:
\[
  \psi(v)=\frac{Av+B}{Cv+D},\qquad v\in(0,1).
\]
Because $S$ is strictly decreasing, the denominator slope in the two-point
formula is nonzero, so the global denominator coefficient satisfies $C\neq0$.

\emph{Step 4: coefficient consistency forces $F=\rho S + c$.}
Comparing the coefficients of $v$ in the two-point representation with the
global M\"obius map shows that
\[
  \frac{F(a)-F(b)}{S(a)-S(b)}
\]
is independent of the pair $(a,b)$ with $a\neq b$.  Denoting this constant by
$\rho$, we obtain
\[
  F(x)=\rho\,S(x)+c,\qquad x>0
\]
for some constant $c$.

\emph{Step 5: differentiation yields constant hazard everywhere.}
Since $F(x)=\int_0^x S(t)\,dt$ is absolutely continuous and
$F'(x)=S(x)$ a.e., differentiating $F=\rho S+c$ gives
\[
  S(x)=\rho\,S'(x)\qquad\text{for a.e.\ }x>0.
\]
As $S'(x)=-h(x)S(x)$ a.e.\ and $S(x)>0$, we conclude
\[
  h(x)=-\frac1\rho=:r
  \qquad\text{for a.e.\ }x>0.
\]
Hence $S(x)=e^{-rx}$ for all $x>0$, which is exactly (a).
\end{proof}

\begin{remark}[Why the geometric-tail gap closes]\label{rem:gap-closure}
The geometric-tail gap of Theorem~\ref{thm:char} is a joint artifact of two discrete-time limitations: countable test points (requiring M\"obius extension) and integer arithmetic (permitting only differencing).  In continuous time, both limitations vanish simultaneously, forcing full Poisson resetting with no residual degree of freedom.
\end{remark}

\subsection{Exponential Approximation}\label{sec:ct-exp}

The exponential approximation theory of \S\ref{sec:exp-approx} has a continuous-time counterpart.  Define the continuous-time perturbation parameters:
\begin{equation}\label{eq:ct-alpha}
  \alpha = \frac{p + rD_1}{1-p}, \qquad \beta = \frac{1 + rD_1}{1-p},
\end{equation}
where $D_1 = \hat{f}_0'(r)$.  Note that $D_1 = -E[T_0 e^{-rT_0}] < 0$, so the sign conventions in the discrete and continuous perturbation parameters are consistent: both $\alpha$ vanish when $b/\mu$ matches the geometric target (Remark~\ref{rem:alpha-interp}). 
The fundamental identity 
\begin{equation}\label{lem:ct-identity}
    1 + \alpha - \beta = 0 
\end{equation} holds by the same one-line computation as Proposition~\ref{prop:alpha-properties}\eqref{lem:identity}, replacing $s+q=1$ by direct cancellation of $rD_1$. Differentiating $\hat{f}_B(\lambda) = \hat{f}_0(\lambda+r)/p$ at $\lambda = 0$ gives $b = -D_1/p$, so
\begin{equation}\label{lem:ct-b-mu}
  \frac{b}{\mu} = \frac{-rD_1}{1-p} = \frac{p}{1-p} - \alpha,
\end{equation}
the continuous-time analog of Proposition~\ref{prop:alpha-properties}\eqref{lem:b-mu}.

\begin{theorem}[Continuous-time Laplace expansion]\label{thm:ct-laplace}
Under Poisson resetting at rate $r$, let $W = T/E[T]$.  For $\epsilon := \theta rp/(1-p) \leq \epsilon_0$,
\begin{equation}\label{eq:ct-laplace}
  E[e^{-\theta W}] - \frac{1}{1+\theta}
  = \frac{\alpha\,\theta^2}{(1+\theta)^2}
  + O\!\left(\frac{|\alpha|^2\,\theta^3}{(1+\theta)^3}\right)
  + O\!\left(\frac{p\,\theta^2}{1+\theta}\right),
\end{equation}
with implicit constants depending on $|D_1|$ and $|D_2|$, which satisfy $|D_k| = |\hat{f}_0^{(k)}(r)| = E[T_0^k e^{-rT_0}] \leq (k/(er))^k$.
\end{theorem}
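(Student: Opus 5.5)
The proof transcribes the discrete argument of Theorem~\ref{thm:laplace} into the Laplace-transform setting of Theorem~\ref{thm:ct-pgf}. Set $\mu=E[T]=(1-p)/(rp)$ (Corollary~\ref{cor:ct-ET}) and $\epsilon=\theta/\mu=\theta rp/(1-p)$. Then $E[e^{-\theta W}]=\hat f_T(\epsilon)=N(\epsilon)/D(\epsilon)$, with $N(\epsilon)=(\epsilon+r)\hat f_0(\epsilon+r)$ and $D(\epsilon)=\epsilon+r\hat f_0(\epsilon+r)$ as in~\eqref{eq:ct-affine}. Since the shift $\sigma=\epsilon+r$ is additive, Taylor's theorem with Lagrange remainder gives
\[
  \hat f_0(r+\epsilon)=p+D_1\epsilon+R(\epsilon),\qquad |R(\epsilon)|\le \tfrac12\sup_{\sigma\ge r}|\hat f_0''(\sigma)|\,\epsilon^2\le \tfrac12|D_2|\,\epsilon^2 .
\]
The last inequality holds because $|\hat f_0''(\sigma)|=E[T_0^2e^{-\sigma T_0}]$ is decreasing in $\sigma$. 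This monotonicity is cleaner than the discrete case, where $u$ moves below $s$.

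First expand numerator and denominator. The numerator is $N(\epsilon)=rp+(p+rD_1)\epsilon+O(\epsilon^2)$, with the $\epsilon^2$ coefficient $D_1+\tfrac r2 D_2$ up to the remainder. The denominator is $D(\epsilon)=rp+(1+rD_1)\epsilon+O(\epsilon^2)$. Dividing by $rp$ and substituting $\epsilon/(rp)=\theta/(1-p)$ gives
\[
  \hat f_T(\epsilon)=\frac{1+\alpha\theta+R_N(\theta)}{1+\beta\theta+R_D(\theta)},
\]
with $\alpha,\beta$ as in~\eqref{eq:ct-alpha}. The remainders satisfy $|R_N|,|R_D|\le C\epsilon^2/(rp)=C'\theta^2 p$. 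Here $C'$ depends on $r,|D_1|,|D_2|$, and we use $p\le 1/2$ say.

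Next apply the algebra of the discrete proof verbatim. By~\eqref{lem:ct-identity}, $\beta=1+\alpha$. The difference $\hat f_T-1/(1+\theta)$ therefore has numerator $\alpha\theta^2+(1+\theta)R_N-R_D$ and denominator $(1+\theta)(1+\beta\theta+R_D)$. Expanding $1/(1+\alpha\theta/(1+\theta))$ yields the leading term $\alpha\theta^2/(1+\theta)^2$ and the $O(|\alpha|^2\theta^3/(1+\theta)^3)$ term. The remainder contributes $O(p\theta^2/(1+\theta))$ once $|1+\beta\theta+R_D|\ge(1+\theta)/2$ is established.

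The main obstacle is this denominator lower bound, i.e.\ showing $|\alpha|\theta/(1+\theta)+|R_D|/(1+\theta)\le 1/2$. For the first term I would use $\alpha=b'/\mu$-type control: by~\eqref{lem:ct-b-mu}, $\alpha=p/(1-p)-b/\mu$. Also $|rD_1|=E[rT_0e^{-rT_0}]\le 1/e$, so $|\alpha|\le (p+1/e)/(1-p)$. This is not automatically below $1/4$, so I would either impose smallness of $p$ and use the sign structure ($\alpha\theta>-\,(1/e)\theta/(1-p)$ keeps $1+\beta\theta=1+\theta+\alpha\theta$ bounded below by a fixed multiple of $1+\theta$ when $p$ is small), or absorb the constraint into the constants. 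For the second term, $|R_D|\le C'\theta^2p=C'\theta\epsilon(1-p)/r$, so $|R_D|/(1+\theta)\le C''\epsilon\le C''\epsilon_0$, which is small for $\epsilon_0$ chosen small in terms of $r,|D_1|,|D_2|$. Finally, the bound $|D_k|\le (k/(er))^k$ follows from $\max_t t^ke^{-rt}=(k/(er))^k$.
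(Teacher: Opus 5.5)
Your proposal is correct and follows essentially the same route as the paper: Taylor-expand $\hat f_0(r+\epsilon)$, divide $N$ and $D$ by $rp$ to obtain $(1+\alpha\theta+R_N)/(1+\beta\theta+R_D)$, use $\beta=1+\alpha$, and rerun the discrete remainder analysis. Where you differ is the denominator lower bound. The paper simply says this step is ``identical'' to the discrete proof, which there assumed $p\le p_0$ so that $|\alpha|\le 1/4$; that assumption is left implicit in the continuous-time statement. You instead bound the denominator directly from the sign information $rD_1\ge -1/e$ with $p\le 1/2$, which makes that step explicit without needing $|\alpha|$ small.
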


\begin{proof}
Set $\mu = (1-p)/(rp)$, $\epsilon = \theta/\mu$, and expand $\hat{f}_0(r+\epsilon) = p + D_1\epsilon + O(\epsilon^2)$.  Then $N(\epsilon) = (r+\epsilon)(p + D_1\epsilon + O(\epsilon^2)) = rp + (p+rD_1)\epsilon + O(\epsilon^2)$ and $D(\epsilon) = \epsilon + r(p + D_1\epsilon + O(\epsilon^2)) = rp + (1+rD_1)\epsilon + O(\epsilon^2)$.  Dividing by $rp$ and substituting $\epsilon = \theta rp/(1-p)$ yields the ratio $(1+\alpha\theta+R_N)/(1+\beta\theta+R_D)$ with $|R_N|, |R_D| = O(\theta^2 p)$.  The identity $1+\alpha-\beta = 0$ ensures the $\theta^1$-coefficient vanishes.  The remainder analysis is identical to the proof of Theorem~\ref{thm:laplace}.
\end{proof}

\begin{lemma}[Continuous-time $\alpha \to 0$]\label{lem:ct-alpha}
For fixed $r > 0$, $|\alpha| = O(p|\ln p|)$ as $p = \hat{f}_0(r) \to 0$.
\end{lemma}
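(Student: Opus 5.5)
We mirror the proof of Proposition~\ref{prop:alpha-properties}\eqref{lem:alpha-vanish}, replacing the discrete variable $s^{T_0}$ by $Y := e^{-rT_0} \in (0,1)$. Then $p = E[Y]$. Also $-D_1 = E[T_0 e^{-rT_0}] = E[-Y\ln Y]/r$, since $T_0 = -\ln Y / r$. The target quantity is
\[
  \alpha = \frac{p + rD_1}{1-p} = \frac{p - E[-Y\ln Y]}{1-p}.
\]
So it suffices to show that both $p$ and $E[-Y\ln Y]$ are $O(p|\ln p|)$ as $p \to 0$, uniformly in the law of $T_0$ for fixed $r$.

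\textbf{Key step (Jensen).} The function $\varphi(x) = -x\ln x$ is concave on $[0,1]$. Jensen's inequality therefore gives
\[
  0 \le E[-Y\ln Y] \le \varphi(E[Y]) = -p\ln p = p|\ln p|.
\]
The lower bound $0$ holds because $Y \in (0,1)$.

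\textbf{Conclusion.} For $p \le 1/2$, the denominator satisfies $1-p \ge 1/2$. Moreover $p \le p|\ln p|$ once $p \le e^{-1}$. Hence
\[
  |\alpha| \le 2\bigl(p + p|\ln p|\bigr) \le 4p|\ln p| \quad \text{for } p \le e^{-1}.
\]
This gives $|\alpha| = O(p|\ln p|)$, in fact with an absolute constant; the dependence on $r$ cancels because $r$ enters only through the substitution $T_0 = -\ln Y/r$.

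There is no real obstacle here. The only point needing care is the identification $rE[T_0e^{-rT_0}] = E[-Y\ln Y]$ together with the sign convention $D_1 < 0$. The Jensen step itself is the same as in the discrete case.
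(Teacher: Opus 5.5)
Your proof is correct and follows essentially the same route as the paper: substitute $Y=e^{-rT_0}$, identify $-rD_1 = E[-Y\ln Y]$, and apply Jensen to the concave function $-x\ln x$ to get $E[-Y\ln Y]\le p|\ln p|$. You also make the $1/(1-p)$ factor and the constant explicit, obtaining $|\alpha|\le 4p|\ln p|$ for $p\le e^{-1}$, whereas the paper leaves these implicit.
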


\begin{proof}
Set $Y = e^{-rT_0} \in (0,1]$, so $p = E[Y]$ and $rD_1 = E[Y\ln Y]$ (since $Y\ln Y = -rT_0 e^{-rT_0}$), giving $-rD_1 = E[-Y\ln Y]$.  Since $\varphi(x) = -x\ln x$ is concave, Jensen's inequality gives $E[-Y\ln Y] \leq -p\ln p$.  Hence $|p + rD_1| \leq p + E[-Y\ln Y] \leq p(1 + |\ln p|)$, so $|\alpha| = O(p|\ln p|)$.
\end{proof}

In continuous time, the lattice error that contributes the $\Theta(p)$ floor in the discrete two-sided bound (Theorem~\ref{thm:two-sided}) disappears: when $T_0$ has a continuous distribution, so does $T$, and the CDF of $W$ has no jumps.  The sharp rate therefore reduces to $|\alpha|$ alone.  The following lemma provides the necessary moment convergence for the Bridge Theorem.

\begin{lemma}[Escape to infinity]\label{lem:escape}
Fix $r > 0$.  Let $(T_{0,n})$ be a sequence of positive random variables with $p_n = E[e^{-rT_{0,n}}] \to 0$.  Then $T_{0,n} \to \infty$ in probability.  Consequently, for each fixed $t \geq 0$, $P(A_n \geq t) \to e^{-rt}$, and for each fixed integer $k \geq 1$, $E[A_n^k] \to k!/r^k$.
\end{lemma}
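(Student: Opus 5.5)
The plan has three parts: escape to infinity, then convergence of the tail of $A_n$, then convergence of its moments.

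\emph{Escape to infinity.} For fixed $M>0$, Markov's inequality applied to the monotone function $x\mapsto e^{-rx}$ gives
\[
  P(T_{0,n}\le M)\le e^{rM}\,E[e^{-rT_{0,n}}]=e^{rM}p_n\to 0.
\]
Hence $T_{0,n}\to\infty$ in probability.

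\emph{Tail of $A_n$.} Recall that $A_n$ is distributed as $\tau$ conditioned on $\tau<T_{0,n}$, where $\tau\sim\Exp(r)$ is independent of $T_{0,n}$. For $t\ge 0$,
\[
  P(A_n\ge t)=\frac{P(t\le\tau<T_{0,n})}{1-p_n}
  =\frac{E\bigl[(e^{-rt}-e^{-rT_{0,n}})^+\bigr]}{1-p_n}.
\]
This expression is the continuous analogue of the formula in Lemma~\ref{lem:tail}. The denominator tends to $1$. In the numerator, the integrand $(e^{-rt}-e^{-rT_{0,n}})^+$ is bounded by $e^{-rt}$ and converges in probability to $e^{-rt}$, because $T_{0,n}\to\infty$ in probability. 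Bounded convergence (applied along subsequences, or directly via the estimate $|E[\cdot]-e^{-rt}|\le E[e^{-rT_{0,n}}]=p_n$) therefore gives $P(A_n\ge t)\to e^{-rt}$. The direct estimate is in fact the simplest route, since
\[
  0\le e^{-rt}-(e^{-rt}-e^{-rT_{0,n}})^+\le e^{-rT_{0,n}}.
\]

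\emph{Moments.} Write
\[
  E[A_n^k]=\int_0^\infty k t^{k-1}P(A_n\ge t)\,dt.
\]
We need a uniform dominating function. Since $(e^{-rt}-e^{-rT_{0,n}})^+\le e^{-rt}$, we have $P(A_n\ge t)\le e^{-rt}/(1-p_n)\le 2e^{-rt}$ once $p_n\le 1/2$. The function $2kt^{k-1}e^{-rt}$ is integrable, so dominated convergence gives
\[
  E[A_n^k]\to\int_0^\infty k t^{k-1}e^{-rt}\,dt=\frac{k!}{r^k}.
\]

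The only step that needs any care is this last one, the uniform domination for the moments. It is handled by the same pointwise bound $P(A_n\ge t)\le e^{-rt}/(1-p_n)$, so I do not expect a real obstacle.
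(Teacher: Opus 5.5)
Your proof is correct and follows the paper's argument essentially step for step: the same bound $P(T_{0,n}\le x)\le e^{rx}p_n$, the same conditional-tail formula $P(A_n\ge t)=P(t\le\tau<T_{0,n})/(1-p_n)$, and the same dominator $2e^{-rt}$ for dominated convergence in $E[A_n^k]=\int_0^\infty kt^{k-1}P(A_n\ge t)\,dt$. Your pointwise estimate $0\le e^{-rt}-(e^{-rt}-e^{-rT_{0,n}})^+\le e^{-rT_{0,n}}$ is a slightly tidier form of the paper's step. It absorbs the escape-to-infinity input $P(T_{0,n}\le t)\to0$ and yields the explicit error $p_n$, but it rests on the same idea.
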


\begin{proof}
For any fixed $x > 0$, $p_n \geq e^{-rx}P(T_{0,n} \leq x)$, so $P(T_{0,n} \leq x) \leq e^{rx}p_n \to 0$.

For the failed-attempt tail: since $\tau \sim \Exp(r)$ is independent of $T_{0,n}$,
\[
  P(A_n \geq t) = \frac{P(\tau \geq t,\, \tau < T_{0,n})}{1-p_n}
  = \frac{E[(e^{-rt} - e^{-rT_{0,n}})\mathbf{1}_{\{T_{0,n} > t\}}]}{1-p_n}.
\]
As $n \to \infty$: $E[e^{-rT_{0,n}}\mathbf{1}_{\{T_{0,n} > t\}}] \leq p_n \to 0$ and $P(T_{0,n} > t) \to 1$, so the numerator tends to $e^{-rt}$ and the denominator to $1$.

The uniform bound $P(A_n \geq t) \leq e^{-rt}/(1-p_n) \leq 2e^{-rt}$ (for $p_n \leq 1/2$) provides a dominator for $t^{k-1}P(A_n \geq t)$, so dominated convergence gives $E[A_n^k] = k\int_0^\infty t^{k-1}P(A_n \geq t)\,dt \to k!/r^k$.
\end{proof}

\begin{theorem}[Continuous-time sharp bound]\label{thm:ct-sharp}
Under Poisson resetting at rate $r$, for $T_0$ with continuous distribution, 
and $|\alpha|/p \to \infty$ as $p \to 0$:
\begin{equation}\label{eq:ct-sharp}
  \dK\!\left(\frac{T}{E[T]},\, \Exp(1)\right) \in \bigl[1-o(1),\; 2+o(1)\bigr] \cdot |\alpha|.
\end{equation}
\end{theorem}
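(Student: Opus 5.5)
The plan is to sandwich $\dK(W,\Exp(1))$ between an upper bound from the continuous-time Bridge Theorem and a lower bound from a Laplace-transform evaluation. In both, the $p$-terms become $o(|\alpha|)$ under the hypothesis $|\alpha|/p\to\infty$.

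\emph{Upper bound.} First I will transfer Theorem~\ref{thm:bridge} to continuous time. Write $W=rV+B/\mu$ with $V=U/\mu_U$ and $r=1-b/\mu$. Theorem~\ref{thm:Brown} applies with $X_i=A_i$ and gives $\dK(V,\Exp(1))\le p\,\rho_2(A)\max(1,1/(2(1-p)))$. By Lemma~\ref{lem:escape}, $E[A^2]\to 2/r^2$ and $E[A]\to 1/r$, so $\rho_2(A)\to 2$. Hence $\dK(V,\Exp(1))=O(p)$ with a constant independent of the particular family; this is exactly why Lemma~\ref{lem:escape} was proved. Lemmas~\ref{lem:scaling} and~\ref{lem:additive} are distribution-free, so they give $\dK(W,\Exp(1))\le O(p)+2b/\mu$. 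By~\eqref{lem:ct-b-mu}, $b/\mu=p/(1-p)-\alpha$. Since $|\alpha|\gg p$ and $b/\mu\ge0$, eventually $\alpha<0$, and then $b/\mu=|\alpha|+O(p)$. This yields $\dK\le 2|\alpha|+O(p)=(2+o(1))|\alpha|$. I need to check that $\alpha$ must be negative eventually: $-rD_1\ge0$ forces $\alpha\le p/(1-p)$, so $|\alpha|/p\to\infty$ is only possible with $\alpha\to-\infty\cdot p$.

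\emph{Lower bound.} The constant $1$ cannot come from evaluating~\eqref{eq:laplace-lower} at $\theta=1$, which only gives $|\alpha|/4$. So I will use the full integral identity. For every $\theta$,
\[
E[e^{-\theta W}]-\tfrac1{1+\theta}=\int_0^\infty\theta e^{-\theta x}\bigl(F_E(x)-F_W(x)\bigr)\,dx,
\]
whose modulus is at most $\dK$. By Theorem~\ref{thm:ct-laplace}, the left side equals $\alpha\theta^2/(1+\theta)^2+O(|\alpha|^2\theta^3)+O(p\theta^2)$ for every $\theta$ with $\theta rp/(1-p)\le\epsilon_0$. Choose $\theta=\theta_n\to\infty$ slowly enough that $p\theta_n=o(|\alpha|)$ and $|\alpha|\theta_n=o(1)$. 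Both choices are possible because $|\alpha|/p\to\infty$ and $|\alpha|\to0$ (Lemma~\ref{lem:ct-alpha}); for example, take $\theta_n=\min\{(|\alpha|/p)^{1/2},|\alpha|^{-1/2}\}$. Then $\theta_n^2/(1+\theta_n)^2\to1$, and the error terms are $O(|\alpha|^2\theta_n^3/(1+\theta_n)^3)=O(|\alpha|^2)$ and $O(p\theta_n)=o(|\alpha|)$. Therefore $\dK\ge(1-o(1))|\alpha|$.

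\emph{Main obstacle.} The delicate step is making the remainder constants in Theorem~\ref{thm:ct-laplace} uniform as $p\to0$. Those constants depend on $|D_1|,|D_2|$, which are bounded by $(k/(er))^k$, so they depend only on $r$. With that settled, the $O(p\theta^2/(1+\theta))$ remainder is genuinely $O(p\theta)$ for large $\theta$. I would also re-derive the remainder bound with $\theta$ large, ensuring $|\alpha|\theta/(1+\theta)\le1/4$ so that the denominator stays comparable to $1+\theta$. Continuity of $T_0$ enters only in interpreting the result (no lattice floor); the argument itself does not use it beyond Lemma~\ref{lem:escape}.
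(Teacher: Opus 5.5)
Your proposal is correct and follows the paper's proof essentially step for step. For the upper bound, you transfer the Bridge Theorem via Lemma~\ref{lem:escape} (giving $\rho_2\to 2$ and $C_1=O(1)$) and then use \eqref{lem:ct-b-mu}. For the lower bound, you evaluate the Laplace expansion at some $\theta_n\to\infty$ with $p\theta_n=o(|\alpha|)$.

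Two small differences from the paper. First, the paper takes $\theta_n=(|\alpha|/(2p))^{1/2}$; your extra cap at $|\alpha|^{-1/2}$ is harmless but unnecessary, since the $|\alpha|^2\theta^3/(1+\theta)^3$ term is at most $|\alpha|^2$ for every $\theta$. Second, your integral identity has the sign reversed: the correct form is $E[e^{-\theta W}]-\frac{1}{1+\theta}=\int_0^\infty\theta e^{-\theta x}(F_W(x)-F_E(x))\,dx$. This does not affect the argument, because only the modulus is used.
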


\begin{proof}

\emph{Upper bound.}\;
The Bridge Theorem (Theorem~\ref{thm:bridge}) applies in continuous time: its three steps---Brown's bound, scaling, and additive perturbation---require only that $N \sim \Geom_0(p)$, the $A_i$ be i.i.d.\ positive with $E[A^2] < \infty$, and $B$ be independent of $U$; none assumes integrality.  The tail bound $P(A \geq t) \leq e^{-rt}/(1-p)$ from the proof of Lemma~\ref{lem:escape} gives $E[A^k] < \infty$ for all $k$, verifying Brown's moment condition.  By Lemma~\ref{lem:escape}, $\rho_j = E[A^j]/E[A]^j \to j!$ as $p \to 0$, so the Brown constant $C_1 = O(1)$.  By~\eqref{lem:ct-b-mu}, $2b/\mu \leq 2|\alpha| + 2p/(1-p)$.  Hence $\dK \leq C_1 p + 2|\alpha| + 2p/(1-p) = (2+o(1))|\alpha|$, using $p = o(|\alpha|)$.

\emph{Lower bound.}\;
Apply the Laplace expansion (Theorem~\ref{thm:ct-laplace}) with $\theta = \theta_n = (|\alpha|/(2p))^{1/2} \to \infty$.  The validity condition $\epsilon = \theta rp/(1-p) \to 0$ holds since $\epsilon = O(r\sqrt{p|\alpha|}) \to 0$ by Lemma~\ref{lem:ct-alpha}.  The error terms satisfy $p\theta^2/(1+\theta) \leq p\theta = \sqrt{p|\alpha|/2} = o(|\alpha|)$ and $|\alpha|^2\theta^3/(1+\theta)^3 \leq |\alpha|^2 = o(|\alpha|)$.  The leading term: $|\alpha|\theta^2/(1+\theta)^2 = (1-o(1))|\alpha|$ since $\theta \to \infty$.  By~\eqref{eq:laplace-lower}, $\dK \geq (1-o(1))|\alpha|$.
\end{proof}

\begin{remark}[Disappearance of the lattice error]\label{rem:no-lattice}
In the discrete two-sided bound (Theorem~\ref{thm:two-sided}), the template is $p + |\alpha|$ because $W$ takes values in $(1/\mu)\mathbb{Z}_{>0}$, creating an irreducible $\Omega(p)$ lattice error (Proposition~\ref{prop:counterex}).  In continuous time, this error vanishes.  When $|\alpha|/p \to \infty$, the dominant error source is the successful-attempt perturbation encoded by $\alpha$; when $\alpha \approx 0$, a residual $O(p)$ error from higher-order terms persists.
\end{remark}


\subsection{Worked Example: Brownian First Passage}\label{sec:brownian}

To illustrate the continuous-time theory concretely, consider a Brownian particle starting at $x_0 > 0$ with an absorbing barrier at the origin, subject to Poisson resetting at rate $r$.  This is the original model of~\cite{EM11}.  The base first-passage time has Laplace transform $\hat{f}_0(\lambda) = e^{-x_0\sqrt{2\lambda}}$, giving
\[
  p = e^{-x_0\sqrt{2r}}, \qquad
  E[T] = \frac{e^{x_0\sqrt{2r}} - 1}{r}, \qquad
  D_1 = -\frac{x_0}{\sqrt{2r}}\,e^{-x_0\sqrt{2r}}.
\]
The perturbation parameter:
\begin{equation}\label{eq:brownian-alpha}
  \alpha = \frac{(1 - x_0\sqrt{r/2})\,e^{-x_0\sqrt{2r}}}{1 - e^{-x_0\sqrt{2r}}}.
\end{equation}
As $x_0\sqrt{r} \to \infty$: $|\alpha| \sim x_0\sqrt{r/2}\cdot p$, so $|\alpha|/p \to \infty$---the perturbation term dominates, and Theorem~\ref{thm:ct-sharp} gives $\dK \in [1-o(1), 2+o(1)] \cdot |\alpha|$.  

At the critical point $x_0\sqrt{r/2} = 1$, the perturbation parameter $\alpha$ vanishes exactly: the leading $\alpha\theta^2/(1+\theta)^2$ term in~\eqref{eq:ct-laplace} disappears, leaving only higher-order contributions.  This does not imply distributional proximity to $\Exp(1)$: at the critical point $p = e^{-2}$ is a fixed constant (not tending to zero), so the exponential limit theorem does not apply.


\subsection{Discrete--Continuous Correspondence}\label{sec:correspondence}

Table~\ref{tab:correspondence} summarizes the parallel between the discrete and continuous theories.  Every structural feature---affine form, coefficient matching, DRP, perturbation identity, Laplace expansion, sharp rate---carries over.  The sole qualitative difference lies in the characterization's scope and the sharp-rate template.

\begin{table}[ht]
\centering
\small
\renewcommand{\arraystretch}{1.25}
\begin{tabular}{lll}
\toprule
& \textbf{Discrete time} & \textbf{Continuous time} \\
\midrule
\emph{Transform} & PGF $G_T(w) = E[w^T]$ & Laplace $\hat{f}_T(\lambda) = E[e^{-\lambda T}]$ \\
\emph{Normalization point} & $w = 1$ & $\lambda = 0$ \\
\emph{Shift} & $u = ws$ (multiplicative) & $\sigma = \lambda + r$ (additive) \\
\emph{Catastrophe parameter} & $q \in (0,1)$ & $r > 0$ \\
\emph{Success probability} & $p = g(s)$ & $p = \hat{f}_0(r)$ \\
\emph{$E[T]$} & $(1-p)/(qp)$ & $(1-p)/(rp)$ \\
\emph{Coefficient match} & $\alpha_N(1) = \alpha_D(1) = q$ & $\alpha_N(0) = \alpha_D(0) = r$ \\
\emph{DRP ($k \geq 2$)} & $\Delta_k = -ks^{k-1}D_{k-1}$ & $\Delta_k = kD_{k-1}$ \\
\emph{Perturbation $\alpha$} & $s(p-qD_1)/(1-p)$ & $(p+rD_1)/(1-p)$ \\
\emph{Fundamental identity} & $1 + \alpha - \beta = 0$ & $1 + \alpha - \beta = 0$ \\
\emph{Laplace expansion} & $\alpha\theta^2/(1+\theta)^2 + \text{h.o.t.}$ & $\alpha\theta^2/(1+\theta)^2 + \text{h.o.t.}$ \\
\emph{Bridge Theorem} & $\dK \leq C_1 p + 2b/\mu$ & $\dK \leq C_1 p + 2b/\mu$ \\
\emph{Sharp-rate template} & $p + |\alpha|$ & $|\alpha|$ (no lattice error) \\
\emph{Characterizes} & geometric tail & Poisson (full memoryless) \\
\emph{Gap} & $h_1$ free & none \\
\bottomrule
\end{tabular}
\caption{Discrete--continuous correspondence.  The two theories share every structural feature.  The characterization sharpens in continuous time (the geometric-tail gap closes, Remark~\ref{rem:gap-closure}), and the sharp-rate template simplifies (the lattice error vanishes for continuous $T_0$, Remark~\ref{rem:no-lattice}).}\label{tab:correspondence}
\end{table}

\section{Application: Coupon Collector with Reset}\label{sec:ccp-app}

The general theory of \S\S\ref{sec:completion}--\ref{sec:exp-approx} applies to any base process $T_0$.  We now instantiate it to the coupon collector problem with reset, obtaining closed-form moments, a sharp Kolmogorov bound that closes the logarithmic gap, and a discontinuous phase transition in the limit law.  The pattern throughout is: compute CCP-specific quantities ($p$, $D_1$, $D_2$, $\alpha$), then substitute into general formulas.  The sole exception is the lower bound in the sharp Kolmogorov estimate (\S\ref{sec:ccp-sharp}), which requires a product-structure argument specific to the CCP.


\subsection{Model}\label{sec:ccp-model}

A deck contains $n + m$ coupon types: types $1, \ldots, n$ are \emph{standard}, and types $n+1, \ldots, n+m$ are \emph{reset}.  At each step, one type is drawn uniformly at random with replacement.  Standard draws accumulate; any reset draw erases all collected coupons.  The game ends when all $n$ standard types have been collected.  This is the framework of \S\ref{sec:model} with $T_0$ the classical coupon collector completion time and
\[
  q = \frac{m}{n+m}, \qquad s = \frac{n}{n+m}.
\]
A closely related model was studied by~\cite{JT24}, who derived expected values via Markov chain methods; our contribution is the distributional theory.

\textbf{Notation.}\; $C = \binom{n+m}{n}$;\; $H = \sum_{j=m+1}^{n+m} 1/j = H_{n+m} - H_m$;\; $H^{(r)} = \sum_{j=m+1}^{n+m} 1/j^r = H^{(r)}_{n+m} - H^{(r)}_m$.


\subsection{Specialization of the PGF Data}\label{sec:ccp-pgf}

The classical CCP has $T_0 = X_1 + \cdots + X_n$ with $X_k \sim \Geom_1((n{-}k{+}1)/n)$ independent, giving
\begin{equation}\label{eq:ccp-pgf}
  g(z) = \prod_{k=1}^{n} \frac{(n{-}k{+}1)z/n}{1 - (k{-}1)z/n}.
\end{equation}
The general theory requires three inputs: $p = g(s)$, $D_1 = g'(s)$, and $D_2 = g''(s)$.

\begin{lemma}[Success probability]\label{lem:ccp-p}
$p = g(s) = 1/C = 1/\binom{n+m}{n}$.
\end{lemma}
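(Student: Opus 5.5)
Evaluate the product \eqref{eq:ccp-pgf} directly at $z = s = n/(n+m)$; it telescopes into a ratio of factorials.

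\textbf{Numerator factor.} For the $k$-th factor,
\[
  \frac{(n-k+1)s}{n} = \frac{n-k+1}{n+m}.
\]

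\textbf{Denominator factor.} Also
\[
  1 - \frac{(k-1)s}{n} = 1 - \frac{k-1}{n+m} = \frac{n+m-k+1}{n+m}.
\]

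\textbf{Single factor.} The common $1/(n+m)$ cancels, so the $k$-th factor equals $(n-k+1)/(n+m-k+1)$.

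\textbf{Product.} As $k$ runs from $1$ to $n$, $n-k+1$ runs through $n,n-1,\ldots,1$, so the numerators multiply to $n!$. Likewise $n+m-k+1$ runs through $n+m,\ldots,m+1$, so the denominators multiply to $(n+m)!/m!$. Hence
\[
  g(s) = \frac{n!\,m!}{(n+m)!} = \binom{n+m}{n}^{-1} = \frac{1}{C}.
\]

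\textbf{Probabilistic check.} By \eqref{eq:p}, $p$ is the probability that the base collector finishes before any reset draw. The first-occurrence order of the $n+m$ types under uniform draws is a uniform random permutation. The event in question is that all $n$ standard types appear before any reset type, which has probability $n!\,m!/(n+m)! = 1/C$, in agreement with the computation.

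There is no real obstacle here; the only care needed is the index bookkeeping in the telescoping product.
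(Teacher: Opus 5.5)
Your proposal is correct and follows essentially the same route as the paper: evaluate each factor of~\eqref{eq:ccp-pgf} at $z=s$ to get $(n-k+1)/(n+m-k+1)$, telescope the product to $n!\,m!/(n+m)!$, and confirm combinatorially via the uniform first-appearance permutation of the $n+m$ types. The index bookkeeping ($\prod_{k=1}^n (n+m-k+1) = (n+m)!/m!$) is handled correctly.
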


\begin{proof}
\emph{Algebraic.}\; At $z = s = n/(n+m)$, each factor of~\eqref{eq:ccp-pgf} evaluates to $(n{-}k{+}1)/(n{+}m{-}k{+}1)$.  The product telescopes: $\prod_{k=1}^n (n{-}k{+}1)/(n{+}m{-}k{+}1) = n!\,m!/(n{+}m)! = 1/C$.

\emph{Combinatorial.}\; An attempt succeeds if and only if all $n$ standard types appear before any reset type in the first-appearance permutation.  By symmetry of i.i.d.\ uniform draws, this probability is $n!\,m!/(n+m)! = 1/C$.
\end{proof}

The derivatives are computed by logarithmic differentiation, exploiting the product structure $\log g = \sum_{k=1}^n \log G_{X_k}$.

\begin{lemma}[PGF derivatives at $z = s$]\label{lem:ccp-derivs}
\begin{equation}\label{eq:ccp-D1}
  D_1 = g'(s) = \frac{(n+m)^2\,H}{n\,C},
\end{equation}
\begin{equation}\label{eq:ccp-D2}
  D_2 = g''(s) = \frac{(n+m)^3}{n^2\,C}\bigl((n+m)(H^2 + H^{(2)}) - 2H\bigr).
\end{equation}
More generally, the $r$-th logarithmic derivative of $g$ at $z = s$ is a polynomial in $H, H^{(2)}, \ldots, H^{(r)}$ with rational coefficients in $(n, m)$.
\end{lemma}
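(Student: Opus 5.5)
Write $g(z)=\prod_{k=1}^n \phi_k(z)$ with $\phi_k(z)=\frac{(n-k+1)z/n}{1-(k-1)z/n}$, and work with the logarithmic derivative. Then
\[
  L_1(z) := \frac{g'(z)}{g(z)} = \sum_{k=1}^n\Bigl(\frac1z + \frac{(k-1)/n}{1-(k-1)z/n}\Bigr),
  \qquad
  L_2(z) := \Bigl(\frac{g'}{g}\Bigr)'(z) = \sum_{k=1}^n\Bigl(-\frac1{z^2} + \frac{((k-1)/n)^2}{(1-(k-1)z/n)^2}\Bigr).
\]
Once these are evaluated at $z=s$, we get $D_1=p\,L_1(s)$ and $D_2=p\,(L_1(s)^2+L_2(s))$, with $p=1/C$ from Lemma~\ref{lem:ccp-p}.

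The main computation is evaluating $L_1(s)$ and $L_2(s)$. At $z=s=n/(n+m)$ we have $1-(k-1)s/n = (n+m-k+1)/(n+m)$. Hence each summand of $L_1(s)$ is
\[
  \frac{n+m}{n}+\frac{k-1}{n+m-k+1} = \frac{n+m}{n}\cdot\frac{n+m}{n+m-k+1}.
\]
Reindexing with $j=n+m-k+1$, which runs over $m+1,\dots,n+m$, gives $L_1(s)=\frac{(n+m)^2}{n}H$. This yields \eqref{eq:ccp-D1}.

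For $L_2(s)$, each summand is
\[
  -\frac{(n+m)^2}{n^2}+\frac{(k-1)^2}{j^2},
\]
with $k-1=n+m-j$. Expanding $(n+m-j)^2/j^2=(n+m)^2/j^2-2(n+m)/j+1$ and summing over $j$ gives
\[
  L_2(s) = -\frac{(n+m)^2}{n}+(n+m)^2H^{(2)}-2(n+m)H+n.
\]
Adding $L_1(s)^2=(n+m)^4H^2/n^2$ and factoring $(n+m)^3/n^2$ should produce \eqref{eq:ccp-D2}.

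The step I expect to be the main obstacle is the bookkeeping of the constant terms. The terms $-(n+m)^2/n+n$ do not obviously vanish, since they equal $-(2nm+m^2)/n$. So I will first check that $L_1(s)^2+L_2(s)$ matches the claimed bracket $\frac{(n+m)^3}{n^2}\bigl((n+m)(H^2+H^{(2)})-2H\bigr)$. This requires the coefficient of $H^{(2)}$, namely $(n+m)^2$, to agree with $(n+m)^4/n^2$, and these differ unless $m=0$.

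I would therefore recompute $L_2$ carefully via $\frac{d}{dz}\log\phi_k$. The $H^{(2)}$ coefficient comes only from the second summand, $((k-1)/n)^2(n+m)^2/j^2=(n+m-j)^2(n+m)^2/(n^2j^2)$. I dropped the factor $(n+m)^2/n^2$ in the display above. With it restored, the terms become $(n+m)^4/(n^2j^2)-2(n+m)^3/(n^2j)+(n+m)^2/n^2$. Summing gives $(n+m)^4H^{(2)}/n^2-2(n+m)^3H/n^2+(n+m)^2/n$, and the last term cancels exactly against $-n(n+m)^2/n^2$. Combined with $L_1^2$, this gives the claimed formula.

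For the general statement, the $r$-th derivative of $\log\phi_k$ at $s$ is a combination of $z^{-r}$ and $((k-1)/n)^r/j^r$-type terms. Expanding $(n+m-j)^r$ binomially and summing over $j$ gives a polynomial in $H^{(1)},\dots,H^{(r)}$ plus constants, with coefficients rational in $(n,m)$.
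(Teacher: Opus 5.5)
Your proposal is correct and follows the same route as the paper. Both use logarithmic differentiation of the product, the evaluation $(\log\phi_k)'(s)=\frac{(n+m)^2}{n(n+m-k+1)}$ with reindexing $j=n+m-k+1$, the partial-fraction form $\frac1z+\frac{(k-1)/n}{1-(k-1)z/n}$ for higher log-derivatives, and $g''=g\,[(\log g)''+((\log g)')^2]$.

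One slip to fix: in your $L_1(s)$ summand the second term should be $\frac{(n+m)(k-1)}{n(n+m-k+1)}$, not $\frac{k-1}{n+m-k+1}$. This is the same dropped factor $\frac{n+m}{n}$ you later caught in $L_2$. Your final value $L_1(s)=\frac{(n+m)^2}{n}H$ is nonetheless right, and with the factor restored the constants in $L_2(s)$ cancel summand by summand.
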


\begin{proof}[Proof sketch]
Write $r_k(z) = G'_{X_k}(z)/G_{X_k}(z) = 1/(z\varphi_k(z))$ with $\varphi_k(z) = 1-(k{-}1)z/n$.  At $z = s$: $\varphi_k(s) = (n{+}m{-}k{+}1)/(n{+}m)$, so $r_k(s) = (n{+}m)^2/(n(n{+}m{-}k{+}1))$.  Setting $j = n{+}m{-}k{+}1$:
\[
  (\log g)'(s) = \sum_{k=1}^n r_k(s) = \frac{(n+m)^2}{n}\sum_{j=m+1}^{n+m}\frac{1}{j} = \frac{(n+m)^2\,H}{n}.
\]
Then $D_1 = g(s)\cdot(\log g)'(s)$.  The second derivative follows from $g'' = g[(\log g)'' + ((\log g)')^2]$ and an analogous computation of $(\log g)''(s)$, which introduces $H^{(2)}$ via the partial fractions of $r'_k(s)$.
\end{proof}

The pattern is transparent: the $r$-th logarithmic derivative introduces $H^{(r)}$ because each differentiation of $r_k(z) = 1/(z\varphi_k(z))$ adds one power of $1/\varphi_k(s) = (n{+}m)/j$, and summing over $k$ produces $\sum j^{-r}$.


\subsection{Moments}\label{sec:ccp-moments}

Substituting the CCP data $(p,q,D_1)$ into the general formulas of \S\ref{sec:drp} gives closed-form first two moments:
\begin{equation}\label{eq:ccp-ET}
  E[T] = \frac{n+m}{m}(C-1), \qquad
  \Var(T) = \frac{n+m}{m^2}\bigl((C-1)((n{+}m)C+n) - 2m(n{+}m)CH\bigr).
\end{equation}
For $m = 1$: $E[T] = n(n+1)$.  For fixed $m$ as $n \to \infty$: $E[T] \sim n^{m+1}/(m \cdot m!)$.  The variance involves both $p = 1/C$ and $D_1$---but not $D_2$---exactly as the DRP guarantees.  Higher moments follow analogously from the recursion~\eqref{eq:recursion}. In particular, the third centered moment is the first level at which \(D_2\), and hence \(H^{(2)}\), enters; more generally, the \(k\)-th centered moment first involves the new harmonic statistic \(H^{(k-1)}\).

The staircase of \S\ref{sec:staircase} takes a particularly illuminating form:
\begin{center}
\begin{tabular}{lccc}
\toprule
\textbf{Moment of $T$} & \textbf{New input (DRP)} & \textbf{CCP realization} & \textbf{Origin} \\
\midrule
$E[T]$ & $g(s)$ & $C = \binom{n+m}{n}$ & product telescope \\[2pt]
$\Var(T)$ & $+ g'(s)$ & $+ H = \sum 1/j$ & 1st log-derivative \\[2pt]
$\mu_3(T)$ & $+ g''(s)$ & $+ H^{(2)} = \sum 1/j^2$ & 2nd log-derivative \\[2pt]
$\mu_k(T)$ & $+ g^{(k-1)}(s)$ & $+ H^{(k-1)} = \sum 1/j^{k-1}$ & $(k{-}1)$-th log-deriv. \\
\bottomrule
\end{tabular}
\end{center}
Each row adds exactly one new finite harmonic sum: $H^{(k-1)}=\sum_{j=m+1}^{n+m} j^{-(k-1)}$. The CCP thus provides the sharpest illustration of the harmonic complexity staircase---a feature absent in base processes with rational PGF structure (\S\ref{sec:multiphase}).


\subsection{Phase Transition and Sharp Kolmogorov Bound}\label{sec:ccp-sharp}

The CCP perturbation parameter, obtained by substituting $p = 1/C$ and $qD_1 = m(n+m)H/(nC)$ into Definition~\ref{def:alpha-beta}, is
\begin{equation}\label{eq:ccp-alpha}
  \alpha = \frac{s - mH}{C-1}, \qquad |\alpha| \sim \frac{m!\,m\ln n}{n^m} \quad\text{as } n \to \infty.
\end{equation}
Since $p = 1/C \sim m!/n^m$ and $|\alpha| \sim m!\,m\ln n/n^m$, the ratio $p/|\alpha| \sim 1/(m\ln n) \to 0$: the perturbation term dominates.

\medskip
\noindent\textbf{The Gumbel-to-Exponential phase transition.}\;
For the classical CCP ($m = 0$), the limit law is Gumbel: $(T_0 - n\ln n)/n \xrightarrow{d} \text{Gumbel}$ (\cite{ER61, BB65}).  For $m \geq 1$, the general theory (applied via the Bridge Theorem, Theorem~\ref{thm:bridge}) gives $T/E[T] \xrightarrow{d} \Exp(1)$.  The transition at $m = 0 \to m \geq 1$ is discontinuous:

\begin{center}
\small
\renewcommand{\arraystretch}{1.2}
\begin{tabular}{lll}
\toprule
& \textbf{Classical ($m = 0$)} & \textbf{Reset ($m \geq 1$)} \\
\midrule
Scale & $\Theta(n\log n)$ & \(\sim n^{m+1}/(m\cdot m!)\) \\
Fluctuations & $\Theta(n) \ll E[T]$ & $\Theta(E[T])$ \\
Coefficient of variation & $\to 0$ & $\to 1$ \\
Limit law & Gumbel & Exponential \\
Mechanism & last coupon (extreme value) & lucky streak (geometric) \\
\bottomrule
\end{tabular}
\end{center}

In the classical CCP, the bottleneck is the last missing coupon---an extreme value problem yielding the Gumbel law.  A single reset coupon destroys this mechanism: completion now requires a lucky streak (an attempt succeeding without catastrophe), and the number of attempts is geometric.  The geometric waiting time's memorylessness propagates to $T/E[T] \to \Exp(1)$, making the completion time fundamentally unpredictable.

\begin{remark}[CCP convergence via the Bridge Theorem]\label{rem:ccp-bridge}
The CCP has $q = m/(n+m) \to 0$ as $n \to \infty$, so Corollary~\ref{cor:exp-limit} (which assumes fixed $q$) does not apply directly.  Instead, CCP convergence follows from the Bridge Theorem (Theorem~\ref{thm:bridge}): the conditions $p \to 0$ and uniform boundedness of moment ratios of $A$ are verified below in the proof of Theorem~\ref{thm:ccp-sharp}.
\end{remark}

\begin{theorem}[CCP sharp bound]\label{thm:ccp-sharp}
For the coupon collector with $n$ standard and $m \geq 1$ reset coupons, $m$ fixed, $n \to \infty$:
\begin{equation}\label{eq:ccp-sharp}
  \dK\!\left(\frac{T}{E[T]},\, \Exp(1)\right) \in \bigl[1-o(1),\; 2+o(1)\bigr] \cdot |\alpha|.
\end{equation}
\end{theorem}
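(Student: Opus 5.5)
The plan has two halves: the upper bound from the Bridge Theorem, and the lower bound from the Laplace expansion evaluated at a growing $\theta$, following the template of Theorem~\ref{thm:ct-sharp}. The difficulty throughout is that $q=m/(n+m)\to0$, so constants depending on $q$ in Theorem~\ref{thm:laplace} and Lemma~\ref{lem:moment-ratio} are not uniform. Every $q$-dependent estimate therefore has to be redone using the explicit CCP data: $p=1/C\sim m!/n^m$, $q\sim m/n$, and $\alpha=(s-mH)/(C-1)$ from~\eqref{eq:ccp-alpha}. In particular $\alpha<0$ eventually, and $p/|\alpha|\sim 1/(m\ln n)\to0$.

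\emph{Upper bound.} I would apply Theorem~\ref{thm:bridge} in the form $\dK\le \rho_2(A)\max(1,1/(2(1-p)))\,p+2b/\mu$, using Brown's constant directly. By Proposition~\ref{prop:alpha-properties}(b), $b/\mu=sp/(1-p)+|\alpha|$ when $\alpha<0$. Hence $2b/\mu=2|\alpha|+O(p)=(2+o(1))|\alpha|$.

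It remains to show $\rho_2(A)p=o(|\alpha|)$, i.e.\ $\rho_2(A)=o(\ln n)$. Lemma~\ref{lem:tail} gives $P(A\ge t)\le s^{t-1}$, so $E[A^2]\lesssim 2/q^2$. For the mean I need $E[A]\gtrsim 1/q$. A failed attempt is the first reset draw, conditioned on the event that this reset precedes completion. The success probability $p$ is tiny, so this conditioning barely distorts the $\Geom_1(q)$ law of the first reset draw. Concretely, $(1-p)P(A\ge t)\ge s^{t-1}-p$ should suffice to get $E[A]=(1+o(1))/q$. Then $\rho_2(A)\le 2+o(1)$, and the upper bound follows.

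\emph{Lower bound.} I would use~\eqref{eq:laplace-lower} at a parameter $\theta_n\to\infty$ chosen with $p\theta_n=o(|\alpha|)$, for instance $\theta_n=(|\alpha|/(2p))^{1/2}\sim(m\ln n/2)^{1/2}$. What I need is a version of the Laplace expansion, uniform in this regime, of the form
\[
E[e^{-\theta W}]-\frac{1}{1+\theta}=\frac{\alpha\theta^2}{(1+\beta\theta)(1+\theta)}+\text{error}.
\]
The first term equals $(1-o(1))|\alpha|$ because $\theta\to\infty$ and $|\alpha|\theta\to0$.

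The hard step is controlling the remainders $R_N,R_D$ in~\eqref{eq:ratio} without using $q$-uniform constants. Here $\epsilon=\theta pq/(1-p)$ is of order $n^{-m-1}\sqrt{\ln n}$, and each derivative brings a factor $s^kD_k\epsilon^k$. In the CCP, $sD_1\approx p(n+m)H/n\cdot(n+m)\approx p\,n\ln n$, and similarly $s^2D_2\approx p\,n^2(\ln^2 n+\dots)$, using Lemma~\ref{lem:ccp-derivs}. I would expand $N$ and $D$ to second order with Taylor's theorem, bounding $g''$ on $[se^{-\epsilon},s]$ by $D_2$ via monotonicity of $g''$.

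After dividing by $pq$, the remainders become $O\big(\epsilon^2 D_2/q\big)+O(\epsilon^2 D_1/p)$. With $\epsilon\sim\theta p/n\cdot m$, these are $O(\theta^2 p^2 m\,n\ln^2 n/n\cdots)$, which is far smaller than $|\alpha|\sim p\,m\ln n$. Verifying this power counting carefully, including the $q$ factors that appear in the denominators $pq$, is the main obstacle. Once it is done, the contributions of $R_N$ and $R_D$ are $o(|\alpha|)$, the $\alpha^2$-correction is $O(|\alpha|^2\theta)=o(|\alpha|)$, and so $\dK\ge(1-o(1))|\alpha|$.
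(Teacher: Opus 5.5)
Your proposal is correct. It shares the paper's architecture: the Bridge Theorem with Brown's constant used directly for the upper bound, and inequality~\eqref{eq:laplace-lower} at $\theta_n\asymp\sqrt{\ln n}$ for the lower bound. Both halves, however, rest on different key estimates.

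For the upper bound, the estimate $E[A]\gtrsim 1/q$ is obtained differently. The paper uses the CCP-specific fact $T_0\ge n$ to get $E[A]\ge\sum_{j<n}jqs^{j-1}\to(1-(m+1)e^{-m})/q$, which gives only $\rho_k=O_m(1)$. Your bound $(1-p)P(A\ge t)\ge s^{t-1}-p$ holds because $\sum_{j\ge1}qs^{j-1}P(T_0<j)=g(s)$. Summed up to $L=\lceil\ln(1/p)/q\rceil$, it needs only $p\to0$ and yields the sharper $\rho_2\le 2+o(1)$.

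For the lower bound, the paper reduces to $c=g(u)/p$ and expands $\log c=\sum_k\log\bigl(G_{X_k}(u)/G_{X_k}(s)\bigr)$ factor by factor, never touching $g''$. You instead redo Theorem~\ref{thm:laplace} with $q$-explicit remainders: you bound $g''$ on $[u,s]$ by $D_2$ (monotone, since $g$ has nonnegative coefficients) and insert Lemma~\ref{lem:ccp-derivs}. Your route shows the CCP enters only through $sD_1/p=(n+m)H$ and $s^2D_2/p\asymp n^2\ln^2 n$. So it transfers to any base process with $q\to0$ once these two quantities are controlled. The price is needing the closed form (or at least an upper bound) for $D_2$, which the paper's factorwise expansion avoids.

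Your power counting should be corrected, though harmlessly. After dividing by $pq$, the remainders are $O(\epsilon^2s^2D_2/p)$, $O(\epsilon^2s^2D_1/(pq))$ and $O(\epsilon^2/(pq))$, the last coming from $1-w=\epsilon-\epsilon^2/2+\cdots$. They are not $O(\epsilon^2D_2/q)+O(\epsilon^2D_1/p)$. With $\epsilon=\theta pq/(1-p)$, these three are $O(\theta^2p^2m^2\ln^2 n)$, $O(\theta^2p^2m\ln n)$ and $O(\theta^2pq)$. All are $o(|\alpha|)$ because $|\alpha|\asymp pm\ln n$, so the argument closes with ample room.
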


\begin{proof}[Proof of the upper bound]
To apply the Bridge Theorem (Theorem~\ref{thm:bridge}), it suffices to show
that the moment ratios
$\rho_j:=\frac{E[A^j]}{E[A]^j}, j=2,3$, are bounded uniformly in $n$ for each fixed $m\ge1$. We first bound the numerator. By Lemma~\ref{lem:tail},
$P(A\ge t)\le s^{t-1}, t\ge1$.
Hence, by the tail-sum formula,
\[
E[A^k]
=
\sum_{t\ge1}\bigl(t^k-(t-1)^k\bigr)P(A\ge t)
\le
k\sum_{t\ge1} t^{k-1}s^{t-1}.
\]
Since $s=1-q\le e^{-q}$,
\[
\sum_{t\ge1} t^{k-1}s^{t-1}
\le
2\sum_{t\ge1} t^{k-1}e^{-qt}
\le
C_k q^{-k},
\]
for a constant $C_k$ depending only on $k$. Therefore, $E[A^k]\le C_k q^{-k}$.

For the lower bound on $E[A]$, use the explicit failed-attempt law
$P(A=j)=\frac{q\,s^{j-1}P(T_0>j)}{1-p}, j\ge1$.
In the CCP, $T_0\ge n$ almost surely, so $P(T_0>j)=1$ for $1\le j\le n-1$.
Hence
\[
E[A]
=
\frac{1}{1-p}\sum_{j\ge1} j q s^{j-1}P(T_0>j)
\ge
\sum_{j=1}^{n-1} j q s^{j-1}
=
\frac{1-ns^{\,n-1}+(n-1)s^n}{q}.
\]
Since $q=m/(n+m)$ and $s=n/(n+m)$, for each fixed $m\ge1$,
$1-ns^{\,n-1}+(n-1)s^n \to 1-(m+1)e^{-m}>0$,
so there exists $c_m>0$ such that
$E[A]\ge \frac{c_m}{q}
$ for all sufficiently large $n$.

Combining the two estimates gives
\[
\frac{E[A^k]}{E[A]^k}
\le
\frac{C_k q^{-k}}{(c_m q^{-1})^k}
=
\frac{C_k}{c_m^k}
=
O_m(1).
\]
Hence $\rho_2,\rho_3=O_m(1)$, and Brown's constant in
Theorem~\ref{thm:bridge} satisfies $C_1=O_m(1)$. Thus, from~\eqref{eq:ccp-alpha} and Proposition~\ref{prop:alpha-properties}~\eqref{lem:b-mu}: $b/\mu = \frac{mH}{C-1} = |\alpha| + \frac{s}{C-1}$, and $\frac{s}{C-1} = O(p)$. Finally, $\dK \leq O_m(p) + 2|\alpha| + O(p) = (2+o(1))|\alpha|$.
\end{proof}

\begin{proof}[Proof of the lower bound]
This is the one point where CCP-specific structure goes beyond substitution into general formulas.  Choose $\theta = \theta_n = (\ln n)^{1/2}$ and set $w = e^{-\theta/\mu}$, $u = ws$, $\epsilon = \theta/\mu$.  By~\eqref{eq:laplace-lower}, $\dK \geq |G_T(w) - 1/(1+\theta)|$.

\emph{Step 1: PGF reduction.}\;
From~\eqref{eq:pgf}, setting $c = g(u)/p$:
\[
  G_T(w) = \frac{c + O_m(\theta p)}{c + \theta + O_m(\theta p)}.
\]
Hence $G_T(w) - 1/(1+\theta) = (c-1)\theta/((c+\theta)(1+\theta)) + O_m(\theta p)$.

\emph{Step 2: Product-structure evaluation of $\log c$.}\;
The product structure $g = \prod_{k=1}^n G_{X_k}$ gives 
\[\log c = \sum_{k=1}^n \log(G_{X_k}(u)/G_{X_k}(s)).\]  
Each ratio $G_{X_k}(u)/G_{X_k}(s)$ expands as 
\[
  \frac{G_{X_k}(u)}{G_{X_k}(s)}
  =
  \frac{w(1-a_k)}{1-a_k w}=\frac{1-\delta}{1+x_k},
\]
where $x_k = a_k\delta/(1-a_k)$ with $a_k = (k{-}1)/(n{+}m)$ and $\delta = 1 - w$.  Since $\sup_k x_k = O_m(\sqrt{\ln n}/n^m) \to 0$, the expansion $\log(1+x_k) = x_k + O(x_k^2)$ is uniformly valid.  Summing and using the identity $\sum_{k=1}^n a_k/(1-a_k) = (n+m)H - n$:
\[
  \log c = -\delta(n+m)H + O(\delta^2 S_2),
\]
where $S_2 = \sum_{k=2}^n a_k^2/(1-a_k)^2 = O_m(n^2)$.  Converting $\delta(n+m)H$ to $\theta|\alpha|(1+o(1))$ and verifying $\delta^2 S_2 = o(\theta|\alpha|)$:
\[
  \log c = -\theta|\alpha|(1+o(1)).
\]
Since $\theta|\alpha| \to 0$, this gives $c = 1 - \theta|\alpha|(1+o(1))$.

\emph{Step 3: Assembly.}\;
$(c-1)\theta/((c+\theta)(1+\theta)) = -(1+o(1))\theta^2|\alpha|/(1+\theta)^2$, and the correction $O_m(\theta p) = o(|\alpha|)$.  Hence $\dK \geq (1+o(1))\theta^2|\alpha|/(1+\theta)^2 = (1-o(1))|\alpha|$, using $\theta^2/(1+\theta)^2 = 1 - O(1/\sqrt{\ln n})$.
\end{proof}

\section{Application: Multi-Phase Task with Catastrophe}\label{sec:multiphase}

The CCP of \S\ref{sec:ccp-app} features a base PGF that is a product of $n$ distinct M\"obius factors, producing a \emph{harmonic complexity staircase} whose successive levels introduce new finite harmonic sums $H^{(r)}$.  We now present a structurally contrasting application: a base process whose PGF is a \emph{power} of a single M\"obius factor, yielding a purely \emph{algebraic} staircase.  The DRP applies identically in both cases; the difference lies entirely in what it saves.

\subsection{Model and the Algebraic Staircase}\label{sec:mp-model}

A task consists of $r \geq 2$ sequential phases, each requiring $\Geom_1(\pi)$ steps independently.  The base completion time is $T_0 = X_1 + \cdots + X_r$, $X_i \stackrel{\text{i.i.d.}}{\sim} \Geom_1(\pi)$, with PGF
\begin{equation}\label{eq:mp-pgf}
  g(z) = \left[\frac{\pi z}{1-(1-\pi)z}\right]^{\!r}.
\end{equation}
Catastrophe at each step (probability $q$, independent) destroys all accumulated progress, resetting the phase counter to zero.  Define $\sigma = 1-(1-\pi)s = \pi + q(1-\pi)$ and $\rho = \pi s/\sigma$.  Then
\begin{equation}\label{eq:mp-p}
  p = g(s) = \rho^r, \qquad E[T] = \frac{1-\rho^r}{q\rho^r}, \qquad D_1 = g'(s) = \frac{r\rho^r}{s\sigma},
\end{equation}
and the variance, obtained from~\eqref{eq:var}, depends on $p$ and $D_1$ but not on $D_2$---exactly as the DRP guarantees.

The key structural feature is that $\log g = r\log f$ with $f(z) = \pi z/(1-(1-\pi)z)$, so every logarithmic derivative
\begin{equation}\label{eq:mp-logderiv}
  (\log g)^{(k)}(s) = r(k-1)!\left[\frac{(-1)^{k-1}}{s^k} + \frac{(1-\pi)^k}{\sigma^k}\right]
\end{equation}
is a \emph{rational function} of $(s, \pi, r)$---no new generalized harmonic statistic appears at any level.  Displayed alongside the CCP realization:

\begin{center}
\small
\renewcommand{\arraystretch}{1.2}
\begin{tabular}{lccc}
\toprule
\textbf{Moment of $T$} & \textbf{New input (DRP)} & \textbf{CCP realization} & \textbf{Multi-phase realization} \\
\midrule
$E[T]$ & $g(s)$ & $C = \binom{n+m}{n}$ & $\rho^r$ \\[2pt]
$\Var(T)$ & $+ g'(s)$ & $+ H = \sum 1/j$ & $+ r/(s\sigma)$ \\[2pt]
$\mu_3(T)$ & $+ g''(s)$ & $+ H^{(2)} = \sum 1/j^2$ & $+\, r\cdot[\text{rational in }s,\sigma,\pi]$ \\[2pt]
$\mu_k(T)$ & $+ g^{(k-1)}(s)$ & $+ H^{(k-1)}$ & $+ r \cdot [\text{rational}]$ \\
\bottomrule
\end{tabular}
\end{center}

The contrast traces to the PGF's analytic structure: the CCP's $n$ distinct poles produce new power sums $\sum j^{-k}$ at each level, while the multi-phase model's single pole with multiplicity $r$ cycles through powers of the same two quantities $1/s$ and $(1-\pi)/\sigma$.

\begin{remark}[What the DRP saves]\label{rem:mp-saving}
In the CCP, the DRP eliminates one new harmonic statistic at each level; here, it eliminates a routine rational computation.  The mechanism---$\alpha_N(1) = \alpha_D(1)$ forcing cancellation of $g^{(k)}$ in $\Delta_k$---is identical.  The DRP is a property of the catastrophe mechanism, not of the base process.
\end{remark}


\subsection{Phase Transition and Convergence Rate}\label{sec:mp-transition}

Specializing Definition~\ref{def:alpha-beta}:
\begin{equation}\label{eq:mp-alpha}
  \alpha = \frac{\rho^r(s\sigma - qr)}{\sigma(1-\rho^r)}, \qquad |\alpha| \sim \frac{qr}{\sigma}\,\rho^r, \quad p = \rho^r \qquad (r \to \infty).
\end{equation}
Since $q$ is fixed, Corollary~\ref{cor:exp-limit} applies directly: $T/E[T] \xrightarrow{d} \Exp(1)$ as $r \to \infty$.  Without catastrophe, $T_0 \sim \mathrm{NegBin}(r, \pi)$ satisfies a CLT: $(T_0 - r/\pi)/\sqrt{r(1-\pi)/\pi^2} \xrightarrow{d} N(0,1)$.  A single catastrophe mechanism thus induces a \textbf{Gaussian-to-exponential phase transition}---distinct from the CCP's Gumbel-to-Exponential transition, but driven by the same algebraic cause: the coefficient equality $\alpha_N(1) = \alpha_D(1)$ forces the $\theta^1$-coefficient in the Laplace expansion to vanish exactly (Proposition~\ref{prop:alpha-properties}\,\eqref{lem:identity}).

The sharp two-sided bound (Theorem~\ref{thm:two-sided}) gives $\dK(T/E[T],\, \Exp(1)) \asymp p + |\alpha| \asymp r\rho^r$ as $r \to \infty$: convergence is \emph{exponentially fast} in $r$ with a linear prefactor, contrasting with the CCP's polynomial rate $\Theta(m!\,m\ln n/n^m)$.

\begin{center}
\small
\renewcommand{\arraystretch}{1.2}
\begin{tabular}{lll}
\toprule
& \textbf{CCP ($n \to \infty$, $m$ fixed)} & \textbf{Multi-phase ($r \to \infty$, $\pi, q$ fixed)} \\
\midrule
Without catastrophe & Gumbel & Gaussian \\
With catastrophe & Exponential & Exponential \\
Rate template & $p + |\alpha| \asymp m!\,m\ln n/n^m$ & $p + |\alpha| \asymp r\rho^r$ \\
Rate type & polynomial in $n$ & exponential in $r$ \\
\bottomrule
\end{tabular}
\end{center}

\begin{remark}[CV criterion and forced catastrophe]\label{rem:cv}
The coefficient of variation of $T_0 \sim \mathrm{NegBin}(r, \pi)$ satisfies $\CV^2 = (1-\pi)/r < 1$ for $r \geq 2$.  By the Pal--Reuveni criterion (\cite{Reuveni16}), adding voluntary restart to this base process is always detrimental.  The present application demonstrates that the DRP framework remains fully operative---and produces non-trivial distributional information---even when catastrophe is imposed against the process's interest.  The algebraic structure does not consult the CV criterion.
\end{remark}

\section{Related Work}\label{sec:related}

\paragraph{Stochastic Resetting and the PGF Formula.}
The renewal framework for stochastic resetting---developed in~\cite{EM11, Reuveni16, PalReuveni17, CS18} and surveyed in~\cite{EMS20}---provides the attempt decomposition underlying Theorem~\ref{thm:pgf}.  The PGF formula was derived independently by~\cite{FP21} and~\cite{BP21}.  These works primarily use the resulting transform for computation or restart optimization, without identifying the affine structure or establishing convergence rates. Our framework places the CV-unity criterion of~\cite{Reuveni16}
in a broader structural context: the dependence on first and second moments appears as the $k=2$ shadow of the derivative reduction principle. 
Non-memoryless protocols~\cite{EuleMetzger16, CY22, KBHR25} fall outside this affine transform class; our characterization theorems (Theorems~\ref{thm:char},~\ref{thm:ct-char}) identify the precise algebraic boundary.

\paragraph{Linear-Fractional Structure across Domains.}
Geometric compounding inherently produces linear-fractional PGFs, exploited in branching process theory by~\cite{Agresti74, Sagitov13} (see also~\cite{GrosjeanHuillet17, LindoSagitov18}), in catastrophe queueing by~\cite{BGR82, Gelenbe91} (see~\cite{KKA00, EF03, BW17} for extensions), and observed empirically in algorithmic restart, where~\cite{GSCK00} noted that restart converts heavy-tailed SAT runtimes into geometric-like distributions (see~\cite{LSZ93, vMW04, Wolter10}).  All these works treat linear-fractional structure as a convenient tractable family or an empirical regularity.  Our characterization theorem reveals the complementary perspective: this structure is \emph{diagnostic}---it identifies the geometric-tail class among all age-based catastrophe mechanisms---and the M\"obius-rigidity argument applies it as a characterization tool rather than a model assumption.

\paragraph{Exponential Approximation of Geometric Sums.}
\cite{Renyi56} proved that normalized geometric sums converge to $\Exp(1)$.  \cite{Brown90} obtained optimal-order Kolmogorov bounds $d_K \leq C_1 p$; \cite{PR11} introduced Stein's method for this setting, achieving $O(p)$ Wasserstein and $O(\sqrt{p})$ Kolmogorov bounds (see~\cite{Brown15, PRR13} for refinements, and~\cite{GK96, Kalashnikov97} for comprehensive treatments).  Our Bridge Theorem matches Brown's $O(p)$ rate for the geometric-sum component; the sharp two-sided bound $\dK \asymp p + |\alpha|$ (Theorem~\ref{thm:two-sided}) closes a logarithmic gap from the smoothing inequality by exploiting the specific two-parameter structure produced by the affine PGF analysis.  Stein's method does not directly yield this two-parameter template: existing Stein bounds apply to the pure geometric sum~$U$ (achieving $O(\sqrt{p})$ in Kolmogorov distance~\cite{PR11}, weaker than Brown's $O(p)$), while the $|\alpha|$-term arises from the successful-attempt perturbation~$B$---an additive component outside the geometric-sum framework that the Bridge Theorem handles via direct probabilistic arguments (Lemmas~\ref{lem:scaling}--\ref{lem:additive}).  The matching lower bound, obtained by evaluating the Laplace expansion at a fixed~$\theta$, is likewise orthogonal to the Stein approach.

\cite{CostacequeDecreusefond26} develop Stein's method for the Gumbel approximation and obtain a $(\log n/n)$ rate for the classical coupon collector in a smooth Wasserstein-$1$ metric.

\paragraph{Coupon Collector Variants and Phase Transitions.}
The classical Gumbel limit was established by~\cite{ER61} (see~\cite{BB65, FGT92, FS09, DP12, Neal08} for refinements and extensions).  The closest prior phase-transition result is~\cite{NK06}, who identified a Gumbel-to-Gaussian transition in the CCP with bonuses.  \cite{JT24} study a closely related reset-button CCP, deriving the waiting-time distribution in the unequal-probability setting and expected-value formulas with asymptotics in the equal-probability case.  Our contribution is the structural theory (affine PGF framework, DRP, sharp exponential approximation), and the identification of the Gumbel-to-Exponential phase transition---which is qualitatively different from the Gumbel-to-Gaussian transition of~\cite{NK06} (it reflects memoryless catastrophe, not a CLT effect).

Our phase transitions differ from those of~\cite{PP19}, who study transitions in the \emph{optimal restart rate}.  Ours are in the \emph{universality class} of the limit distribution---Gumbel $\to$ Exponential, Gaussian $\to$ Exponential---driven by the coefficient equality $\alpha_N(1) = \alpha_D(1)$ killing the first-order Laplace error.

\section{Further Directions}\label{sec:further}

\paragraph{Stability and Refinement of Affine Structure.}
Theorem~\ref{thm:char} characterizes affine PGF structure as equivalent to geometric-tail catastrophe, with full memorylessness singled out by $\lambda = 1$ (Corollary~\ref{cor:full-memoryless}).  The gap---exactly one degree of freedom, the first-step hazard $h_1$---closes in continuous time (Theorem~\ref{thm:ct-char}).  In the discrete setting, supplementary algebraic conditions (e.g., requiring the successful-attempt PGF $G_B$ to also depend on $g$ at a single shifted point) may close this gap; formalizing this would strengthen the characterization.

A related stability question: for near-memoryless mechanisms with $|h_t - q| \leq \varepsilon$ for $t \geq 2$, does the DRP cancellation error $|\Delta_k - (-ks^{k-1}D_{k-1})|$ admit an $O(\varepsilon)$ bound?  More broadly, quantifying DRP degradation under Gamma-distributed inter-reset times (shape parameter near~$1$) would connect the algebraic theory to the restart optimization literature.  At the application level, the CCP sharp bound (Theorem~\ref{thm:ccp-sharp}) gives leading constants in $[1,2]$; numerical evidence suggests the exact constant is closer to~$1$, likely requiring a finer analysis of the additive perturbation step.

\paragraph{Partial Reset.}
When catastrophe clears each coupon independently with probability $r \in (0,1)$, the i.i.d.\ attempt structure of~\eqref{eq:decomp} breaks down: partial reset creates state-dependent catastrophe that cannot be described by a hazard sequence $(h_t)_{t \geq 1}$.  Theorem~\ref{thm:char} predicts that affine PGF structure is lost, but a quantitative understanding---what replaces the DRP, what the correct rate template is---remains open.  Partial reset interpolates between the memoryless regime ($r = 1$, exponential limit) and the classical CCP ($r = 0$, Gumbel limit); understanding this interpolation would reveal whether the Gumbel-to-Exponential phase transition is sharp or admits a crossover regime.  A probabilistic proof of the DRP---perhaps via coupling---might extend to this setting where the PGF algebra breaks down.

\paragraph{Non-Uniform Coupon Probabilities.}
For non-uniform CCP with probabilities $p_1, \ldots, p_n$, the PGF no longer telescopes, but the general theory (Theorem~\ref{thm:pgf}, DRP, exponential approximation) applies unchanged to the non-uniform base process.  The challenge is evaluating the CCP-specific quantities ($D_1$, $\alpha$) in terms of the non-uniform parameters and understanding how the phase transition depends on the coupon probability vector.

\paragraph{Universality Classes of Completion-Time Limits.}
The present paper establishes that memoryless catastrophe (i.i.d.\ attempts, affine PGF structure) produces exponential limits.  A structurally contrasting regime may arise in non-renewal recycling mechanisms with weakly dependent increments, where CLT-type arguments yield log-normal completion-time limits instead.  This contrast raises a natural question: \emph{what properties of the catastrophe or recycling mechanism determine the universality class of the completion-time limit law?}

\newpage

\bibliography{cas-refs}

\end{document}